\allowdisplaybreaks \numberwithin{equation}{section}
\numberwithin{equation}{section}
\newtheorem{theorem}{Theorem}[section]
\newtheorem{proposition}[theorem]{Proposition}
\newtheorem{corollary}[theorem]{Corollary}
\newtheorem{lemma}[theorem]{Lemma}
\newtheorem*{Yudovich's Theorem}{Yudovich's Theorem}
\theoremstyle{definition}
\theoremstyle{remark}
\newtheorem{remark}[theorem]{Remark}
\begin{document}

\title
[An extension of Arnold's second stability theorem]{An extension of Arnold's second stability theorem in a multiply-connected domain}

 \author{Guodong Wang,  Bijun Zuo}
\address{Institute for Advanced Study in Mathematics, Harbin Institute of Technology, Harbin 150001, P.R. China}
\email{wangguodong@hit.edu.cn}
\address{College of Mathematical Sciences, Harbin Engineering University, Harbin {\rm150001}, PR China}
\email{bjzuo@amss.ac.cn}


\begin{abstract}
We give a sufficient condition for the  nonlinear stability of  steady flows of a two-dimensional ideal  fluid in  a  bounded multiply-connected domain, which generalizes a stability criterion  proved by Arnold in the 1960s. The most important ingredient of the proof is to establish a variational characterization for the steady flow under consideration, which is achieved based on the energy-Casimir method proposed by Arnold, and the supporting functional method introduced by Wolansky and Ghil.
Nonlinear stability then follows from a compactness argument related to the variational characterization and proper use of conserved quantities of the two-dimensional Euler equations.

\end{abstract}

\maketitle

\tableofcontents

\section{Introduction and main result}

In the 1960s, Arnold  \cite{A1,A2} proved two general theorems on the nonlinear stability of steady flows of a two-dimensional ideal fluid, now usually referred to as Arnold's first and second stability theorems in the literature.
Roughly speaking, Arnold's stability theorems  asserted that  a steady Euler flow is nonlinearly stable in a certain sense if its stream function $\bar\psi$ and its vorticity $\bar\omega$ satisfies
\begin{equation}\label{arnc1}
-c_{1}\leq \frac{\nabla \bar\omega}{\nabla\bar\psi}\leq -c_{2}
\end{equation}
for some positive constants $c_{1}$ and $c_{2}$ (the first stability theorem), or
\begin{equation}\label{arnc2}
c_{3}\leq \frac{\nabla \bar\omega}{\nabla\bar\psi}\leq c_{4}
\end{equation}
for some positive constants $c_{3}$ and $c_4$ with $c_4$ being sufficiently small (the second stability theorem). See also \cite{AK}, Chapter 2.  Our main purpose in this paper is to give an extension of the second stability theorem in the case of a multiply-connected bounded domain, showing that  the condition \eqref{arnc2} can be relaxed and the stability conclusion can be strengthened.

  \subsection{2D Euler equations in a multiply-connected domain}

We start with the following two-dimensional (2D) incompressible Euler equations
\begin{equation}\label{eul}
\begin{cases}
\partial_{t}\mathbf v+(\mathbf v\cdot\nabla)\mathbf v=-\nabla P,\\
\nabla\cdot\mathbf v=0,
\end{cases}
\end{equation}
 describing  the motion of an inviscid planar fluid with unit density.
Here $\mathbf v=(v_{1},v_{2})$ is the velocity field, and $P$ is the scalar pressure.
We assume that the fluid occupies a multiply-connected bounded  domain $D$, with the form
\begin{equation}\label{dform}
D=D_0\setminus \cup_{i=1}^{N}D_{i},\quad \bar D_{i}\subset D_{0},\,\,i=1,\cdot\cdot\cdot,N,
\end{equation}
where each $D_{i}$ is a simply-connected bounded domain of $\mathbb R^{2}$ with a smooth boundary $\Gamma_{i}$, and $\bar D_{i}\cap \bar D_{j}=\varnothing$ whenever $i\neq j$.
 Denote by $\mathbf n$ the outward (with respect to $D$) unit normal vector on $\partial D$. We impose the following boundary condition for \eqref{eul}:
\begin{equation}\label{bdry}
\mathbf v\cdot\mathbf n|_{\partial D}=0,\quad t\geq0.
\end{equation}

If we identify  $\mathbf v$ with the three-dimensional vector $(v_{1},v_{2},0)$, then   the corresponding vorticity vector curl$\mathbf v$ is $(0, 0, \partial_1v_2-\partial_2v_1)$. Define
 \begin{equation}\label{svord}
\omega=\partial_{1}v_{2}-\partial_{2}v_{1},
\end{equation}
called the \emph{scalar vorticity}.

For simplicity, we mainly consider \emph{classical solutions} of \eqref{eul} in this paper. By a classical solution, we mean a pair of functions  $(\mathbf v, P)$ such that  $\mathbf v\in C^{1}([0,+\infty)\times\bar D), $  $P(t,\cdot)\in C^{1}(\bar D) $ for any $t\geq 0,$
and $(\mathbf v,P)$ satisfies  the Euler system \eqref{eul} and the boundary condition \eqref{bdry}  pointwise.  Given a sufficiently regular initial velocity field $\mathbf v(0,\cdot)=\mathbf v_{0}$ with $\nabla\cdot \mathbf v_{0}=0,$  a classical solution always exists and is unique. See \cite{MB}, Chapter 3, or \cite{MPu}, Chapter 2.

For classical solutions, the following conservation properties hold, which play an important role in this paper.
\begin{itemize}
\item [(C1)] Conservation of kinetic energy:
\[\frac{1}{2}\int_{D}|\mathbf v(t,x)|^{2}dx=\frac{1}{2}\int_{D}|\mathbf v(0,x)|^{2}dx,\quad \forall\,t>0.\]
\item [(C2)] Conservation of circulation (also called Kelvin's circulation theorem):
 \[ \int_{\Gamma_{i}}\mathbf v(t,x)\cdot d\mathbf r=\int_{\Gamma_{i}}\mathbf v(0,x)\cdot d\mathbf r,\quad \forall\,t>0,\,\,i=0,1,\cdot\cdot\cdot,N.\]
Here and henceforth, $\mathbf r$ denotes the anticlockwise rotation of the outward  unit normal vector  $\mathbf n$  through an angle of $\pi/2$.
 \item [(C3)] Conservation of vorticity:
  \[ \omega(t,\cdot)\in\mathcal R_{\omega(0,\cdot)},\quad \forall\,t>0.\]
  Here, for some $w\in L^1_{\rm loc}(D)$,  $\mathcal R_{w}$ denotes the set of  rearrangements  of  $w$ on $D$, i.e.,
\begin{equation}\label{rwdef}
\mathcal R_{w}=\left\{v\in L^{1}_{\rm loc}(D)\mid |\{x\in D\mid v(x)>s\}|=|\{x\in D\mid w(x)>s\}|,\,\forall\,s\in\mathbb R\right\},
\end{equation}
where $|\cdot|$ is the two-dimensional Lebesgue measure.
  \end{itemize}
   The proofs of (C1)-(C3) can be found in \cite{MPu}, Chapter 1.
Note that the conservation of vorticity implies that for any $f\in C(\mathbb R)$,
  \begin{equation}\label{brol}
  \int_{D}f(\omega(t,x))dx=\int_{D}f(\omega(0,x))dx,\quad \forall\,t>0.
  \end{equation}

In view of  the divergence-free condition $\nabla\cdot\mathbf v=0$ and the impermeability boundary condition \eqref{bdry}, there is a scalar function $\psi: [0,+\infty)\times \bar D\to\mathbb R$, called the \emph{stream function}, such that
\begin{itemize}
\item[(i)] $\psi(t,\cdot)\in C^{2}(\bar D)$ for any $t\geq 0;$
\item[(ii)] $\psi$ is constant on $\Gamma_{i},$ $i=0,1,\cdot\cdot\cdot,N;$
\item[(iii)] $\mathbf v$ can be expressed as
\begin{equation}\label{vexp1}
\mathbf v=\nabla^{\perp}\psi,\quad \nabla^{\perp}:=(\partial_{2},-\partial_{1}).
\end{equation}
\end{itemize}
See for example \cite{MPu}, p. 16. Throughout this paper,  we always assume that
\begin{equation}\label{czy}
\mbox{\emph{the stream function vanishes on  $\Gamma_{0}$}}
\end{equation}
  by adding a suitable constant to it.

Below we show that the stream function $\mathbf \psi$  can be uniquely determined by the vorticity $\omega$ and the \emph{circulation vector} $\mathbf a=(a_{1},\cdot\cdot\cdot,a_{N})\in\mathbb R^{N},$ where $a_{i}$ is the circulation of the velocity field around the $i$-th inner boundary component $\Gamma_{i},$ i.e.,
 \begin{equation}\label{cvec1}
 a_{i}=\int_{\Gamma_{i}}\mathbf v\cdot d\mathbf r,
 \end{equation}
 Note that the integral in \eqref{cvec1} is constant in time by Kelvin's circulation theorem.
In view of  \eqref{svord}, \eqref{vexp1}, \eqref{czy} and \eqref{cvec1},
$\psi$  satisfies the following elliptic problem
\begin{equation}\label{stb02}
\begin{cases}
-\Delta \psi=\omega&\mbox{in }D,\\
\psi=0&\mbox{on }\Gamma_{0},\\
\psi\mbox{ is contant on }\Gamma_{i},&i= 1,\cdot\cdot\cdot,N,\\
\int_{\Gamma_{i}}\nabla\psi\cdot d\mathbf n=-a_{i},&i= 1,\cdot\cdot\cdot,N.
\end{cases}
 \end{equation}
 To express $\psi$ in terms of $\omega$ and $\mathbf a$, we introduce the following notation.
 \begin{itemize}
 \item [(N1)]Denote by $\mathcal G$  the Green operator related to $-\Delta$ in $D$ with zero Dirichlet boundary condition; or equivalently,  $\mathcal G\phi$ is the unique solution of the following elliptic problem
\begin{equation}\label{biosa10}
\begin{cases}
-\Delta \mathcal G\phi =\phi&\mbox{in } D,\\
\mathcal G\phi=0 &\mbox{on }D.
 \end{cases}
\end{equation}
 \item  [(N2)] For $1\leq i\leq N,$ let $\zeta_{i}\in C^{\infty}(\bar D)$ be the solution of the following Laplace equation
\begin{equation}\label{zet1n}
 \Delta \zeta_{i}=0\,\,\,\,\mbox{in } D,\quad
\zeta_{i}|_{\Gamma_{j}}=
\begin{cases}
1&\mbox{if }j=i,\\
0&\mbox{if }j\neq i,\, j=0,1,\cdot\cdot\cdot,N.
\end{cases}
\end{equation}
 \item  [(N3)]Define
\begin{equation}\label{zet2n}
p_{ij}=\int_{D}\nabla\zeta_{i}\cdot\nabla\zeta_{j}dx,\quad 1\leq i,j\leq N.
\end{equation}
It is easy to check that $(p_{ij})$ is a symmetric and positive definite matrix.
 \item [(N4)] The inverse  of $(p_{ij})$ is denoted by $ (q_{ij})$, which is also symmetric and positive definite.
 \item[(N5)] Define
 \begin{equation}\label{biosaa0}
\mathcal P\omega=\mathcal G\omega+\sum_{i,j=1}^{N}q_{ij}\int_{D}\zeta_{i}\omega dx\zeta_{j},\quad h_{\mathbf a}=-\sum_{i,j=1}^{N}q_{ij}a_{i}\zeta_{j}.
\end{equation}
 \end{itemize}
By Proposition \ref{propc2}, the elliptic problem \eqref{stb02} has a unique solution
  \begin{equation}\label{biosaa}
  \psi=\mathcal P\omega+h_{\mathbf a}.
 \end{equation}
As a consequence, the velocity field $\mathbf v$ can be expressed in terms of $\omega$ and $\mathbf a$ as
   \begin{equation}\label{biosab}
  \mathbf v=\nabla^{\perp}(\mathcal P\omega+h_{\mathbf a}).
  \end{equation}
The relation \eqref{biosab} is usually called the \emph{Biot-Savart law}, since it is very similar to the way that the magnetic field is generated by the electric current.

 According to the above discussion, at any fixed time, the state of an ideal fluid in $D$ can be described in three ways: the velocity field $\mathbf v$, the stream function $\psi$, or the pair $(\omega,\mathbf a),$ where $\omega$ is the vorticity and $\mathbf a$ is the circulation vector given by \eqref{cvec1}.

 \subsection{Arnold's second stability theorem}

For an Euler flow, if the velocity $\mathbf v$ and the scalar pressure $P$ do not depend on the  time variable, we call it a \emph{steady Euler flow}.

A large class of  steady Euler flows are those related to the following elliptic problem:
\begin{equation}\label{semim}
\begin{cases}
-\Delta  u=g(u),&\mbox{in } D,\\
u=0,&\mbox{on }\Gamma_{0},\\
u \mbox{ is constant on }\Gamma_{i},\,\, i=1,\cdot\cdot\cdot,N.
\end{cases}
\end{equation}
In fact, if $\bar\psi\in C^{2}(\bar D)$ solves \eqref{semim} with $g\in C(\mathbb R)$, then the reader can check that
 \[ {\mathbf v}=\nabla^{\perp}\bar\psi,\quad  P=-G(\bar\psi)-\frac{1}{2}|\nabla\bar\psi|^{2} \]
satisfy \eqref{eul} and \eqref{bdry},
where $G$ is an antiderivative of $g$.

Given a steady Euler flow, an important problem is to study its \emph{nonlinear stability}. More specifically, if another Euler flow (not necessarily steady) is ``close'' to the steady flow at $t=0$, we want to know  whether
it remains ``close'' to the steady flow for all $t>0$. Of course, the norm used to measure the ``closeness" of two flows and  the class of perturbed flows should be specified in rigorous statements.

Our purpose in this paper is to give some general and natural conditions to ensure the nonlinear stability of  steady Euler flows related to \eqref{semim}.
Our main result is closely related to Arnold's second stability theorem, the precise statement  of which in the setting of a multiply-connected domain is given below.

To make the statement concise, define
\begin{equation}\label{defx}
\mathcal X=\left\{u\in H^{1}(D)\mid u=v+\sum_{i=1}^{N}\theta_{i}\zeta_{i},\,\, v\in H^{1}_0(D), \,\,\theta_{i}\in\mathbb R, \,\,i=1\cdot\cdot\cdot,N\right\},
\end{equation}
\begin{equation}\label{defy}
\mathcal Y=\left\{u\in H^{2}(D)\mid u\in\mathcal X,\,\, \int_{\Gamma_{i}}\nabla u\cdot d\mathbf n=0, \,\,i=1\cdot\cdot\cdot,N\right\}.
\end{equation}
Note that these two function spaces will be frequently used throughout this paper. Some of their properties are given in Appendix \ref{apdx2}.

\begin{theorem}[Arnold's second stability theorem, \cite{A1,A2,AK}]\label{thm0}
Assume that $D\subset\mathbb R^{2}$ is a multiply-connected bounded  domain of the form \eqref{dform}, and  $\bar\psi\in C^{2}(\bar D)$ solves \eqref{semim} with $g\in C^{1}(\mathbb R)$. Suppose
\begin{equation}\label{min0}
 \min_{\bar D}g'(\bar\psi)>0,
 \end{equation}
 \begin{equation}\label{max0}
 \max_{\bar D}g'(\bar\psi)<\mathsf c,
   \end{equation}
   where $\mathsf c$ is a  positive number such that
   \begin{equation}\label{mathsfc}
   \mathsf c\int_D|\nabla u|^2dx\leq \int_D|\Delta u|^2dx,\quad \forall\,u\in\mathcal Y.
   \end{equation}
Let $\psi(t,x)=\bar\psi(x)+\varphi(t,x)$ be  the stream function of another  Euler flow such that 
\begin{equation}\label{jjzuo150}
\varphi(t,\cdot)\in\mathcal Y,\quad \forall\,t\geq 0,
\end{equation} 
Then
 \begin{equation}\label{conc3}
 \left(\frac{1}{ \max_{\bar D}g'(\bar\psi)}-\frac{1}{\mathsf c}\right)\|\Delta\varphi(t,\cdot)\|_{L^{2}(D)}^{2} \leq \frac{1}{ \min_{\bar D}g'(\bar\psi)}\|\Delta\varphi(0,\cdot)\|_{L^{2}(D)}^{2},\,\,\forall\, t>0.
\end{equation}
As a consequence,  the steady flow with  stream function $\bar\psi$ is nonlinearly stable in the following sense:

For any $\varepsilon>0,$ there exists some $\delta>0$, such that for any   Euler flow with  stream function $\bar\psi+\varphi$  satisfying
\begin{equation}\label{jjzuo15}
\varphi(t,\cdot)\in\mathcal Y,\quad \forall\,t\geq 0,
\end{equation}
it holds that
\[\|\Delta\varphi(0,\cdot)\|_{L^{2}(D)}<\delta\quad\Longrightarrow\quad \|\Delta\varphi(t,\cdot)\|_{L^{2}(D)}<\varepsilon \quad\forall\,t>0.\]

\end{theorem}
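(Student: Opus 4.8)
The plan is to run Arnold's energy--Casimir method, exploiting that the perturbed flow conserves both the kinetic energy (C1) and every Casimir $\int_D F(\omega)\,dx$, the latter following from (C3) through \eqref{brol}. Write $\bar\omega=g(\bar\psi)=-\Delta\bar\psi$, and for the perturbed flow set $\omega(t)=-\Delta\psi(t)=\bar\omega-\Delta\varphi(t,\cdot)$, so that $\tilde\omega(t):=\omega(t)-\bar\omega=-\Delta\varphi(t,\cdot)$ and $\|\tilde\omega(t)\|_{L^{2}(D)}=\|\Delta\varphi(t,\cdot)\|_{L^{2}(D)}$. The first task is to choose the Casimir density $F$. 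Since $\min_{\bar D}g'(\bar\psi)>0$ and $\bar D$ is connected, $g$ restricts to a strictly increasing $C^{1}$ bijection from the interval $\bar\psi(\bar D)$ onto $\bar\omega(\bar D)$; I would set $F'=g^{-1}$ on $\bar\omega(\bar D)$, where $(g^{-1})'=1/(g'\circ g^{-1})\in[1/\max_{\bar D}g'(\bar\psi),\,1/\min_{\bar D}g'(\bar\psi)]$, and then extend $F'$ to a $C^{1}$ function on all of $\mathbb R$ keeping its derivative in this same interval (for instance affinely beyond the endpoints). This yields $F\in C^{2}(\mathbb R)$ with the \emph{global} bound $1/\max_{\bar D}g'(\bar\psi)\le F''\le 1/\min_{\bar D}g'(\bar\psi)$ and with $F'(\bar\omega)=\bar\psi$.

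Next I would introduce the conserved energy--Casimir functional $\mathcal H(\omega)=\tfrac12\int_D|\nabla\psi|^{2}\,dx-\int_D F(\omega)\,dx$ together with the relative functional
\[
\mathcal D(\omega):=\int_D\bigl[F(\omega)-F(\bar\omega)-\bar\psi\,\tilde\omega\bigr]\,dx-\tfrac12\int_D|\nabla\varphi|^{2}\,dx.
\]
The key algebraic identity to verify is $\mathcal D(\omega)=\mathcal H(\bar\omega)-\mathcal H(\omega)$, which shows that $\mathcal D(\omega(t))$ is constant in $t$. Expanding $\tfrac12\int_D|\nabla\psi|^{2}$ about $\bar\psi$ produces the cross term $\int_D\nabla\bar\psi\cdot\nabla\varphi\,dx$, and integration by parts rewrites it as $\int_D\bar\psi\,\tilde\omega\,dx+\int_{\partial D}\bar\psi\,(\nabla\varphi\cdot\mathbf n)\,ds$. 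The boundary integral vanishes precisely because $\varphi(t,\cdot)\in\mathcal Y$: one has $\bar\psi=0$ on $\Gamma_{0}$ and $\bar\psi$ constant on each $\Gamma_{i}$ with $\int_{\Gamma_{i}}\nabla\varphi\cdot\mathbf n\,ds=\int_{\Gamma_{i}}\nabla\varphi\cdot d\mathbf n=0$. This is the step where the structure of $\mathcal Y$, hence the conservation of circulation, enters essentially.

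Finally I would estimate $\mathcal D$ from both sides using the global convexity bounds on $F$. Taylor's theorem with the two-sided bound on $F''$ gives, pointwise, $\tfrac{1}{2\max_{\bar D}g'(\bar\psi)}\tilde\omega^{2}\le F(\omega)-F(\bar\omega)-\bar\psi\,\tilde\omega\le \tfrac{1}{2\min_{\bar D}g'(\bar\psi)}\tilde\omega^{2}$, so the Casimir part of $\mathcal D$ lies between $\tfrac{1}{2\max_{\bar D}g'(\bar\psi)}\|\tilde\omega\|_{L^{2}(D)}^{2}$ and $\tfrac{1}{2\min_{\bar D}g'(\bar\psi)}\|\tilde\omega\|_{L^{2}(D)}^{2}$. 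Combining the lower bound at time $t$ with the Poincar\'e-type inequality \eqref{mathsfc}, namely $\tfrac12\int_D|\nabla\varphi|^{2}\le \tfrac{1}{2\mathsf c}\|\Delta\varphi\|_{L^{2}(D)}^{2}=\tfrac{1}{2\mathsf c}\|\tilde\omega\|_{L^{2}(D)}^{2}$, yields $\mathcal D(\omega(t))\ge \tfrac12\bigl(\tfrac{1}{\max_{\bar D}g'(\bar\psi)}-\tfrac{1}{\mathsf c}\bigr)\|\tilde\omega(t)\|_{L^{2}(D)}^{2}$, while discarding the nonnegative Dirichlet term at $t=0$ gives $\mathcal D(\omega(0))\le \tfrac{1}{2\min_{\bar D}g'(\bar\psi)}\|\tilde\omega(0)\|_{L^{2}(D)}^{2}$. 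Equating $\mathcal D(\omega(t))=\mathcal D(\omega(0))$ produces \eqref{conc3}. Since $\max_{\bar D}g'(\bar\psi)<\mathsf c$, the coefficient of $\|\Delta\varphi(t,\cdot)\|_{L^{2}(D)}^{2}$ is strictly positive, so \eqref{conc3} reads $\|\Delta\varphi(t,\cdot)\|_{L^{2}(D)}^{2}\le K\|\Delta\varphi(0,\cdot)\|_{L^{2}(D)}^{2}$ with $K$ independent of $t$, and choosing $\delta=\varepsilon/\sqrt K$ gives the stated nonlinear stability.

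I expect the main obstacle to be the construction and use of the extended Casimir $F$: the estimates must hold for \emph{all} admissible perturbations with $\varphi(t,\cdot)\in\mathcal Y$, not merely small ones, so $F''$ must be controlled on the full range of $\omega(t,x)$ rather than only near $\bar\omega$. This is exactly why $F'$ must be extended beyond $\bar\omega(\bar D)$ with slope kept in $[1/\max_{\bar D}g'(\bar\psi),\,1/\min_{\bar D}g'(\bar\psi)]$; checking that such a $C^{1}$ extension exists while preserving $F'(\bar\omega)=\bar\psi$ is the delicate point, after which the boundary-term cancellation and the convexity estimates are routine.
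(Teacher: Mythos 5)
Your proposal is correct and follows essentially the same route as the paper's proof in Appendix \ref{apdx1}: the energy--Casimir functional built from a globally extended $g^{-1}$ with derivative pinned in $[1/\max_{\bar D}g'(\bar\psi),\,1/\min_{\bar D}g'(\bar\psi)]$, the boundary-term cancellation via $\varphi(t,\cdot)\in\mathcal Y$, the two-sided Taylor bounds, and the use of \eqref{mathsfc} together with discarding the Dirichlet term at $t=0$. The only cosmetic differences are that you phrase the conserved functional in vorticity variables and introduce the relative functional $\mathcal D=\mathcal H(\bar\omega)-\mathcal H(\omega)$ explicitly, whereas the paper works with the stream-function version and the difference $\mathcal H(\bar\psi+\varphi_t)-\mathcal H(\bar\psi)$, and that you extend $F'=g^{-1}$ affinely rather than extending $g$ and then inverting---equivalent constructions.
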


\begin{remark}
The assumptions \eqref{min0} and \eqref{max0}  correspond to the condition \eqref{arnc2}.
\end{remark}

\begin{remark}
   Note that there exists a positive number $\mathsf c$ such that \eqref{mathsfc} holds by Proposition \ref{jjzuo} in Section 3.
\end{remark}

\begin{remark}
The assumptions \eqref{jjzuo150} and \eqref{jjzuo15} are  reasonable by Kelvin's circulation theorem.
\end{remark}

Arnold first stated  Theorem \ref{thm0}  in \cite{A1}, and then gave a rigorous proof in \cite{A2}.
 The above (more detailed) version is taken from Arnold and Khesin's book \cite{AK}, p. 97.  Since the proof of Theorem \ref{thm0} was omitted in \cite{AK},   we provide a detailed one  in Appendix \ref{apdx1}  for the reader's convenience.

Theorem \ref{thm0} gives some succinct conditions that are sufficient for the nonlinear stability of steady Euler flows related to \eqref{semim}. There are, however, some limitations in specific applications:
\begin{itemize}
\item[(i)] The conditions \eqref{min0} and \eqref{max0} are very strong requirements on the function $g$, which exclude many interesting cases.
A typical example is as follows. Let $\tilde u$ be a minimizer for the variational problem \eqref{var3} in Section 3, then $\tilde u$ corresponds to a steady Euler flow by Proposition \ref{prop1}(ii). Such a steady flow is nonlinearly stable (which can be proved directly by using Proposition \ref{prop37}), but the condition \eqref{max0} is not satisfied, i.e., a positive constant $\mathsf c$ satisfying \eqref{max0} and \eqref{mathsfc} does not exist (see  Proposition \ref{prop37}).

\item[(ii)] The condition \eqref{jjzuo15} in Theorem \ref{thm0} requires
 that the perturbed flow (with  stream function $\psi$) has the same circulation around every inner boundary component of the domain $D$ as the original flow (with stream function $\bar\psi$), which is not very natural.
  \item[(iii)] The norm to measure nonlinear stability is the $L^2$ norm of the vorticity. For other norms, say $L^p$ norms of the vorticity, whether nonlinear stability holds is not clear.
  \end{itemize}

\subsection{Main result}

Our main result in this paper is the following extension of Theorem \ref{thm0}.

\begin{theorem}\label{thm1}
Assume that $D\subset\mathbb R^{2}$ is a multiply-connected bounded  domain of the form \eqref{dform}, and  $\bar\psi\in C^{2}(\bar D)$ solves \eqref{semim} with $g\in C^{1}(\mathbb R)$. Suppose
\begin{equation}\label{min0w}
 \min_{\bar D}g'(\bar\psi)\geq 0,
 \end{equation}
 \begin{equation}\label{ssmmy}
 \int_{D}|\nabla u|^{2}-g'(\bar\psi)u^{2}dx\geq 0,\quad\forall\,u\in \mathcal Y.
 \end{equation}
Then for any fixed $1<p<+\infty$, the steady Euler flow related to $\bar\psi$ is nonlinearly stable in the following sense:

For any $\varepsilon>0$, there exists some $\delta>0,$ such that for any  Euler flow with stream function $\bar\psi+\varphi$, it holds that
\[ \|\varphi(0,\cdot)\|_{W^{2,p}(D)}<\delta\quad\Longrightarrow\quad\|\varphi(t,\cdot)\|_{W^{2,p}(D)}<\varepsilon \quad \forall\,t>0.\]
\end{theorem}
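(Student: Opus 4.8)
The plan is to realize the steady state $\bar\psi$ as the unique solution of a constrained variational problem, and then to deduce nonlinear stability from a compactness argument fed by the conserved quantities (C1)--(C3). Since $-\Delta\varphi=\omega-\bar\omega$ and $\varphi$ is constant on each inner boundary with $\int_{\Gamma_i}\nabla\varphi\cdot d\mathbf n=\bar a_i-a_i$, elliptic ($L^p$) regularity reduces $W^{2,p}$-control of $\varphi$ to $L^p$-control of $\omega-\bar\omega$ together with control of the circulation vector $\mathbf a-\bar{\mathbf a}$. Thus it suffices to prove $L^p$-stability of the vorticity and stability of the circulation; the freedom $1<p<+\infty$ enters precisely through the reflexivity and uniform convexity of $L^p$ used in the compactness step.

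First I would construct a conserved energy--Casimir functional adapted to $\bar\psi$. Writing $E[\omega,\mathbf a]=\tfrac12\int_D|\nabla\psi|^2dx$ for the kinetic energy expressed through the Biot--Savart law \eqref{biosab}, and choosing a convex function $F$ whose derivative inverts the profile relation $\bar\omega=g(\bar\psi)$ coming from \eqref{semim}, the functional $\mathcal E[\omega,\mathbf a]:=E[\omega,\mathbf a]-\int_D F(\omega)\,dx$ is invariant along the flow by (C1)--(C3). Its first variation vanishes at $(\bar\omega,\bar{\mathbf a})$, and the content of \eqref{min0w}--\eqref{ssmmy} is exactly that this critical point is a maximizer: since the profile relation is increasing, $\bar\omega$ should maximize the (convex) kinetic energy over its rearrangement class with prescribed circulation, and \eqref{ssmmy} is the second-order condition guaranteeing that the particular critical state $\bar\omega=g(\bar\psi)$ is the global maximizer rather than a saddle. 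The delicate point is that \eqref{min0w} only requires $g'(\bar\psi)\geq 0$, so $g$ need not be strictly increasing and its inverse, hence $F$, may fail to exist classically. Here I would invoke the supporting-functional construction of Wolansky--Ghil, replacing $F$ by a genuinely convex supporting function that touches the profile at $\bar\psi$ and yields a variational characterization valid in the degenerate case. I expect this characterization --- establishing that $(\bar\omega,\bar{\mathbf a})$ is the \emph{unique} maximizer of $E$ over the set of states whose vorticity lies in $\mathcal R_{\bar\omega}$ and whose circulation equals $\bar{\mathbf a}$ --- to be the main obstacle, since the non-strict inequality in \eqref{ssmmy} and the possible vanishing of $g'(\bar\psi)$ rule out any naive strict-concavity argument and force one to combine convex-duality estimates with the rigidity of rearrangement classes.

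Next I would prove a compactness lemma: if $(\omega_n,\mathbf a_n)$ are admissible states with $\mathbf a_n\to\bar{\mathbf a}$, with $\omega_n$ lying in rearrangement classes converging to $\mathcal R_{\bar\omega}$ (so that every $L^q$-norm of $\omega_n$ converges to that of $\bar\omega$), and with $\mathcal E[\omega_n,\mathbf a_n]$ tending to the maximal value, then $\omega_n\to\bar\omega$ strongly in $L^p$. The argument extracts a weak $L^p$-limit, identifies it as $\bar\omega$ via the variational characterization --- using that $E$ is weakly \emph{continuous} because the Green operator $\mathcal G$ is compact on $\mathcal X$, together with the fact that the Casimir $\int_D F(\omega)\,dx$ is constant along a rearrangement class and so passes to the limit --- and then upgrades weak to strong convergence: in the uniformly convex space $L^p$, weak convergence together with convergence of the $L^p$-norm (preserved by rearrangement) forces strong convergence.

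Finally, stability follows by contradiction. If it failed, there would be Euler flows with stream functions $\bar\psi+\varphi_n$ and times $t_n$ with $\|\varphi_n(0,\cdot)\|_{W^{2,p}}\to 0$ but $\|\varphi_n(t_n,\cdot)\|_{W^{2,p}}=\varepsilon_0$. Conservation of circulation (C2) gives $\mathbf a_n(t_n)=\mathbf a_n(0)\to\bar{\mathbf a}$; conservation of vorticity (C3) keeps $\omega_n(t_n)$ in the rearrangement class of $\omega_n(0)$, which converges to $\mathcal R_{\bar\omega}$ since $\omega_n(0)\to\bar\omega$ in $L^p$; and conservation of energy (C1) together with continuity of $\mathcal E$ forces $\mathcal E[\omega_n(t_n),\mathbf a_n(t_n)]\to\mathcal E[\bar\omega,\bar{\mathbf a}]$, the maximum. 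Hence $(\omega_n(t_n),\mathbf a_n(t_n))$ is a maximizing sequence, so the compactness lemma yields $\omega_n(t_n)\to\bar\omega$ in $L^p$, i.e.\ $\varphi_n(t_n,\cdot)\to 0$ in $W^{2,p}$, contradicting $\|\varphi_n(t_n,\cdot)\|_{W^{2,p}}=\varepsilon_0$.
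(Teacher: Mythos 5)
Your outline reproduces the paper's architecture exactly --- an energy--Casimir functional with a Wolansky--Ghil supporting functional yielding a variational characterization of $\bar\omega$, a compactness lemma exploiting weak continuity of the energy and uniform convexity of $L^p$, and a contradiction argument driven by (C1)--(C3) with the first-exit-time normalization $\|\varphi_n(t_n,\cdot)\|_{W^{2,p}}=\varepsilon_0$ --- but at the two points where the proof has actual content you have written a placeholder rather than an argument. First, the characterization you announce, that $(\bar\omega,\bar{\mathbf a})$ is the \emph{unique global} maximizer of $E$ over states with vorticity in $\mathcal R_{\bar\omega}$, is not obtainable from \eqref{min0w}--\eqref{ssmmy}: these are second-order conditions at $\bar\psi$, and they yield only that $\bar\omega$ is an \emph{isolated local} maximizer (this is Theorem \ref{thm2}). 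This forces a localization you never perform: the compactness statement must be restricted to a small ball $\mathbf B_{\hat r}(\bar\omega)$ in $L^p(D)$ and $\varepsilon_0$ must be taken below $\hat r$, otherwise ``tending to the maximal value'' is either undefined or the conclusion is false. More importantly, even the local statement does not follow from ``convex-duality estimates plus rigidity of rearrangements'': since \eqref{ssmmy} is non-strict and $g'(\bar\psi)$ may vanish, one must first upgrade \eqref{ssmmy} to the quantitative weak positive definiteness of Proposition \ref{keyprop} (coercivity with constant $\delta_0$ after subtracting the $g'(\bar\psi)$-weighted mean), and in the multiply-connected case its proof requires showing that \eqref{ssmmy} extends from $\mathcal Y$ to $\mathcal X$ (Corollary \ref{glav2022}, proved via a constrained variational problem), because $\mathcal Y$ is not weakly closed in $H^1(D)$, together with the strong maximum principle to rule out a degenerate minimizer. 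Your proposal flags this as ``the main obstacle'' but supplies no mechanism for it; the naive second-order argument genuinely fails here.

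Second, your compactness lemma assumes away the off-class perturbation problem. The perturbed vorticities $\omega_n(t_n,\cdot)$ lie in $\mathcal R_{\omega_n(0,\cdot)}$, not in $\mathcal R_{\bar\omega}$ or its weak closure, and your hypothesis ``rearrangement classes converging to $\mathcal R_{\bar\omega}$'' is precisely the missing step. To run your identification argument one must (i) produce for each $n$ an element $\sigma_n\in\mathcal R_{\bar\omega}$ with $\|\sigma_n-\omega_n(t_n,\cdot)\|_{L^p(D)}\to0$ --- the paper's ``followers'' $\sigma_n(t,x)=\bar\omega(\Psi_n(t,x))$, built from the perturbed flow map (Lemma \ref{follower}), whose distance to $\omega_n(t,\cdot)$ is constant in time --- and (ii) show that the resulting weak limit, which lies in $\bar{\mathcal R}_{\bar\omega}$, must equal $\bar\omega$. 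Step (ii) is not a consequence of weak continuity of $E$ alone: one must extend the local maximality from $\mathcal R_{\bar\omega}$ to its weak closure and then prove uniqueness of the maximizer there, which uses Burton's convexity of $\bar{\mathcal R}_{\bar\omega}$ (Lemma \ref{lem201}), the optimal-pairing lemma (Lemma \ref{lem202}), and the strict positive definiteness of $\mathcal P$ (Lemmas \ref{lem404} and \ref{lem405}). Finally, a slip worth correcting: the Casimir $\int_D F(\omega)\,dx$ is constant on each rearrangement class but does \emph{not} pass to weak limits ($F$ is nonlinear), so it cannot be carried into the weak-convergence step; the correct device, as in the paper, is to note that on $\mathcal R_{\bar\omega}$ maximizing the energy--Casimir functional is equivalent to maximizing $E(\cdot,\mathbf a)$ alone, and to run the compactness argument for $E$ only.
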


 \begin{remark}\label{rafthm1}
By   Corollary \ref{glav2022} (taking $c=-g'(\bar\psi)$ therein),  the condition \eqref{ssmmy} is in fact equivalent to
 \[\int_{D}|\nabla u|^{2}-g'(\bar\psi)u^{2}dx\geq 0,\quad\forall\,u\in \mathcal X.\]
\end{remark}

\begin{remark}\label{vico1}
By  Lemma \ref{propd1}, the nonlinear stability of the steady Euler flow in Theorem \ref{thm1} can also be described as follows.
\begin{itemize}
\item[(i)] Stability in terms of the velocity: Let $\bar{\mathbf v}$ be the corresponding velocity of the steady flow, then for any $\varepsilon>0$, there exists some $\delta>0$, such that for any Euler flow with  velocity $\mathbf v$, it holds that
\[ \|\mathbf v(0,\cdot)-\bar{\mathbf v}\|_{W^{1,p}(D)}<\delta\quad\Longrightarrow\quad\|\mathbf v(t,\cdot)-\bar{\mathbf v}\|_{W^{1,p}(D)}<\varepsilon \quad \forall\,t>0.\]
\item[(ii)]Stability in terms of the  vorticity and the circulation vector: Let $\bar{\omega}$ be the corresponding vorticity and $\mathbf a$ be the corresponding circulation vector  of  the steady flow, then for any $\varepsilon>0$, there exists some $\delta>0$, such that for any Euler flow with  vorticity $\omega$ and circulation vector $\mathbf b$, it holds that
\[ \|\omega(0,\cdot)-\bar{\omega}\|_{L^{p}(D)}+|\mathbf b-\mathbf a|<\delta\quad\Longrightarrow\quad \|\omega(t,\cdot)-\bar{\omega}\|_{L^{p}(D)}<\varepsilon \quad \forall\,t>0.\]
\end{itemize}
 \end{remark}

\begin{remark}
 Theorem \ref{thm1} may hold for less regular perturbations as well. See Remark \ref{lessregular} for a brief discussion.

\end{remark}

 Our Theorem \ref{thm1} extends Arnold's second stability theorem in the following three aspects.
First, the condition \eqref{min0w} is weaker than \eqref{min0}.
Second, the conclusion of Theorem \ref{thm1} is stronger than that of Theorem \ref{thm0}: the perturbed flows in Theorem \ref{thm1} are not required to have the same circulations around the $N$ inner boundaries as the original steady flow; the stability is measured in more general norms (by  Lemma \ref{propd1}, $\|\Delta\varphi\|_{L^2(D)}$ is an equivalent norm to $\|\varphi\|_{H^2(D)}$ for  $\varphi\in\mathcal Y$).
Third,  the condition \eqref{ssmmy} is weaker  than \eqref{max0}. To see this, we show that   \eqref{max0} in fact implies \eqref{ssmmy}. Assume that \eqref{max0} holds.  By \eqref{mathsfc},   the positive constant $\mathsf c$ satisfies
\[\frac{1}{\mathsf c}\geq \sup\left\{\int_D|\nabla u|^2dx\mid  u\in\mathcal Y, \,\,\|\Delta u\|_{L^2(D)}=1 \right\}.\]
Taking into account \eqref{kzj8}, \eqref{var10}  and   Proposition \ref{prop37}(i) in Section 3, we have
\[\mathsf c\leq  \inf\left\{\int_{D}|\nabla u|^{2}dx\mid u\in\mathcal Y, \,\,\|u\|_{L^{2}(D)}=1\right\},\]
which implies
\[\int_D|\nabla u|^2-\mathsf c\int_Du^2 dx\geq 0,\quad\forall\,u\in\mathcal Y.\]
Taking into account \eqref{max0}, we get  \eqref{ssmmy}. Roughly speaking, \eqref{ssmmy} only requires $g'(\bar\psi)$ to be small in an ``average'' sense, rather than in the  $L^{\infty}$ norm as in \eqref{max0}.

By Theorem \ref{thm1}, the steady flow with stream function $\tilde u,$ a minimizer for the variational problem \eqref{var3} in Section 3, is nonlinearly stable. In addition,  Theorem \ref{thm1} also applies to a large class of steady Euler flows obtained by the variational approach. Consider the following functional
\[\mathcal I(u)=\frac{1}{2}\int_D|\nabla u|^2dx-\int_DG(u)du,\quad G(s)=\int_0^sg(\tau)d\tau.\]
If $g$ satisfies some suitable growth conditions, then $I$ is well-defined in $\mathcal X$, and has a global minimum, say $\bar\psi$, in $\mathcal X$. See for example \cite{Bad}, Theorem 2.1.5. The fact that the first variation of $\mathcal I$ at $\bar\psi$ vanishes gives
\[-\Delta\bar\psi=g(\bar\psi),\]
and the fact that the second variation of $\mathcal I$ at $\bar\psi$ is nonnegative gives
 \begin{equation}\label{my990}
 \int_{D}|\nabla u|^{2}-g'(\bar\psi)u^{2}dx\geq 0,\quad\forall\,u\in \mathcal X.
 \end{equation}
 Then by Theorem \ref{thm1}, $\bar\psi$ corresponds to a stable steady Euler flow.

The proof of Theorem \ref{thm1} is based on the Lyapunov direct method. It consists of three steps:
\begin{itemize}
\item [(i)] a suitable variational characterization,
\item [(ii)] a compactness result related to the variational characterization,
\item [(iii)] proof of stability based on conservation properties of the Euler equations.
\end{itemize}
  These three ingredients are also essential in studying the nonlinear stability of other steady Euler flows. See for example \cite{Abe,BG,BD,Bjde,Bcmp,CWW,CWCV,CWN,CD,Ta,WGu2,WP}.

The variational characterization, formulated as Theorem \ref{thm2} in  Section \ref{varcha}, is the most important ingredient of the proof. Therein, we show that the steady flow in Theorem \ref{thm1} has \emph{strict} local maximum kinetic energy among all \emph{isovortical} flows that have the same circulations around every  boundary component.  Here a family of instantaneous flows are said to be isovortical if their vorticities are rearrangements of some fix function. Such a  characterization is very much in the spirit of the method of Arnold \cite{A1,A11,A2}. See also \cite{B5,Bjde,Bcmp}.

To prove the variational characterization, we first show that the nonnegative condition \eqref{ssmmy} actually implies certain weak positive definiteness (Proposition \ref{keyprop}). This is achieved by contradiction in combination with a weak convergence argument.
With weak positive definiteness, we can use the energy-Casimir method proposed by Arnold \cite{A1,A2}, and the supporting functional method introduced by Wolansky and Ghil \cite{WG0,WG}, to prove the desired  variational characterization.

Note that in the case of a simply-connected domain, a similar result to Theorem \ref{thm2} has been obtained by the first author in \cite{WGu1}. Compared with \cite{WGu1}, the presence of  inner boundaries in this paper causes some essential difficulties, and new groundwork needs to be laid. A noteworthy difference appears in the proof of weak positive definiteness. Therein, a necessary step is to show that the condition \eqref{ssmmy} is ``closed" under weak convergence in $H^1(D)$. This can not be proved by standard density argument since $\mathcal Y$ is not closed in $H^1(D)$. Note that this problem does not appear in \cite{WGu1}, since  $\mathcal Y$ becomes $H^1_0(D)$ without inner boundary components. To overcome this difficulty,  we employ an indirect approach; more precisely, we prove by studying a constrained variational problem  in  $\mathcal X$ that
\begin{equation}\label{caxy}
\inf_{u\in\mathcal X, \|u\|_{L^{2}(D)}=1} \int_{D}|\nabla u|^{2}-g'(\bar\psi)u^{2}dx =\inf_{u\in\mathcal Y,  \|u\|_{L^{2}(D)}=1} \int_{D}|\nabla u|^{2}-g'(\bar\psi)u^{2}dx.
\end{equation}
See Proposition \ref{prop1}(iii). Based on this fact,  \eqref{ssmmy} implies that
 \begin{equation}\label{my99m}
 \int_{D}|\nabla u|^{2}-g'(\bar\psi)u^{2}dx\geq 0,\quad\forall\,u\in \mathcal X.
 \end{equation}
Therefore the condition \eqref{ssmmy} is indeed ``closed" under weak convergence in $H^1(D)$ since $\mathcal X$ is closed in $H^1(D)$.

Regarding the variational characterization in Section 4, we prove a compactness result in Section 5 based on the positive definiteness of the operator $\mathcal P$  and  two properties of rearrangements (Lemma \ref{lem201} and Lemma \ref{lem202}).
Then  nonlinear stability is proved in Section 6 by a standard contradiction argument in combination of the energy and vorticity conservation of the 2D Euler equations. Sections 5 and 6 are mostly inspired by Burton \cite{B5}.

Except for the Lyapunov direct method, another very effective approach to study the stability/instability of 2D steady Euler flows is the linearized method. We refer the interested reader to \cite{BG,FH,FSV,G,K,LZ1,LZ,LZ3,VF} and the references therein.

The rest of paper is organized as follows. In Section 2, we give some preliminary materials. In Section 3, we study several constrained variational problems in $\mathcal X$ and $\mathcal Y,$ which are used frequently in this paper.  Sections 4-6 are devoted to the proof of Theorem \ref{thm1}.

\section{Preliminaries}


We first list some properties of increasing functions  defined on the real line, which will be used in  Section \ref{varcha}.

\begin{lemma}[\cite{WGu1}, Lemma 2.4]\label{mM}
Let $g\in C^{1}(\mathbb R)$ be increasing (i.e., $g(s_{1})\leq g(s_{2})$ whenever $s_{1}\leq  s_{2}$).  Then for any  $-\infty<a<b<+\infty$,   there exists   $\tilde g\in C^{1}(\mathbb R)$ such that
\begin{itemize}
\item[(i)]   $\tilde g(s)=g(s)$ for any $s\in [a,b]$;
\item [(ii)] $\tilde g$ is strictly increasing in $(-\infty, a]$ and $[b,+\infty)$;
\item[(iii)] there exist positive constants $c_1,c_2$ such that
\begin{equation}\label{lgro}
\lim_{s\to+\infty}\frac{\tilde g(s)}{s}=c_1,\quad \lim_{s\to-\infty}\frac{\tilde g(s)}{s}=c_2.
\end{equation}
\end{itemize}

\end{lemma}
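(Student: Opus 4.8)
The plan is to construct $\tilde g$ by modifying $g$ only outside the interval $[a,b]$, so that condition (i) holds automatically. The key difficulty is that $g$ is only assumed increasing, hence $g'$ may vanish and $g$ may grow arbitrarily fast or arbitrarily slowly at $\pm\infty$; we must produce a new function that becomes \emph{strictly} increasing past the endpoints and, simultaneously, has exact linear growth at both infinities while matching $g$ together with its first derivative at $a$ and $b$ to preserve $C^1$ regularity.

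First I would handle the right half-line $[b,+\infty)$. Fix the matching data $g(b)$ and $g'(b)\geq 0$. The idea is to interpolate smoothly between the given slope $g'(b)$ at $s=b$ and a prescribed positive limiting slope $c_1>0$ as $s\to+\infty$. Concretely, choose a $C^1$ function $m(s)$ on $[b,+\infty)$ with $m(b)=g'(b)$, $m(s)>0$ for $s>b$, and $m(s)\to c_1$ as $s\to+\infty$ (for instance a monotone transition that is bounded below by a positive constant on $[b+1,+\infty)$); then define $\tilde g(s)=g(b)+\int_b^s m(\tau)\,d\tau$ on $[b,+\infty)$. Since $m>0$ on $(b,+\infty)$, $\tilde g$ is strictly increasing there, and $\tilde g'(b)=m(b)=g'(b)$ guarantees $C^1$ matching at $b$. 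The linear growth condition $\lim_{s\to+\infty}\tilde g(s)/s=c_1$ follows from $m(s)\to c_1$ by L'Hôpital (or directly from the integral representation). One must only check that $m$ can be chosen $C^1$ with $m(b)=g'(b)$; since $g'(b)\geq 0$ and we need $m>0$ just beyond $b$, a standard smooth bump-type interpolation works, and if $g'(b)=0$ we simply let $m$ increase immediately to positive values.

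The left half-line $(-\infty,a]$ is treated symmetrically: interpolate the slope from $g'(a)\geq 0$ at $s=a$ to a positive limit $c_2>0$ as $s\to-\infty$, via a positive $C^1$ function on $(-\infty,a]$, and set $\tilde g(s)=g(a)-\int_s^a m(\tau)\,d\tau$. On $[a,b]$ we keep $\tilde g=g$. The three pieces agree in value and first derivative at $a$ and $b$ by construction, so $\tilde g\in C^1(\mathbb R)$, giving (i); strict monotonicity on the two outer rays gives (ii); and the slope limits give (iii) with the indicated positive constants $c_1,c_2$.

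The main obstacle is the simultaneous satisfaction of the derivative-matching at the endpoints and the positivity of the interpolating slope, particularly in the degenerate case $g'(a)=0$ or $g'(b)=0$. Because $g$ is merely increasing, the matched slope may be zero, so the transition function $m$ must start at zero yet become strictly positive immediately and stay positive while converging to the prescribed positive limit. This is the only delicate point; it is resolved by choosing $m$ to be any smooth function that equals $g'(b)$ at the endpoint, is strictly positive on the open ray, and is eventually constant equal to $c_1$ (respectively $c_2$). The remaining verifications—regularity, monotonicity, and the asymptotic ratio—are then routine.
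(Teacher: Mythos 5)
Your construction is correct and complete. Note, however, that the paper itself gives no proof of this lemma: it is quoted verbatim from \cite{WGu1} (Lemma 2.4 there), so there is no in-paper argument to compare against; your slope-interpolation construction is exactly the natural way to prove it. Two small remarks. First, for $\tilde g\in C^1(\mathbb R)$ you only need the interpolating slope $m$ to be \emph{continuous} with $m(b)=g'(b)$ (respectively $m(a)=g'(a)$); requiring $m\in C^1$ is harmless but unnecessary. Second, the ``delicate point'' you isolate (the degenerate case $g'(b)=0$) disappears entirely if you take the explicit choice
\begin{equation*}
m(s)=c_1+\bigl(g'(b)-c_1\bigr)e^{-(s-b)},\qquad s\geq b,
\end{equation*}
which satisfies $m(b)=g'(b)$, $m(s)>\min\{g'(b),c_1\}\geq 0$ and indeed $m(s)>0$ for all $s>b$ whenever $c_1>0$, and $m(s)\to c_1$; the symmetric formula works on $(-\infty,a]$. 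With this choice strict monotonicity, the $C^1$ matching, and the limit $\tilde g(s)/s\to c_1$ (by the integral representation, no L'H\^opital needed since $\tilde g'$ need not be assumed to have a limit in general—though here it does) are all immediate, so no case distinction is required.
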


\begin{lemma}[\cite{WGu1}, Lemma 2.3]\label{lt}
Let  $g\in C^1(\mathbb R)$ be increasing.   Suppose there exist positive constants $c_1,c_2$ such that \[\lim_{s\to+\infty}\frac{g(s)}{s}=c_1,\quad \lim_{s\to-\infty}\frac{g(s)}{s}=c_2.\]
Denote $G(s)=\int_0^sg(\tau)d\tau$. Define the Legendre transform $\hat G:\mathbb R\to(-\infty,+\infty]$ of $G$ as follows
\begin{equation}\label{dolt}
\hat G(s)=\sup_{\tau\in\mathbb R}(\tau s-G(\tau)).
\end{equation}
Then the following two assertions hold:
\begin{itemize}
\item[(i)] For any $s,\tau\in\mathbb R,$ it holds that
\[\hat G(s)+G(\tau)\geq s\tau,\]
 and the equality holds if and only if $g(\tau)=s.$
\item [(ii)]  Define
\[f(s)=\inf\{\tau\mid g(\tau)=s\}.\]
Then $f$ is a strictly increasing real-valued function on $\mathbb R$,
and
\[\hat G(s)=\int_0^sf(\tau)d\tau+\hat G(0),\quad\forall\,s\in\mathbb R.\]
 As a consequence,
$\hat G$ is locally Lipschitz continuous on $\mathbb R$ and $\hat G'(s)=f(s)$ a.e. $s\in\mathbb R.$
\end{itemize}
\end{lemma}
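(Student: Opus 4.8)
The plan is to exploit that $G$ is a $C^2$ convex function with quadratic growth, so that $\hat G$ is its classical convex conjugate, and to read off both assertions from the Fenchel--Young relation together with elementary monotonicity arguments. First I would record the basic structure of $G$. Since $g\in C^1(\mathbb R)$ is increasing, $G$ is convex and $C^2$ with $G'=g$; moreover the hypotheses $g(s)/s\to c_1>0$ and $g(s)/s\to c_2>0$ force $G(s)\sim\tfrac{c_1}{2}s^2$ as $s\to+\infty$ and $G(s)\sim\tfrac{c_2}{2}s^2$ as $s\to-\infty$. Consequently, for each fixed $s$ the map $\tau\mapsto s\tau-G(\tau)$ is concave, $C^1$, and tends to $-\infty$ as $\tau\to\pm\infty$; hence its supremum is attained at a finite $\tau^*$, which shows $\hat G(s)<+\infty$ for every $s$, i.e.\ $\hat G$ is real-valued.

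For part (i), the inequality $\hat G(s)+G(\tau)\ge s\tau$ is immediate from the definition \eqref{dolt}, since $\hat G(s)\ge s\tau-G(\tau)$ for the particular $\tau$. For the equality case I would argue that $\hat G(s)+G(\tau)=s\tau$ means precisely that $\tau$ maximizes the concave $C^1$ function $h(\sigma)=s\sigma-G(\sigma)$. As this maximum is attained at a finite point, the first-order condition $h'(\tau)=s-g(\tau)=0$ holds, giving $g(\tau)=s$; conversely, if $g(\tau)=s$ then $h'(\tau)=0$, and concavity upgrades this critical point to a global maximum, so equality holds. This argument is insensitive to whether $g$ is strictly increasing.

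For part (ii), I would first check that $f$ is well-defined and real-valued. The growth hypotheses give $g(s)\to\pm\infty$ as $s\to\pm\infty$, so by the intermediate value theorem $g$ is surjective; since $g$ is continuous and increasing, each level set $\{\tau:g(\tau)=s\}$ is a nonempty closed bounded interval, and $f(s)$ is its (finite) left endpoint. Strict monotonicity of $f$ follows directly: if $s_1<s_2$ then $g(f(s_1))=s_1<s_2=g(f(s_2))$, and $f(s_1)\ge f(s_2)$ would force $s_1=g(f(s_1))\ge g(f(s_2))=s_2$ by monotonicity of $g$, a contradiction. The mechanism for the integral formula is the subgradient inequality: applying part (i) with the maximizer $\tau=f(s)$ gives $\hat G(s)=s\,f(s)-G(f(s))$, and then for any $s'$, using $\hat G(s')\ge s'f(s)-G(f(s))$ yields
\[
\hat G(s')\ge \hat G(s)+(s'-s)f(s).
\]

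Finally I would convert this into the stated representation. For $s_1<s_2$ the subgradient inequality, applied at both endpoints, gives the two-sided bound $f(s_1)(s_2-s_1)\le \hat G(s_2)-\hat G(s_1)\le f(s_2)(s_2-s_1)$. Telescoping over a partition $0=s_0<\cdots<s_n=s$ produces lower and upper Darboux sums $\sum_i f(s_{i-1})(s_i-s_{i-1})\le \hat G(s)-\hat G(0)\le \sum_i f(s_i)(s_i-s_{i-1})$ for the monotone (hence Riemann integrable) function $f$; letting the mesh tend to zero gives $\hat G(s)-\hat G(0)=\int_0^s f(\tau)\,d\tau$, and the case $s<0$ is handled symmetrically. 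Local Lipschitz continuity then follows from local boundedness of the increasing function $f$, and since a monotone function is continuous off a countable set, $\hat G'(s)=f(s)$ at every continuity point of $f$, i.e.\ almost everywhere. The only real subtlety I anticipate is the non-strict monotonicity of $g$, which makes the level sets $\{\tau:g(\tau)=s\}$ genuine intervals and $f$ merely monotone rather than continuous; this is exactly what forces the ``almost everywhere'' qualifier in the derivative identity, and care is needed to select $f(s)$ as the left endpoint so that the subgradient inequality and the Darboux-sum squeeze go through cleanly.
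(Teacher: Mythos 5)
Your proof is correct and complete: the quadratic growth of $G$ guarantees $\hat G$ is finite with the supremum attained, the equality case in (i) follows from the first-order condition for the concave differentiable function $\tau\mapsto s\tau-G(\tau)$, and the subgradient inequality $\hat G(s')\geq \hat G(s)+(s'-s)f(s)$ combined with the Darboux-sum squeeze for the monotone selection $f$ yields the integral representation, local Lipschitz continuity, and $\hat G'=f$ a.e. The paper itself states this lemma without proof, citing \cite{WGu1} (Lemma 2.3), and your argument is the standard convex-duality proof consistent with that source; you also correctly flag the one genuine subtlety, namely that $g$ is only non-strictly increasing, so the level sets are intervals and $f$ is merely monotone, which is exactly what the a.e.\ qualifier accommodates.
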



We recall two results from Burton's paper \cite{B1} concerning the set of rearrangements of a fixed function. Recall the definition \eqref{rwdef}.

\begin{lemma}[\cite{B1}, Theorem 6]\label{lem201}
Let $1< p< +\infty$ be fixed, and $\mathcal R_w$ be set of  rearrangements  of some $w\in L^{p}(D)$  on $D$. Let $\bar {\mathcal R}_w $ be the weak closure of $\mathcal R_{w} $ in $L^p(D).$ Then $\bar {\mathcal R}_{w}$ is convex, i.e., $\theta v_1+(1-\theta)v_2\in \bar{\mathcal R}_w$ whenever $v_1, v_2\in \bar{\mathcal R}_w$ and $\theta\in[0,1].$
\end{lemma}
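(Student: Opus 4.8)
The plan is to identify $\bar{\mathcal R}_w$ with a set cut out by a family of \emph{convex} constraints, after which convexity of $\bar{\mathcal R}_w$ is immediate. The constraints come from Hardy--Littlewood majorization. For $t\in(0,|D|]$ and $u\in L^p(D)$ put
\[
M_t(u)=\sup\left\{\int_A u\,dx : A\subseteq D \text{ measurable},\ |A|=t\right\}=\int_0^t u^*(s)\,ds,
\]
where $u^*$ denotes the decreasing rearrangement of $u$ on $(0,|D|)$. For each fixed $A$ the functional $u\mapsto\int_A u\,dx$ is linear and continuous on $L^p(D)$ (as $\mathbf{1}_A\in L^{p'}(D)$, $p'=p/(p-1)$), hence weakly continuous; therefore $M_t$, a supremum of such functionals, is \emph{convex} and weakly sequentially lower semicontinuous. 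The functional $u\mapsto\int_D u\,dx$ is weakly continuous for the same reason. Set
\[
S=\left\{v\in L^p(D): M_t(v)\le M_t(w)\ \forall\,t\in(0,|D|],\ \int_D v\,dx=\int_D w\,dx\right\}.
\]

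First I would establish the easy inclusion $\bar{\mathcal R}_w\subseteq S$. If $v\in\mathcal R_w$ then $v^*=w^*$, so $M_t(v)=M_t(w)$ for all $t$ and $\int_D v=\int_D w$; thus $\mathcal R_w\subseteq S$. Given $v\in\bar{\mathcal R}_w$, choose $v_n\in\mathcal R_w$ with $v_n\rightharpoonup v$ in $L^p(D)$. Weak lower semicontinuity of $M_t$ yields $M_t(v)\le\liminf_n M_t(v_n)=M_t(w)$, and testing the weak convergence against $\mathbf{1}_D\in L^{p'}(D)$ gives $\int_D v=\lim_n\int_D v_n=\int_D w$. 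Hence $v\in S$, proving $\bar{\mathcal R}_w\subseteq S$.

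The main obstacle is the reverse inclusion $S\subseteq\bar{\mathcal R}_w$, i.e.\ that majorization by $w$ is exactly the condition for lying in the weak closure of the orbit of rearrangements; this is the substantive content (Hardy--Littlewood--P\'olya in convex-functional form, and Ryff for the identification with the weak closure). I would argue it by first reducing, through an $L^p$-density and truncation argument, to the case where $w$ is a simple function, using that $M_t$ and $\int_D\cdot$ are norm-continuous. When $w$ takes finitely many values, the constraints defining $S$ translate, at the level of the measures of the level sets, into a doubly stochastic relation between $v$ and $w$, so the Birkhoff--von Neumann theorem exhibits the distribution of $v$ as a finite convex combination of rearrangements of $w$. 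It then suffices to show that any finite convex combination $\sum_k\theta_k r_k$ with $r_k\in\mathcal R_w$, $\theta_k\ge0$, $\sum_k\theta_k=1$, lies in $\bar{\mathcal R}_w$. This I would obtain by a partition-refinement construction: on ever finer subdivisions of $D$ one builds genuine rearrangements of $w$ whose local value-distributions approximate the $\theta_k$-weighted mixture of the $r_k$, so that, tested against any fixed $g\in L^{p'}(D)$, the local oscillations average out and the rearrangements converge weakly to $\sum_k\theta_k r_k$.

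With the identification $\bar{\mathcal R}_w=S$ in hand, convexity follows in one line. If $v_1,v_2\in\bar{\mathcal R}_w=S$ and $\theta\in[0,1]$, then convexity of each $M_t$ gives
\[
M_t\bigl(\theta v_1+(1-\theta)v_2\bigr)\le\theta M_t(v_1)+(1-\theta)M_t(v_2)\le M_t(w),
\]
while $\int_D\bigl(\theta v_1+(1-\theta)v_2\bigr)\,dx=\theta\int_D w\,dx+(1-\theta)\int_D w\,dx=\int_D w\,dx$; hence $\theta v_1+(1-\theta)v_2\in S=\bar{\mathcal R}_w$. I expect the weak-approximation step in the reverse inclusion --- realizing convex combinations of rearrangements as weak limits of genuine rearrangements through partition refinement --- to be the delicate point demanding the most care.
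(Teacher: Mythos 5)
The paper itself contains no proof of this lemma: it is imported verbatim from Burton \cite{B1} (Theorem 6), where convexity of $\bar{\mathcal R}_w$ comes precisely from the identification you are aiming at, namely that the weak closure of $\mathcal R_w$ equals the majorization set you call $S$ (this is the Ryff--Burton theory of rearrangement classes). So your overall strategy coincides with the cited source's, and your treatment of the easy inclusion $\bar{\mathcal R}_w\subseteq S$ (weak lower semicontinuity of $M_t$ as a supremum of weakly continuous linear functionals, weak continuity of $u\mapsto\int_D u\,dx$) is complete and correct.

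The difficulty is that all the substance lies in the reverse inclusion $S\subseteq\bar{\mathcal R}_w$, and there your argument is a plan rather than a proof, with at least one step that fails as written. First, Birkhoff--von Neumann cannot be invoked the way you describe: even when $w$ is simple, an element $v\in S$ need not be simple, so there is no finite doubly stochastic matrix relating ``the measures of the level sets'' of $v$ and $w$; you would need either Ryff's representation of majorization via doubly stochastic \emph{operators}, or a preliminary approximation of $v$ by simple functions inside $S$ (conditional expectations onto finite partitions work, since averaging preserves majorization), and neither appears in your text. Second, the reduction to simple $w$ is not a soft density argument: from $v\in\bar{\mathcal R}_{w_n}$ and $w_n\to w$ in $L^p$ you must manufacture rearrangements of $w$ itself that are weakly close to $v$, which requires a uniform comparison of $\mathcal R_{w_n}$ with $\mathcal R_w$ --- e.g.\ via Ryff's theorem writing $u=u^*\circ\sigma$ with $\sigma$ measure preserving, combined with the contraction $\|w_n^*-w^*\|_{L^p(0,|D|)}\le\|w_n-w\|_{L^p(D)}$ --- again genuine machinery that is not supplied. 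Third, the step you yourself flag as delicate, that finite convex combinations of rearrangements lie in $\bar{\mathcal R}_w$, is described in one sentence and never carried out. Finally, note that for the lemma as stated the full identification $\bar{\mathcal R}_w=S$ is overkill: once you know $\mathrm{conv}(\mathcal R_w)\subseteq\bar{\mathcal R}_w$ (your mixing step), it follows that $\bar{\mathcal R}_w$ equals the weak closure of $\mathrm{conv}(\mathcal R_w)$, which is convex because the weak closure of a convex set in a topological vector space is convex; no majorization, no Birkhoff, and no simple-function reduction are needed. In other words, the single step you postponed is the only step actually required --- and it is the one that is missing.
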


\begin{lemma}[\cite{B1}, Theorem  4]\label{lem202}
Let $1< p<+\infty$ be fixed. Denote by $p^*$ the H\"older conjugate of $p$, i.e., $p^*=p/(p-1)$.  Let $\mathcal R_{v_0},$ $\mathcal R_{w_0}$ be sets of  rearrangements  of some $v_{0}\in L^p(D)$ and some $w_{0}\in L^{q}(D)$ on $D$, respectively. Then for any $\tilde w\in\mathcal R_{w_0},$ there exists $\tilde v\in \mathcal R_{v_0}$, such that
\[\int_D \tilde v \tilde wdx\geq \int_{D}vwdx,\quad\forall\, v\in {\mathcal R}_{v_0},\,\,w\in {\mathcal R}_{w_0}.\]
\end{lemma}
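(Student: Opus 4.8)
The plan is to realize the right-hand side as a genuine supremum that is independent of the particular representatives, and then to build $\tilde v$ so that the pair $(\tilde v,\tilde w)$ saturates it. Denote by $v_0^{\downarrow}$ and $w_0^{\downarrow}$ the decreasing rearrangements of $v_0$ and $w_0$ onto the interval $(0,|D|)$. The first step is the Hardy--Littlewood inequality: for every $v\in\mathcal R_{v_0}$ and $w\in\mathcal R_{w_0}$,
\[\int_D vw\,dx\le \int_0^{|D|}v_0^{\downarrow}(s)\,w_0^{\downarrow}(s)\,ds=:M\]
(both integrals are finite by H\"older's inequality, since $p$ and $p^*$ are conjugate). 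I would prove this by the layer-cake representation, reducing to the elementary estimate $\int_D\chi_A\chi_B\,dx\le\min(|A|,|B|)$ for the super-level sets, with equality exactly when the sets are nested; the value $M$ is the one produced by the nested (comonotone) arrangement. This shows $M$ is a uniform upper bound for the whole family of pairs, so it suffices to produce $\tilde v\in\mathcal R_{v_0}$ with $\int_D\tilde v\,\tilde w\,dx=M$.

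Next I would construct $\tilde v$ by coupling $v_0$ to $\tilde w$ through the order structure of $\tilde w$. The idea is to build a measure-preserving map $\sigma\colon D\to(0,|D|)$ that sends points to their ``rank'' under $\tilde w$ (largest values of $\tilde w$ to smallest $s$), and then to set $\tilde v:=v_0^{\downarrow}\circ\sigma$. Away from the level sets of positive measure of $\tilde w$ one simply takes $\sigma(x)=|\{y\in D:\tilde w(y)>\tilde w(x)\}|$, which already yields $w_0^{\downarrow}(\sigma(x))=\tilde w(x)$. Because $\sigma$ is measure-preserving and $v_0^{\downarrow}$ is a rearrangement of $v_0$ on the interval, $\tilde v=v_0^{\downarrow}\circ\sigma$ lies in $\mathcal R_{v_0}$, and since $\tilde w=w_0^{\downarrow}\circ\sigma$ as well,
\[\int_D\tilde v\,\tilde w\,dx=\int_D (v_0^{\downarrow}\circ\sigma)(w_0^{\downarrow}\circ\sigma)\,dx=\int_0^{|D|}v_0^{\downarrow}(s)\,w_0^{\downarrow}(s)\,ds=M,\]
which together with the first step finishes the proof.

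The main obstacle is the handling of the level sets $L_c=\{x\in D:\tilde w(x)=c\}$ of positive measure, on which the naive formula for $\sigma$ is constant and hence not measure-preserving. On each such plateau $\tilde w\equiv c$ the value of $v$ no longer interacts with the ordering, so I must spread the points of $L_c$ over the interval $\bigl(|\{\tilde w>c\}|,\,|\{\tilde w\ge c\}|\bigr)$ in a measure-preserving fashion; there $w_0^{\downarrow}$ is identically $c$, so the identity $\tilde w=w_0^{\downarrow}\circ\sigma$ survives regardless of how the spreading is carried out. The existence of such a measure-preserving rearrangement of $L_c$ follows from the standard fact that a non-atomic finite Lebesgue space is measure-isomorphic, modulo null sets, to an interval of the same length. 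Patching these plateau maps together with the formula above produces a genuine measure-preserving $\sigma$ on all of $D$, and the computation of $\int_D\tilde v\,\tilde w\,dx$ goes through unchanged. I expect this measure-theoretic bookkeeping on the plateaus, rather than the inequality itself, to be the delicate point.
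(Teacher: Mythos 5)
The paper itself gives no proof of this lemma; it is quoted directly from Burton [B1, Theorem 4], so there is no internal argument to compare with, and your proof must be judged on its own merits. It is correct, and it is essentially the classical argument behind Burton's result: the Hardy--Littlewood inequality supplies the uniform bound $M=\int_0^{|D|}v_0^{\downarrow}w_0^{\downarrow}\,ds$ over all pairs of rearrangements, and your rank-map construction of a measure-preserving $\sigma$ with $\tilde w=w_0^{\downarrow}\circ\sigma$ (plateaus spread over their target intervals by measure isomorphisms) is precisely Ryff's theorem, after which $\tilde v=v_0^{\downarrow}\circ\sigma\in\mathcal R_{v_0}$ attains the bound. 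Two details deserve explicit care in a full write-up, though neither is a genuine gap: first, Hardy--Littlewood for \emph{signed} $L^p\times L^{p^*}$ pairs (your indicator-function sketch is the nonnegative case; on a finite measure space one reduces to it by truncating and adding constants, using that $\int_D v\,dx$ and $\int_D w\,dx$ are rearrangement invariants); second, the identity $w_0^{\downarrow}(\sigma(x))=\tilde w(x)$ off the plateaus can fail at values of $\tilde w$ sitting at the bottom of a gap in its essential range, but only on a null set of $x$, since the corresponding constancy intervals of the distribution function carry no mass.
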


Lemmas \ref{lem201} and \ref{lem202} will be used in Section \ref{compac}.

For an ideal fluid of unit density in $D$, the kinetic energy   in terms of the velocity $\mathbf v$ is
\[\frac{1}{2}\int_{D}|\mathbf v|^{2}dx.\]
In the following proposition, we show that the kinetic energy  can be expressed in terms of the vorticity and the circulation vector.

   \begin{proposition}\label{propc3}
  Let $D\subset\mathbb R^{2}$ be a  multiply-connected  domain of the form \eqref{dform}.   Consider an Euler flow with velocity $\mathbf v$.
 Let $\omega$ be the corresponding vorticity, and $\mathbf a$ be the corresponding circulation vector, i.e., $\mathbf a=(a_1,\cdot\cdot\cdot,a_N)$ with $a_i$ given by
   \[a_i=\int_{\Gamma_i}\mathbf v\cdot d\mathbf r,\quad i=1,\cdot\cdot\cdot,N.\]
Then the kinetic energy of the fluid can be expressed in terms of $\omega$ and $\mathbf a$ as follows:

\begin{equation}\label{kevv0}
E(\omega,\mathbf a)=\frac{1}{2}\int_{D}\omega\mathcal P\omega dx+\int_{D}h_{\mathbf a}\omega dx+\frac{1}{2}\sum_{i,j=1}^{N}q_{ij}a_{i}a_{j}.
\end{equation}
where $\mathcal P$ and $h_{\mathbf a}$ are given in \eqref{biosaa}.
\end{proposition}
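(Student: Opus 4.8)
The plan is to reduce the kinetic energy to the Dirichlet integral of the stream function and then split the stream function into its vorticity part and its circulation part. Since $\mathbf v=\nabla^{\perp}\psi$ with $\psi=\mathcal P\omega+h_{\mathbf a}$ by \eqref{biosaa}, and $|\nabla^{\perp}\psi|=|\nabla\psi|$, the kinetic energy is $\frac12\int_{D}|\nabla\psi|^{2}\,dx$. I would set $\psi_{\omega}:=\mathcal P\omega$ and $\psi_{\mathbf a}:=h_{\mathbf a}$, so that expanding $\frac12\int_{D}|\nabla(\psi_{\omega}+\psi_{\mathbf a})|^{2}\,dx$ produces exactly three pieces: the self-energy $\frac12\int_{D}|\nabla\psi_{\omega}|^{2}\,dx$, the cross term $\int_{D}\nabla\psi_{\omega}\cdot\nabla\psi_{\mathbf a}\,dx$, and the self-energy $\frac12\int_{D}|\nabla\psi_{\mathbf a}|^{2}\,dx$. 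The goal is to identify these with the three summands on the right-hand side of \eqref{kevv0}. The main tool throughout is Green's first identity, which is legitimate here because the stream function is of class $C^{2}(\bar D)$.

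For the first piece, I note that $\psi_{\omega}=\mathcal P\omega$ is precisely the solution of \eqref{stb02} with $\mathbf a=0$ (by Proposition \ref{propc2}, since $h_{\mathbf a}$ vanishes when $\mathbf a=0$); hence $-\Delta\psi_{\omega}=\omega$, $\psi_{\omega}=0$ on $\Gamma_{0}$, $\psi_{\omega}$ is constant on each inner $\Gamma_{i}$, and $\int_{\Gamma_{i}}\nabla\psi_{\omega}\cdot d\mathbf n=0$. Integrating by parts, the boundary contribution over $\Gamma_{0}$ vanishes because $\psi_{\omega}=0$ there, while over each $\Gamma_{i}$ the constant value of $\psi_{\omega}$ factors out of the integral and is multiplied by the vanishing net flux. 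Thus $\int_{D}|\nabla\psi_{\omega}|^{2}\,dx=-\int_{D}\psi_{\omega}\Delta\psi_{\omega}\,dx=\int_{D}\omega\,\mathcal P\omega\,dx$, which is the first term of \eqref{kevv0}.

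For the circulation piece, I would write $h_{\mathbf a}=\sum_{k=1}^{N}\beta_{k}\zeta_{k}$ with $\beta_{k}=-\sum_{j}q_{jk}a_{j}$, so that by \eqref{zet2n} one has $\int_{D}|\nabla h_{\mathbf a}|^{2}\,dx=\sum_{k,l}\beta_{k}\beta_{l}p_{kl}$. In matrix form this is $\beta^{\mathsf T}p\,\beta$ with $\beta=-q\mathbf a$, and since $(q_{ij})$ is the inverse of $(p_{ij})$ we have $qpq=q$, whence $\beta^{\mathsf T}p\,\beta=\mathbf a^{\mathsf T}q\,\mathbf a=\sum_{i,j}q_{ij}a_{i}a_{j}$; this gives the last term of \eqref{kevv0}. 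For the cross term, integration by parts together with $-\Delta\psi_{\omega}=\omega$ yields $\int_{D}\nabla\psi_{\omega}\cdot\nabla h_{\mathbf a}\,dx=\int_{D}h_{\mathbf a}\omega\,dx$ plus a boundary integral; the latter vanishes because $h_{\mathbf a}=0$ on $\Gamma_{0}$ (as $\zeta_{k}|_{\Gamma_{0}}=0$) and because on each $\Gamma_{i}$ the function $h_{\mathbf a}$ is constant and is paired with the zero net flux of $\psi_{\omega}$, exactly as in the first step. Adding the three contributions gives \eqref{kevv0}.

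The step I expect to be the main obstacle is the careful bookkeeping of the boundary integrals over the inner components $\Gamma_{1},\dots,\Gamma_{N}$: the whole argument rests on the facts that both $\mathcal P\omega$ and $h_{\mathbf a}$ are constant on each $\Gamma_{i}$ and that the zero-circulation property $\int_{\Gamma_{i}}\nabla\mathcal P\omega\cdot d\mathbf n=0$ annihilates every boundary term in which $\mathcal P\omega$ is the differentiated factor. Keeping the sign conventions consistent — the outward normal $\mathbf n$, the relation $\int_{\Gamma_{i}}\nabla\psi\cdot d\mathbf n=-a_{i}$, and the definition of $\beta_{k}$ — is the only delicate point; once these are pinned down, the algebraic identity $qpq=q$ handles the remainder routinely.
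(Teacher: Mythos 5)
Your proof is correct and takes essentially the same route as the paper: both reduce the kinetic energy to $\tfrac12\int_D|\nabla(\mathcal P\omega+h_{\mathbf a})|^2\,dx$ via Proposition \ref{propc2}, expand the square into the same three terms, and conclude with the matrix identity relating $(p_{ij})$ and $(q_{ij})$. The only difference is bookkeeping: where you re-derive $\int_D|\nabla\mathcal P\omega|^2\,dx=\int_D\omega\,\mathcal P\omega\,dx$ and $\int_D\nabla\mathcal P\omega\cdot\nabla h_{\mathbf a}\,dx=\int_D h_{\mathbf a}\omega\,dx$ inline by Green's identity and the boundary conditions of $\mathcal P\omega$, the paper cites Lemma \ref{propb2}(i),(iii), which encode exactly these facts.
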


\begin{proof}
By  Proposition \ref{propc2}, we have
\[\mathbf v=\nabla^\perp(\mathcal P\omega+h_{\mathbf a}).\]
Therefore the kinetic energy can be written as
\begin{equation}\label{kevvt}
 \frac{1}{2}\int_{D}|\mathbf v|^{2}dx=\frac{1}{2}\int_{D}|\nabla(\mathcal P\omega+h_{\mathbf a})|^{2}dx.
\end{equation}
We perform the following computation
 \begin{align*}
& \frac{1}{2}\int_{D}|\nabla(\mathcal P\omega+h_{\mathbf a})|^{2}dx \\
= &\frac{1}{2}\int_{D}|\nabla \mathcal P\omega|^{2}dx+\int_{D}\nabla \mathcal P\omega\cdot\nabla h_{\mathbf a} dx+\frac{1}{2}\int_{D}|\nabla h_{\mathbf a}|^{2}dx\\
= &\frac{1}{2}\int_{D}\omega\mathcal P\omega dx+\sum_{i,j=1}^{N}\left(q_{ij}a_{i}\int_{D}\nabla \mathcal P\omega\cdot\nabla \zeta_{j} dx\right)+\frac{1}{2}\int_{D}|\nabla h_{\mathbf a}|^{2}dx\\
= &\frac{1}{2}\int_{D}\omega\mathcal P\omega dx+\sum_{i,j=1}^{N}\left(q_{ij}a_{i}\int_{D}\omega  \zeta_{j} dx\right)+\frac{1}{2}\sum_{i,j,k,l=1}^{N}\left(q_{ij}q_{kl}a_{i}a_{k}\int_{D}\nabla\zeta_{j}\cdot\nabla\zeta_{l}dx\right)\\
= &\frac{1}{2}\int_{D}\omega\mathcal P\omega dx+ \int_{D}h_{\mathbf a}\omega   dx +\frac{1}{2}\sum_{i,j,k,l=1}^{N} p_{jl}q_{ij}q_{kl}a_{i}a_{k}\\
= &\frac{1}{2}\int_{D}\omega\mathcal P\omega dx+ \int_{D}h_{\mathbf a}\omega   dx +\frac{1}{2}\sum_{i,j=1}^{N}  q_{ij}a_{i}a_{j}.
 \end{align*}
Hence the proof is finished.
\end{proof}

Note that $E$, given by \eqref{kevv0}, is well-defined in $L^p(D)\times \mathbb R^N$ for any $1<p<+\infty$. The following two lemmas concerning the weak continuity and local Lipschitz continuity of  $E$ will be used in Sections \ref{compac} and \ref{nonlin}.

\begin{lemma}\label{lipcoe}
Let $1<p<+\infty$ be fixed.
If $w_{n}\rightharpoonup w$ in $L^{p}(D)$ and $\mathbf a_{n}\to\mathbf a$ in $\mathbb R^{N},$ then
\[\lim_{n\to+\infty}E(w_{n},\mathbf a_{n})=E(w,\mathbf a).\]
Here and henceforth, $``\rightharpoonup"$ means weak convergence.
\end{lemma}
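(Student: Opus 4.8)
The plan is to prove the convergence termwise, treating the three summands of $E$ in \eqref{kevv0} separately; only the quadratic term $\frac12\int_D\omega\,\mathcal P\omega\,dx$ is genuinely nontrivial. First I record that $w_n\rightharpoonup w$ in $L^p(D)$ forces $\sup_n\|w_n\|_{L^p(D)}<\infty$ by the uniform boundedness principle; this bound is used repeatedly. The finite-dimensional summand $\frac12\sum_{i,j}q_{ij}a_ia_j$ is a fixed quadratic polynomial in $\mathbf a$, hence continuous, and converges under $\mathbf a_n\to\mathbf a$ with no further work. For the cross term, recall from \eqref{biosaa0} that $h_{\mathbf a_n}=-\sum_{i,j}q_{ij}(a_n)_i\zeta_j$, so $h_{\mathbf a_n}\to h_{\mathbf a}$ in $C(\bar D)$, and hence in $L^{p^*}(D)$ with $p^*=p/(p-1)$, because each $\zeta_j\in C^\infty(\bar D)$ and $\mathbf a_n\to\mathbf a$. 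Splitting
\[
\int_D h_{\mathbf a_n}w_n\,dx-\int_D h_{\mathbf a}w\,dx=\int_D(h_{\mathbf a_n}-h_{\mathbf a})w_n\,dx+\int_D h_{\mathbf a}(w_n-w)\,dx,
\]
the first integral is bounded by $\|h_{\mathbf a_n}-h_{\mathbf a}\|_{L^{p^*}(D)}\|w_n\|_{L^p(D)}\to0$ via H\"older and the uniform bound, while the second tends to $0$ since $w_n\rightharpoonup w$ in $L^p(D)$ tested against the fixed function $h_{\mathbf a}\in L^{p^*}(D)$.

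The heart of the argument, and the only real obstacle, is the quadratic term, for which I would establish that $\mathcal P$ sends weakly convergent sequences in $L^p(D)$ to \emph{strongly} convergent sequences in $L^{p^*}(D)$; equivalently, that $\mathcal P\colon L^p(D)\to L^{p^*}(D)$ is compact. Using the decomposition \eqref{biosaa0}, the finite-rank part $\sum_{i,j}q_{ij}\big(\int_D\zeta_i w_n\,dx\big)\zeta_j$ converges strongly, because each coefficient $\int_D\zeta_i w_n\,dx\to\int_D\zeta_i w\,dx$ (again testing $w_n\rightharpoonup w$ against $\zeta_i\in L^{p^*}(D)$) while the $\zeta_j$ are fixed. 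For the principal part $\mathcal Gw_n$, elliptic regularity for the Dirichlet problem \eqref{biosa10} yields $\|\mathcal Gw_n\|_{W^{2,p}(D)}\le C\|w_n\|_{L^p(D)}$, so $\{\mathcal Gw_n\}$ is bounded in $W^{2,p}(D)$; since $D$ is a bounded smooth planar domain and $2p>2$, the embedding $W^{2,p}(D)\hookrightarrow L^{p^*}(D)$ is compact, so some subsequence of $\mathcal Gw_n$ converges strongly in $L^{p^*}(D)$. The strong limit must coincide with $\mathcal Gw$, because $\mathcal G$ is a bounded linear operator and therefore weakly continuous, giving $\mathcal Gw_n\rightharpoonup\mathcal Gw$; uniqueness of the limit then upgrades the convergence to the full sequence. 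Combining the two parts gives $\mathcal Pw_n\to\mathcal Pw$ strongly in $L^{p^*}(D)$.

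Finally I would close the quadratic term by the same strong--weak splitting,
\[
\int_D w_n\mathcal Pw_n\,dx-\int_D w\mathcal Pw\,dx=\int_D w_n(\mathcal Pw_n-\mathcal Pw)\,dx+\int_D(w_n-w)\mathcal Pw\,dx,
\]
where the first integral is $O\big(\|w_n\|_{L^p(D)}\|\mathcal Pw_n-\mathcal Pw\|_{L^{p^*}(D)}\big)\to0$ and the second vanishes in the limit since $\mathcal Pw\in L^{p^*}(D)$ is fixed and $w_n\rightharpoonup w$. Adding the three convergences yields $E(w_n,\mathbf a_n)\to E(w,\mathbf a)$. Thus the entire difficulty is concentrated in the compactness of $\mathcal P$, i.e.\ in coupling $L^p$ elliptic regularity for $\mathcal G$ with the Rellich--Kondrachov embedding in dimension two; the cross and finite-dimensional terms need only H\"older's inequality and the definition of weak convergence.
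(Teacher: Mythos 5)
Your proof is correct and takes essentially the same approach as the paper: both hinge on elliptic regularity plus a compact Sobolev embedding to upgrade the weak convergence of $w_n$ to strong convergence of $\mathcal Pw_n$, and then close the quadratic and cross terms by the weak--strong splitting. The only cosmetic differences are that the paper lands in $C(\bar D)$ rather than $L^{p^*}(D)$ and invokes the boundedness of $\mathcal P$ as a whole (Lemma \ref{eiyoyo}(i)) instead of splitting off the finite-rank part and running a subsequence argument for $\mathcal G w_n$.
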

\begin{proof}
Denote $\mathbf a_n=(a_{n,1},\cdot\cdot\cdot,a_{n,N})$ and $\mathbf a=(a_1,\cdot\cdot\cdot,a_N).$
Recall the definition  \eqref{kevv0}, we have that
\begin{equation}\label{wce1}
E(w_n,\mathbf a_n)=\frac{1}{2}\int_{D}w_n\mathcal Pw_n dx+\int_{D}h_{\mathbf a_n}w_n dx+\frac{1}{2}\sum_{i,j=1}^{N}q_{ij}a_{n,i}a_{n,j}.
\end{equation}
Since $\mathcal P$ is bounded from $L^{p}(D)$ to $W^{2,p}(D)$ (see Lemma \ref{eiyoyo}(i)), we see $\mathcal Pw_n\rightharpoonup \mathcal Pw$ in $W^{2,p}(D)$ as $n\to+\infty.$ By the fact that $W^{2,p}(D)$ is compactly embedded in $C(\bar D),$  we have that $\mathcal Pw_n\to \mathcal Pw$ in $C(\bar D)$  as $n\to+\infty.$ Therefore as $n\to+\infty$
\begin{equation}\label{wce2}\int_{D}w_n\mathcal Pw_n dx\to \int_{D}w\mathcal Pw dx.
\end{equation}
On the other hand, it is clear that $h_{\mathbf a_n}\to h_{\mathbf a}$ in $C(\bar D)$ as $n\to+\infty$, which implies
\begin{equation}\label{wce3}\lim_{n\to+\infty}\int_{D}h_{\mathbf a_n}w_n dx=\int_{D}h_{\mathbf a}w dx.
\end{equation}
For the third term, it is obvious that
\begin{equation}\label{wce4}
\lim_{n\to+\infty}\sum_{i,j=1}^{N}q_{ij}a_{n,i}a_{n,j}=\sum_{i,j=1}^{N}q_{ij}a_{i}a_{j}.
\end{equation}
 The assertion follows from \eqref{wce2}-\eqref{wce4} immediately.
\end{proof}

\begin{lemma}\label{lipcoe}
Let $1<p<+\infty$ be fixed.
Let $K$ be a bounded set of $L^{p}(D)$, and $I $ be a bounded set of $\mathbb R^{N}$. Then there exists some positive number $C$, depending only on $K, I, p$ and $D$, such that
\begin{equation}\label{lipc3}
|E(v,\mathbf a)-E(w,\mathbf b)|\leq C(\|v-w\|_{L^{p}(D)}+|\mathbf a-\mathbf b|),\quad \forall\, v,w\in K,\,\mathbf a, \mathbf b\in I.
\end{equation}
\end{lemma}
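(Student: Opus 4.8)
The plan is to estimate the three terms of $E$ separately, using the explicit formula \eqref{kevv0} together with the bilinear (resp. quadratic) structure of each term. Set $M_K=\sup_{u\in K}\|u\|_{L^p(D)}$ and $M_I=\sup_{\mathbf c\in I}|\mathbf c|$, both finite by hypothesis. I would then show that each of the three differences is controlled by a constant depending only on $M_K,M_I,p,D$ times $\|v-w\|_{L^p(D)}+|\mathbf a-\mathbf b|$, and add up. The guiding principle is that every term reduces to a Hölder estimate once one knows that $\mathcal P u$ lands in a space paired with $L^p(D)$.

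For the leading quadratic term $\tfrac12\int_D(v\mathcal P v-w\mathcal P w)\,dx$, I would use the telescoping identity
\[
\int_D(v\mathcal P v-w\mathcal P w)\,dx=\int_D (v-w)\mathcal P v\,dx+\int_D w\,\mathcal P(v-w)\,dx,
\]
which rests only on the linearity of $\mathcal P$ and avoids any appeal to self-adjointness. The crucial input is Lemma \ref{eiyoyo}(i), namely that $\mathcal P$ is bounded from $L^p(D)$ into $W^{2,p}(D)$; since $p>1$, the two-dimensional Sobolev embedding gives $W^{2,p}(D)\hookrightarrow C(\bar D)\hookrightarrow L^{p^*}(D)$ with $p^*=p/(p-1)$, so $\|\mathcal P u\|_{L^{p^*}(D)}\leq C\|u\|_{L^p(D)}$. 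Hölder's inequality then bounds each piece, e.g. $|\int_D(v-w)\mathcal P v\,dx|\leq \|v-w\|_{L^p(D)}\|\mathcal P v\|_{L^{p^*}(D)}\leq C M_K\|v-w\|_{L^p(D)}$, and likewise the second piece.

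For the cross term $\int_D(h_{\mathbf a}v-h_{\mathbf b}w)\,dx$, I would split it as $\int_D h_{\mathbf a}(v-w)\,dx+\int_D(h_{\mathbf a}-h_{\mathbf b})w\,dx$ and exploit that $h_{\mathbf a}=-\sum_{i,j=1}^N q_{ij}a_i\zeta_j$ is linear in $\mathbf a$ with $\zeta_j\in C^\infty(\bar D)$, whence $\|h_{\mathbf c}\|_{L^{p^*}(D)}\leq C|\mathbf c|$ and $h_{\mathbf a}-h_{\mathbf b}=h_{\mathbf a-\mathbf b}$. Hölder again yields a bound $C\bigl(M_I\|v-w\|_{L^p(D)}+M_K|\mathbf a-\mathbf b|\bigr)$. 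Finally the finite-dimensional term $\tfrac12\sum_{i,j=1}^N q_{ij}(a_ia_j-b_ib_j)$ is a quadratic polynomial evaluated on the bounded set $I$, hence Lipschitz there, giving a bound $C|\mathbf a-\mathbf b|$ with $C$ depending on $(q_{ij})$ and $M_I$. Summing the three estimates and collecting constants produces \eqref{lipc3}.

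This argument is a short sequence of routine Hölder estimates, so I do not anticipate a serious obstacle; the only point demanding care is ensuring that $\mathcal P u$ belongs to a space dual to $L^p(D)$, which is precisely where the regularizing property of $\mathcal P$ (Lemma \ref{eiyoyo}(i)) together with the embedding $W^{2,p}(D)\hookrightarrow L^\infty(D)$ for $p>1$ come in. One should also check that all constants depend only on $K,I,p,D$; this is immediate, since they enter only through $M_K$, $M_I$, $|D|$, and the fixed data $\zeta_j,q_{ij}$.
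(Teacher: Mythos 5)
Your proof is correct and follows essentially the same route as the paper's: the same telescoping decomposition of the quadratic term (your $w\,\mathcal P(v-w)$ is identical to the paper's $(\mathcal Pv-\mathcal Pw)w$ by linearity), the same use of Lemma \ref{eiyoyo}(i) plus the Sobolev embedding to place $\mathcal P u$ and $h_{\mathbf c}$ in $L^{p^*}(D)$ for H\"older, and the same Lipschitz bound on the finite-dimensional quadratic form. The only differences are cosmetic (which factor carries the difference in each split), so there is nothing to add.
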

\begin{proof}
Still denote by $p^{*}$   the H\"older conjugate of $p$. By the fact that $\mathcal P$ is bounded from $L^{p}(D)$ to $W^{2,p}(D)$, and the fact that $W^{2,p}(D)$ is embedded in $L^{p^*}(D)$, we deduce that
there exists some $C_{0}>0,$ depending only on  $p$ and $D$, such that
\begin{equation}
\|\mathcal Pv\|_{L^{p^{*}}(D)}\leq C_{0}\|v\|_{L^{p}(D)},\,\,\forall\,v\in L^{p}(D).
\end{equation}
Recalling the definition \eqref{kevv0}, for any $v,w\in K$ and $\mathbf a,\mathbf b\in I$,   we estimate $|E(v,\mathbf a)-E(w,\mathbf b)|$ as follows:
\begin{align*}
&|E(v,\mathbf a)-E(w,\mathbf b)|\\
=&\bigg|\frac{1}{2}\int_{D}v\mathcal Pv-w\mathcal Pwdx+\int_{D}h_{\mathbf a}v-h_{\mathbf b}wdx+\frac{1}{2}\sum_{i,j=1}^{N}q_{ij}(a_{i}a_{j}-b_ib_j)\bigg|\\
=&\bigg|\frac{1}{2}\int_{D}(v-w)\mathcal Pv+(\mathcal Pv-\mathcal Pw)wdx+\int_{D}(h_{\mathbf a}-h_{\mathbf b})v+h_{\mathbf b}(v-w)dx\\
&+\sum_{i,j=1}^Nq_{ij}(a_i+b_i)(a_j-b_j)\bigg|\\
\leq& \frac{1}{2}\|v-w\|_{L^{p}(D)}\|\mathcal Pv\|_{L^{p^*}(D)}+ \frac{1}{2}\|\mathcal Pv-\mathcal Pw\|_{L^{p^{*}}(D)}\|w\|_{L^{p}(D)}
+\|h_{\mathbf a}-h_{\mathbf b}\|_{L^{p^{*}}(D)}\|v\|_{L^{p}(D)}\\
&+\|h_{\mathbf b}\|_{L^{p^{*}}(D)}\|v-w\|_{L^{p}(D)}+\sum_{i,j=1}^N|q_{ij}||a_i+b_i||a_j-b_j|\\
\le&\frac{1}{2}C_{0}(\|v\|_{L^{p}(D)}+\|w\|_{L^{p}(D)}) \|v-w\|_{L^{p}(D)}+\sum_{i,j=1}^{N}\|q_{ij}(a_{i}-b_{i})\zeta_{j}\|_{L^{p^{*}}(D)}\|v\|_{L^{p}(D)}\\
&+\sum_{i,j=1}^{N}\|q_{ij}b_{i}\zeta_{j}\|_{L^{p^{*}}(D)}\|v-w\|_{L^{p}(D)}+\left(\sum_{i,j=1}^N|q_{ij}||a_i+b_i|\right)\left(\sum_{j=1}^N|a_j-b_j|\right).
\end{align*}
Then the claimed estimate \eqref{lipc3} follows from the following obvious facts:
\[\|v\|_{L^p(D)}\leq C_1 \quad \forall\, v\in K,\quad \sum_{i,j=1}^{N}\|q_{ij}(a_{i}-b_{i})\zeta_{j}\|_{L^{p^{*}}(D)} \leq C_{2}|\mathbf a-\mathbf b| \quad \forall\, \mathbf a, \mathbf b\in I.\]
\[\sum_{i,j=1}^{N}\|q_{ij}b_{i}\zeta_{j}\|_{L^{p^{*}}(D)}\leq C_3 \quad \forall\, \mathbf b\in I,\quad\sum_{i,j=1}^Nq_{ij}|a_i+b_i|\leq C_4 \quad \forall \,\mathbf a, \mathbf b\in I.\]
where $C_{1}, C_2, C_3, C_4$ are positive constants depending only on $K, I, p$ and $D$.
\end{proof}

We have shown in Section 1  that the motion of a fluid can be fully described by the velocity field, or the stream function, or the vorticity together with the circulation vector. The following lemma shows that these three descriptions correspond to three equivalent norms when characterizing nonlinear stability.

\begin{lemma}\label{propd1}
Let $D\subset\mathbb R^{2}$ be a bounded multiply-connected smooth domain with the form \eqref{dform}.
Let $\psi\in C^{2}(\bar D)$  satisfy
\begin{equation}\label{semim00}
\begin{cases}
-\Delta \psi=\omega,&x\in D,\\
\psi=0,&x\in\Gamma_{0},\\
\psi \mbox{ is constant on }\Gamma_{i},& i=1,\cdot\cdot\cdot,N,\\
\int_{\Gamma_{i}}\nabla \psi\cdot d \mathbf n=-a_{i},& i=1,\cdot\cdot\cdot,N,
\end{cases}
\end{equation}
where $\omega\in C(\bar D),$ $a_{1},\cdot\cdot\cdot,a_{N}\in \mathbb R.$ Let $\mathbf v=\nabla^{\perp}\psi.$ Then for any $1<p<+\infty,$  it holds that
\begin{equation}\label{appd1}
 \|\psi\|_{W^{2,p}(D)}\lesssim \|\omega\|_{L^{p}(D)}+\sum_{i=1}^{N}|a_{i}|\lesssim  \|\psi\|_{W^{2,p}(D)},
\end{equation}
\begin{equation}\label{appd2}
 \|\psi\|_{W^{2,p}(D)}\lesssim \|\mathbf v\|_{W^{1,p}(D)} \lesssim\|\psi\|_{W^{2,p}(D)}.
\end{equation}
Here $A\lesssim B$ means $A\leq CB$ for some positive constant $C$ depending only on $p$ and $D$.
\end{lemma}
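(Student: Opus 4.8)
The plan is to prove the two chains of inequalities \eqref{appd1} and \eqref{appd2} separately, relying on the explicit Biot--Savart representation of $\psi$ for the first and on the pointwise identity $\mathbf v=\nabla^{\perp}\psi$ for the second. Both amount to assembling standard elliptic estimates together with the structural information already recorded in (N1)--(N5).

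For \eqref{appd1}, I would first invoke Proposition \ref{propc2}, which guarantees that the unique solution of \eqref{semim00} is $\psi=\mathcal P\omega+h_{\mathbf a}$ with $h_{\mathbf a}=-\sum_{i,j=1}^{N}q_{ij}a_{i}\zeta_{j}$. To obtain the left inequality $\|\psi\|_{W^{2,p}(D)}\lesssim\|\omega\|_{L^{p}(D)}+\sum_{i}|a_{i}|$, I would estimate the two summands: the boundedness of $\mathcal P$ from $L^{p}(D)$ into $W^{2,p}(D)$ (Lemma \ref{eiyoyo}(i)) yields $\|\mathcal P\omega\|_{W^{2,p}(D)}\lesssim\|\omega\|_{L^{p}(D)}$, while $h_{\mathbf a}$ is a fixed linear combination of the smooth, $\mathbf a$-independent profiles $\zeta_{j}$, so that $\|h_{\mathbf a}\|_{W^{2,p}(D)}\lesssim\sum_{i}|a_{i}|$ with a constant depending only on the $q_{ij}$ and the norms $\|\zeta_{j}\|_{W^{2,p}(D)}$. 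For the reverse inequality, $\|\omega\|_{L^{p}(D)}=\|\Delta\psi\|_{L^{p}(D)}\le\|\psi\|_{W^{2,p}(D)}$ is immediate, and each circulation is controlled through its defining boundary integral: since $a_{i}=-\int_{\Gamma_{i}}\nabla\psi\cdot d\mathbf n$, H\"older's inequality on the compact curve $\Gamma_{i}$ together with the trace estimate $\|\nabla\psi\|_{L^{p}(\Gamma_{i})}\lesssim\|\nabla\psi\|_{W^{1,p}(D)}\lesssim\|\psi\|_{W^{2,p}(D)}$ gives $|a_{i}|\lesssim\|\psi\|_{W^{2,p}(D)}$.

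For \eqref{appd2}, I would exploit that $\mathbf v=\nabla^{\perp}\psi=(\partial_{2}\psi,-\partial_{1}\psi)$, so that $|\mathbf v|=|\nabla\psi|$ pointwise and each first-order derivative of $\mathbf v$ coincides, up to sign, with a second-order derivative of $\psi$. This at once yields $\|\mathbf v\|_{W^{1,p}(D)}\lesssim\|\psi\|_{W^{2,p}(D)}$ and, conversely, bounds $\|\nabla\psi\|_{L^{p}(D)}+\|\nabla^{2}\psi\|_{L^{p}(D)}\lesssim\|\mathbf v\|_{W^{1,p}(D)}$. The only contribution to $\|\psi\|_{W^{2,p}(D)}$ not yet accounted for is $\|\psi\|_{L^{p}(D)}$, which I would recover from the boundary condition $\psi|_{\Gamma_{0}}=0$ via the Poincar\'e inequality for functions vanishing on the outer component $\Gamma_{0}$ of $\partial D$, namely $\|\psi\|_{L^{p}(D)}\lesssim\|\nabla\psi\|_{L^{p}(D)}=\|\mathbf v\|_{L^{p}(D)}$.

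Since the argument is an assembly of standard tools, there is no single hard obstacle; the points requiring care are the treatment of the circulation terms $a_{i}$ (for which I rely on the trace theorem on each $\Gamma_{i}$ and on the fact that $h_{\mathbf a}$ depends on $\mathbf a$ only through the fixed smooth profiles $\zeta_{j}$) and the use of the correct form of the Poincar\'e inequality, valid precisely because $\psi$ vanishes on the full outer boundary component $\Gamma_{0}$. The essential external input is the $L^{p}$-to-$W^{2,p}$ boundedness of the operator $\mathcal P$, i.e.\ the Calder\'on--Zygmund estimate for the Dirichlet Green operator, which is supplied by Lemma \ref{eiyoyo}(i).
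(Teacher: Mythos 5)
Your proof is correct. For \eqref{appd1} you follow essentially the paper's route: the representation $\psi=\mathcal P\omega+h_{\mathbf a}$ from Proposition \ref{propc2}, elliptic estimates for the $\omega$-part, smoothness of the $\zeta_j$ for the $h_{\mathbf a}$-part, and H\"older plus the trace inequality for the reverse direction (the paper expands $\mathcal P\omega$ as $\mathcal G\omega+\sum_{i,j}q_{ij}\int_D\zeta_i\omega\,dx\,\zeta_j$ and estimates termwise, whereas you invoke Lemma \ref{eiyoyo}(i) wholesale; this is only a difference in packaging). For \eqref{appd2}, however, you take a genuinely different route on the nontrivial inequality $\|\psi\|_{W^{2,p}(D)}\lesssim\|\mathbf v\|_{W^{1,p}(D)}$. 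The paper reduces it to the already-proved first half of \eqref{appd1}: it bounds $\|\omega\|_{L^p(D)}\lesssim\|\mathbf v\|_{W^{1,p}(D)}$ and each circulation by $|a_i|=\left|\int_{\Gamma_i}\mathbf v\cdot d\mathbf r\right|\lesssim\|\mathbf v\|_{L^p(\partial D)}\lesssim\|\mathbf v\|_{W^{1,p}(D)}$ via the trace inequality, then applies \eqref{appd1}. You instead identify, up to sign, the first and second derivatives of $\psi$ with the components of $\mathbf v$ and their first derivatives, and recover the missing zeroth-order term $\|\psi\|_{L^p(D)}$ from the Poincar\'e inequality for functions whose trace vanishes on the boundary component $\Gamma_0$. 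Both are sound: your argument is more elementary and self-contained for this step (no second use of the trace theorem, no reliance on \eqref{appd1}), but it leans on the Poincar\'e inequality for functions vanishing only on a portion of $\partial D$ of positive measure, which is valid here since $D$ is a connected smooth bounded domain and the constant depends only on $D$ and $p$; the paper's route handles all boundary data uniformly through the circulations and keeps the trace inequality as the single boundary tool.
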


\begin{proof}
We first prove \eqref{appd1}. By  Proposition \ref{propc2} in Appendix \ref{apdx3}, $\psi$ can be expressed in terms of $\omega$ and $\mathbf a$ as follows
\[\psi= \mathcal G\omega+\sum_{i,j=1}^{N}q_{ij}\int_{D}\zeta_{i}\omega dx\zeta_{j}-\sum_{i,j=1}^{N}q_{ij}a_{i}\zeta_{j}.\]
Denote by $p^{*}=p/(p-1)$  the H\"older conjugate of $p$. Applying the H\"older's inequality and standard elliptic estimates, we have that
\begin{equation*}
\begin{split}
\|\psi\|_{W^{2,p}(D)}&\leq \|\mathcal G\omega\|_{W^{2,p}(D)}+\sum_{i,j=1}^{N}|q_{ij}|\|\zeta_{i}\|_{L^{p^*}(D)}\|\omega\|_{L^{p}(D)}\|\zeta_{j}\|_{W^{2,p}(D)}+\sum_{i,j=1}^{N} \|q_{ij}a_{i}\zeta_{j}\|_{W^{2,p}(D)}\\
&\lesssim \|\omega\|_{L^p(D)}+\left(\sum_{i,j=1}^{N}|q_{ij}|\|\zeta_{i}\|_{L^{p^*}(D)}\|\zeta_{j}\|_{W^{2,p}(D)}\right)\|\omega\|_{L^{p}(D)}+\sum_{i,j=1}^{N}\|q_{ij}\zeta_{j}\|_{W^{2,p}(D)}|a_{i}|\\
&\lesssim \|\omega\|_{L^{p}(D)}+\sum_{i=1}^{N}|a_{i}|.
\end{split}
\end{equation*}
On the other hand, applying the H\"older's inequality and the trace inequality (see \cite{LCE}, p. 274), we have that
\begin{equation}\label{appd9}
\begin{split}
\|\omega\|_{L^{p}(D)}+\sum_{i=1}^{N}|a_{i}|&=\|-\Delta\psi\|_{L^{p}(D)}+\sum_{i=1}^{N}\left|\int_{\Gamma_{i}}\nabla\psi \cdot d\mathbf n\right|\\
&\lesssim \|\psi\|_{W^{2,p}(D)}+\|\nabla\psi\|_{L^{p}(\partial D)}\\
&\lesssim \|\psi\|_{W^{2,p}(D)}.
\end{split}
\end{equation}
Note that the trace inequality  was used in the last inequality of \eqref{appd9}. Hence \eqref{appd1} has been proved.

We next prove \eqref{appd2}. The second inequality is obvious. To prove the first inequality, notice that
\[\|\omega\|_{L^{p}(D)}\lesssim \|\mathbf v\|_{W^{1,p}(D)},\]
\[|a_{i}|= \left|\int_{\Gamma_{i}}\nabla\psi\cdot d\mathbf n\right|=\left|\int_{\Gamma_{i}}\mathbf v\cdot d\mathbf r\right|\lesssim \|\mathbf v\|_{L^p(\partial D)}\lesssim  \|\mathbf v\|_{W^{1,p}(D)}, \]
which in combination with the first inequality in \eqref{appd1} yields
\[
 \|\psi\|_{W^{2,p}(D)}\lesssim \|\mathbf v\|_{W^{1,p}(D)}.\]
 Hence \eqref{appd2} is proved.
\end{proof}


The following strong maximum principle for superharmonic functions will be used in the proof of Proposition \ref{keyprop}.
\begin{lemma}[\cite{LL}, p. 246]\label{smple}
Suppose $u\in H^2(D)$ (thus $u\in C(\bar D)$ by the Sobolev embedding theorem) is nonnegative, and satisfies
 \[-\Delta u\geq 0 \,\,\mbox{ a.e. in }D.\]
Then either $u\equiv 0$ in $D$, or else $u>0$ in $D$.

\end{lemma}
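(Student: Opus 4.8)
The plan is to deduce the stated dichotomy from the sub-mean-value inequality for superharmonic functions, combined with the connectedness of $D$. Recall that a function $v\in C^2$ satisfying $-\Delta v\ge 0$ obeys, for every ball $B_r(x_0)$ with $\overline{B_r(x_0)}\subset D$,
\[
v(x_0)\ge \frac{1}{|B_r(x_0)|}\int_{B_r(x_0)}v\,dx;
\]
this is the classical fact that the spherical average $\rho\mapsto \frac{1}{|\partial B_\rho(x_0)|}\int_{\partial B_\rho(x_0)}v\,dS$ is nonincreasing, because its derivative equals a positive multiple of $\int_{B_\rho(x_0)}\Delta v\,dx\le 0$, so it is dominated by its limiting value $v(x_0)$ as $\rho\to 0$, and one then integrates in $\rho$. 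The two tasks are therefore: first, to upgrade this inequality to the present hypothesis $u\in H^2(D)$ with $-\Delta u\ge 0$ only almost everywhere; and second, to run an open–closed argument on the zero set of $u$.

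To establish the mean value inequality for $u\in H^2(D)$, I would mollify. Fix $x_0\in D$ and $r>0$ with $\overline{B_r(x_0)}\subset D$, and let $u_\varepsilon=u*\rho_\varepsilon$ be the standard mollification by a nonnegative kernel $\rho_\varepsilon$. For $\varepsilon$ small, $u_\varepsilon$ is smooth on a neighborhood of $\overline{B_r(x_0)}$ and satisfies $-\Delta u_\varepsilon=(-\Delta u)*\rho_\varepsilon\ge 0$ there, since it is the convolution of the nonnegative function $-\Delta u\in L^2(D)$ with the nonnegative kernel $\rho_\varepsilon$. Applying the classical inequality to $u_\varepsilon$ gives $u_\varepsilon(x_0)\ge \frac{1}{|B_r(x_0)|}\int_{B_r(x_0)}u_\varepsilon\,dx$. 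Because $u\in C(\bar D)$ by the Sobolev embedding, $u_\varepsilon\to u$ uniformly on $\overline{B_r(x_0)}$ as $\varepsilon\to 0$, and passing to the limit yields the desired inequality for $u$ itself.

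With the inequality in hand, suppose $u\not\equiv 0$ and consider the zero set $Z=\{x\in D\mid u(x)=0\}$, which is relatively closed in $D$ by continuity. If $x_0\in Z$, then for every admissible $r$ the nonnegativity of $u$ and the mean value inequality give
\[
0=u(x_0)\ge \frac{1}{|B_r(x_0)|}\int_{B_r(x_0)}u\,dx\ge 0,
\]
forcing $\int_{B_r(x_0)}u\,dx=0$; since $u\ge 0$ is continuous, $u\equiv 0$ on $B_r(x_0)$. Hence $Z$ is open as well. As $D$ is connected (it is a domain of the form \eqref{dform}) and $Z\ne D$ by assumption, we conclude $Z=\varnothing$, that is, $u>0$ throughout $D$; this is precisely the stated dichotomy.

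The only delicate point is the validity of the mean value inequality under the weak hypothesis $-\Delta u\ge 0$ a.e., and the mollification step handles it cleanly, after which the argument is purely topological. An alternative route would be a Hopf-lemma argument applied at an interior minimum of $u$, but that is more cumbersome in the $H^2$ setting because it requires interpreting a pointwise normal derivative at the contact point, whereas the averaging approach uses only the continuity furnished by the Sobolev embedding. I therefore expect the passage from the classical to the $H^2$ inequality to be the main (though routine) technical obstacle.
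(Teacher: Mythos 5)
Your proof is correct. The paper does not actually prove this lemma --- it is quoted verbatim from Lieb--Loss \cite{LL}, p.~246 --- and your argument (mollification to transfer the classical sub-mean-value inequality to an $H^2$ function whose distributional Laplacian is nonpositive a.e., followed by the open--closed argument on the zero set of the continuous representative, using that the multiply-connected domain $D$ is connected) is exactly the standard proof given in that reference; the only difference is that your write-up is self-contained where the paper relies on the citation.
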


\section{Constrained variational problems}\label{section3}
In this section, we study several constrained variational problems in the function spaces $\mathcal X$ and $\mathcal Y$ (defined by \eqref{defx},  \eqref{defy} in Section 1), which are involved at many places in this paper.

We begin with the following constrained minimization problem in $\mathcal X$:
\begin{equation}\label{var1}
\lambda_{c}=\inf\left\{\int_{D}|\nabla u|^{2}+cu^{2}dx\mid u\in\mathcal X,\,\,\|u\|_{L^{2}(D)}=1\right\}.
\end{equation}

If $c\equiv 0$, we denote the minimum by $\lambda$, i.e.,
\begin{equation}\label{var3}
\lambda=\inf\left\{\int_{D}|\nabla u|^{2}dx\mid u\in\mathcal X, \,\,\|u\|_{L^{2}(D)}=1\right\}.
\end{equation}

\begin{proposition}\label{prop1}
Given $c\in L^\infty(D)$, the following assertions hold:
\begin{itemize}
\item [(i)] There exists a minimizer for \eqref{var1}.
\item [(ii)] If $\tilde u$ is a minimizer of \eqref{var1}, then $\tilde u\in\mathcal Y$, and
\begin{equation}\label{rop1}
-\Delta\tilde u+c\tilde u=\lambda_{c}\tilde u\quad \mbox{ a.e. in }D,
\end{equation}
\begin{equation}\label{rop2}
\int_{D}\nabla \tilde u\cdot\nabla\zeta_{i}dx+\int_{D}c\tilde u\zeta_{i}dx=\lambda_{c}\int_{D}\tilde u\zeta_{i}dx,\quad i=1,\cdot\cdot\cdot,N.
\end{equation}
\item[(iii)]It holds that
\begin{align}
\lambda_{c}&=\inf\left\{\int_{D}|\nabla u|^{2}+cu^{2}dx\mid u\in\mathcal Y, \,\,\|u\|_{L^{2}(D)}=1\right\}\label{zj8}\\
&=\inf\left\{\int_{D}\phi\mathcal P\phi+c(\mathcal P\phi)^{2}dx\mid \phi\in L^2(D), \,\,\|\mathcal P\phi\|_{L^{2}(D)}=1\right\}.\label{zj811}
\end{align}
In particular, it holds that
\begin{align}
\lambda&=\inf\left\{\int_{D}|\nabla u|^{2}dx\mid u\in\mathcal Y, \,\,\|u\|_{L^{2}(D)}=1\right\}\label{kzj8}\\
&=\inf\left\{\int_{D}\phi\mathcal P\phi dx\mid \phi\in L^2(D), \,\,\|\mathcal P\phi\|_{L^{2}(D)}=1\right\}.\label{kzj811}
\end{align}
\item[(iv)] Let $\tilde u$ be a minimizer of \eqref{var1}. Denote $\tilde u^+=\max\{\tilde u,0\}$, $\tilde u^-=\max\{-\tilde u,0\}.$ Then $\tilde u^+, \tilde u^-\in\mathcal Y$, and
\begin{equation}\label{rop1o}
-\Delta\tilde u^++c\tilde u^+=\lambda_{c}\tilde u^+\quad \mbox{a.e. in }D,
\end{equation}
\begin{equation}\label{rop1p}
-\Delta\tilde u^-+c\tilde u^-=\lambda_{c}\tilde u^-\quad \mbox{a.e. in }D.
\end{equation}
\end{itemize}
\end{proposition}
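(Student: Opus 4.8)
The plan is to handle the four assertions in order, letting (i) and (ii) carry the real content and deriving (iii)–(iv) from them. For (i) I would apply the direct method. Since $c\in L^\infty(D)$, the functional $J(u)=\int_D|\nabla u|^2+cu^2\,dx$ obeys $J(u)\geq-\|c\|_{L^\infty(D)}$ on the constraint set, so $\lambda_c$ is finite; moreover $\int_D|\nabla u|^2\,dx\leq J(u)+\|c\|_{L^\infty(D)}$ shows any minimizing sequence $\{u_n\}$ is bounded in $H^1(D)$. Passing to a subsequence, $u_n\rightharpoonup\tilde u$ in $H^1(D)$ and $u_n\to\tilde u$ in $L^2(D)$ by the compact embedding, so the constraint $\|\tilde u\|_{L^2(D)}=1$ persists and $\int_Dcu_n^2\,dx\to\int_Dc\tilde u^2\,dx$. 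Because $\mathcal X$ is a closed subspace of $H^1(D)$, hence weakly closed, we get $\tilde u\in\mathcal X$; weak lower semicontinuity of the Dirichlet energy then gives $J(\tilde u)\leq\liminf J(u_n)=\lambda_c$, so $\tilde u$ is a minimizer.

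For (ii) I would first record the weak Euler--Lagrange identity. As $\mathcal X$ is linear and $\tilde u\neq0$, the Lagrange multiplier rule yields $\int_D\nabla\tilde u\cdot\nabla v+c\tilde uv\,dx=\lambda_c\int_D\tilde uv\,dx$ for all $v\in\mathcal X$, the multiplier being identified as $\lambda_c$ by testing with $v=\tilde u$. Taking $v\in C_c^\infty(D)$ gives the distributional equation \eqref{rop1}, and taking $v=\zeta_i$ gives \eqref{rop2} at once. For $H^2$-regularity I would write $\tilde u=w+\sum_i\theta_i\zeta_i$ with $w\in H^1_0(D)$; since the $\zeta_i$ are harmonic and smooth, $-\Delta w=(\lambda_c-c)\tilde u\in L^2(D)$, so standard elliptic regularity for the Dirichlet problem gives $w\in H^2(D)$ and hence $\tilde u\in H^2(D)$. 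The delicate remaining point, and the step I expect to require the most care, is the zero-circulation condition: now that $\tilde u\in H^2(D)$, integrating the weak form by parts and subtracting \eqref{rop1} tested against $v$ leaves $\int_{\partial D}(\nabla\tilde u\cdot\mathbf n)v\,ds=0$ for all $v\in\mathcal X$. Since every such $v$ vanishes on $\Gamma_0$ and is an arbitrary constant on each $\Gamma_i$, this forces $\int_{\Gamma_i}\nabla\tilde u\cdot d\mathbf n=0$, whence $\tilde u\in\mathcal Y$.

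For (iii) the equality \eqref{zj8} is immediate: $\mathcal Y\subset\mathcal X$ gives ``$\geq$'', while (i)--(ii) produce a minimizer already lying in $\mathcal Y$, giving ``$\leq$''. For \eqref{zj811} I would exploit that $\mathcal P$ is a bijection from $L^2(D)$ onto $\mathcal Y$ with inverse $-\Delta$: by Proposition \ref{propc2}, $u=\mathcal P\phi$ solves \eqref{stb02} with vorticity $\phi$ and zero circulation vector, so $\mathcal P\phi\in\mathcal Y$ with $-\Delta\mathcal P\phi=\phi$, while uniqueness in \eqref{stb02} gives $\mathcal P(-\Delta u)=u$ for $u\in\mathcal Y$. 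Under this correspondence, integration by parts together with the zero-circulation condition yields $\int_D|\nabla\mathcal P\phi|^2\,dx=\int_D\phi\mathcal P\phi\,dx$ and $\|u\|_{L^2(D)}=\|\mathcal P\phi\|_{L^2(D)}$, so the $\mathcal Y$-infimum and the $\mathcal P$-infimum coincide.

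For (iv) the clean route is to note that $|\tilde u|\in\mathcal X$ (it still vanishes on $\Gamma_0$ and is constant on each $\Gamma_i$), that $\||\tilde u|\|_{L^2(D)}=1$, and that $|\nabla|\tilde u||=|\nabla\tilde u|$ a.e., so $J(|\tilde u|)=\lambda_c$ and $|\tilde u|$ is itself a minimizer. Applying (ii) to $|\tilde u|$ gives $|\tilde u|\in\mathcal Y$ and $-\Delta|\tilde u|+c|\tilde u|=\lambda_c|\tilde u|$. Since $\tilde u^+=\tfrac12(|\tilde u|+\tilde u)$ and $\tilde u^-=\tfrac12(|\tilde u|-\tilde u)$, and both $\tilde u$ and $|\tilde u|$ lie in the linear space $\mathcal Y$ and satisfy the same linear equation, linearity immediately delivers $\tilde u^+,\tilde u^-\in\mathcal Y$ together with \eqref{rop1o} and \eqref{rop1p}.
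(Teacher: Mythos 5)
Your proposal is correct, and for parts (i)--(iii) it is essentially the paper's own argument: the direct method with weak closedness of $\mathcal X$ (Proposition \ref{propb1}) and compact embedding for (i); normalized perturbations giving the weak Euler--Lagrange identity on $\mathcal X$, elliptic regularity, and an integration by parts against $\zeta_i$ (equivalently, your boundary-term argument with general $v\in\mathcal X$) for (ii); and for (iii), the inclusion $\mathcal Y\subset\mathcal X$ plus the fact that the minimizer lies in $\mathcal Y$ gives \eqref{zj8}, while the correspondence $\mathcal P^{-1}=-\Delta$ between $L^2(D)$ and $\mathcal Y$ — which you re-derive from Proposition \ref{propc2} rather than citing Lemma \ref{propb2}(ii)(iii) as the paper does — gives \eqref{zj811}.

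The one place you genuinely depart from the paper is (iv). The paper decomposes $\int_D|\nabla\tilde u|^2+c\tilde u^2\,dx$ and $\|\tilde u\|_{L^2(D)}^2$ into the contributions of $\tilde u^+$ and $\tilde u^-$, uses the definition of $\lambda_c$ to force equality in both Rayleigh bounds, and then (assuming without loss of generality $\tilde u^+\not\equiv0$) applies (ii) to the normalized function $\tilde u^+/\|\tilde u^+\|_{L^2(D)}$; the argument for $\tilde u^-$ is repeated symmetrically. You instead observe that $|\tilde u|\in\mathcal X$, $\||\tilde u|\|_{L^2(D)}=1$ and $|\nabla|\tilde u||=|\nabla\tilde u|$ a.e., so $|\tilde u|$ is itself a minimizer; applying (ii) once to $|\tilde u|$ and writing $\tilde u^{\pm}=\tfrac12(|\tilde u|\pm\tilde u)$, linearity of $\mathcal Y$ and of the equation delivers \eqref{rop1o} and \eqref{rop1p} simultaneously. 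Your route is slightly cleaner: it needs no normalization, no case distinction on whether $\tilde u^+$ or $\tilde u^-$ vanishes, and treats both parts in one stroke. Both routes rest on the same elementary facts, and both (yours and the paper's) are equally terse about the one point that deserves a remark: that $\mathcal X$ is stable under $u\mapsto|u|$ (respectively $u\mapsto u^{\pm}$), which follows from the trace characterization of $\mathcal X$ on a smooth domain, namely $\operatorname{tr}(|u|)=|\operatorname{tr}(u)|$ and $H^1_0(D)=\{w\in H^1(D)\mid \operatorname{tr}(w)=0\}$.
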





\begin{proof}
We first prove (i). For any $u\in \mathcal X,$ $\|u\|_{L^{2}(D)}=1,$ we have that
\[-\|c\|_{L^\infty(D)} \leq \int_{D}|\nabla u|^{2}+cu^{2}dx<+\infty. \]
Hence $\lambda_{c}\in\mathbb R.$ Choose a sequence $\{u_{n}\}\subset\mathcal X$, $\|u\|_{L^{2}(D)}=1$ for every $n$, such that
\begin{equation}\label{rop30}
\int_{D}|\nabla u_{n}|^{2}+cu_{n}^{2}dx\to\lambda_{c} \quad\mbox{as }n\to+\infty.
\end{equation}
Obviously $\{u_{n}\}$ is bounded in $H^{1}(D)$ and thus, up to a subsequence, it has a weak limit, say $\tilde u$, in $H^{1}(D)$. By Proposition \ref{propb1}(ii),  $\tilde u\in \mathcal X.$  In addition, since $H^{1}(D)$ is compactly embedded in $L^{2}(D),$ we infer that $\|\tilde u\|_{L^{2}(D)}=1.$
Hence, by the definition of $\lambda_c$,
\begin{equation}\label{kja1}
\int_{D}|\nabla \tilde u|^{2}+c\tilde u^{2}dx\geq \lambda_c.
\end{equation}
On the other hand,  by weak lower semicontinuity,
\begin{equation}\label{kja2}
\int_{D}|\nabla \tilde u|^{2}+c\tilde u^{2}dx\leq \liminf_{n\to+\infty}\int_{D}|\nabla   u_n|^{2}+c u_n^{2}dx=\lambda_{c}.
\end{equation}
From \eqref{kja1} and \eqref{kja2}, we see that $\tilde u$ is a maximzier of \eqref{var1}.

Next we prove (ii). Let $\tilde u$ be a maximizer of \eqref{var1}. For any $u\in \mathcal X$, define
\[u_{\varepsilon}=\frac{\tilde u+\varepsilon u}{\|\tilde u+\varepsilon u\|_{L^{2}(D)}},\]
where $\varepsilon\in\mathbb R$ is small in absolute value (such that $\|\tilde u+\varepsilon u\|_{L^{2}(D)}>0$).  Then we have
\begin{equation}\label{daos0}
\frac{d}{d\varepsilon}\left(\int_{D}|\nabla u_{\varepsilon}|^{2}+cu_{\varepsilon}^{2}dx\right)\bigg|_{\varepsilon=0}=0.
\end{equation}
After straightforward computations, we obtain from \eqref{daos0} that
\begin{equation}\label{daos1}
\int_{D}\nabla \tilde u\cdot\nabla u+c\tilde u udx=\lambda_{c}\int_{D}\tilde uudx.
\end{equation}
Write $u=v+\sum_{i=1}^{N}\theta_{i}\zeta_{i},$ where $ v\in H^{1}_{0}(D)$, $\theta_1,\cdot\cdot\cdot,\theta_{N}\in\mathbb R$. Then \eqref{daos1} becomes
\begin{equation}\label{daos2}
\begin{split}
&\int_{D}\nabla \tilde u\cdot\nabla v+c\tilde u vdx +\sum_{i=1}^{N}\left(\theta_{i}\int_{D}\nabla \tilde u\cdot\nabla \zeta_{i}+c\tilde u\zeta_{i}dx\right)\\
=&\lambda_{c}\int_{D}\tilde uvdx+\lambda_{c}\sum_{i=1}^{N}\left(\theta_{i}\int_{D}  u\zeta_{i}dx\right).
\end{split}
\end{equation}
Since \eqref{daos1} holds for arbitrary $u\in\mathcal X$, we see that \eqref{daos2} holds for any $v\in H^{1}_{0}(D)$ and $\theta_{1},\cdot\cdot\cdot,\theta_{N}\in \mathbb R,$ which results in
\begin{equation}\label{daos3}
\int_{D}\nabla \tilde u\cdot\nabla v+c\tilde u vdx  =\lambda_{c}\int_{D}\tilde uvdx,\quad\forall\,v\in H^{1}_{0}(D).
\end{equation}
\begin{equation}\label{daos4}
 \int_{D}\nabla \tilde u\cdot\nabla \zeta_{i}+c\tilde u\zeta_{i}dx= \lambda_{c}\int_{D}  \tilde u\zeta_{i}dx,\quad i=1,\cdot\cdot\cdot,N.
\end{equation}
By standard elliptic regularity theory we have that $\tilde u \in H^2(D)$, and  \begin{equation}\label{daos5}
-\Delta \tilde u+c\tilde u=\lambda_{c}\tilde u\quad\mbox{a.e. in }D.
\end{equation}
To complete the proof of (ii), it suffices to verify
\begin{equation}\label{daos6}
\int_{\Gamma_{i}}\nabla\tilde u\cdot d\mathbf n=0,\quad i=1,\cdot\cdot\cdot,N.
\end{equation}
In fact, using the Stokes theorem, we have that
\begin{equation*}
\int_{\Gamma_{i}}\nabla\tilde u\cdot d\mathbf n=\int_{\partial D}\zeta_{i}\nabla\tilde u\cdot d\mathbf n=\int_{ D}\nabla \zeta_{i}\cdot\nabla\tilde u +\zeta_{i}\Delta\tilde u dx=\int_{ D}\nabla \zeta_{i}\cdot\nabla\tilde u +(c-\lambda_{c})\tilde u\zeta_{i} dx=0.
\end{equation*}
Here we used \eqref{daos4} and \eqref{daos5}.

Next we prove (iii). Since $\mathcal Y\subset\mathcal X,$ we have
\[ \lambda_{c}\leq \inf\left\{\int_{D}|\nabla u|^{2}+cu^{2}dx\mid u\in\mathcal Y, \,\,\|u\|_{L^{2}(D)}=1\right\}.\]
On the other hand, since any maximizer $\tilde u$ of \eqref{var1} belongs to $\mathcal Y$ (by item (ii)), we have that
\[ \inf\left\{\int_{D}|\nabla u|^{2}+cu^{2}dx\mid u\in\mathcal Y, \,\,\|u\|_{L^{2}(D)}=1\right\}\leq \int_{D}|\nabla \tilde u|^{2}+c\tilde u^{2}dx=\lambda_{c}.\]
Thus  \eqref{zj8} is proved.  In combination with Proposition \ref{propb2}(ii)(iii),  we get \eqref{zj811}.

We finally prove (iv).  Let $\tilde u$ be a maximizer of \eqref{var1}. It is clear that $\tilde u^{+}, \tilde u^{-}\in\mathcal X.$  Hence, by the definition of $\lambda_{c}$,
\begin{equation}\label{daos10}
\int_{D}|\nabla \tilde u^{+}|^{2}+c(\tilde u^{+})^{2}dx\geq \lambda_{c}\int_{D}(\tilde u^{+})^{2}dx,
\end{equation}
\begin{equation}\label{daos11}
 \int_{D}|\nabla \tilde u^{-}|^{2}+c(\tilde u^{-})^{2}dx\geq \lambda_{c}\int_{D}  (\tilde u^{-})^{2}dx.
\end{equation}
On the other hand,  we have that
\begin{equation}\label{daos12}
\lambda_{c}=\int_{D}|\nabla \tilde u |^{2}+c\tilde u^{2}dx=\int_{D}|\nabla \tilde u^{+}|^{2}+c(\tilde u^{+})^{2}dx+\int_{D}|\nabla \tilde u^{-}|^{2}+c(\tilde u^{-})^{2}dx,
\end{equation}
\begin{equation}\label{daos13}
1=\int_{D}\tilde u^{2}dx=\int_{D}  (\tilde u^{+})^{2}dx+\int_{D}  (\tilde u^{-})^{2}dx.
\end{equation}
Combining \eqref{daos10}-\eqref{daos13}, we deduce that
\begin{equation}\label{daos14}
\int_{D}|\nabla \tilde u^{+}|^{2}+c(\tilde u^{+})^{2}dx= \lambda_{c}\int_{D}(\tilde u^{+})^{2}dx,
\end{equation}
\begin{equation}\label{daos141}
 \int_{D}|\nabla \tilde u^{-}|^{2}+c(\tilde u^{-})^{2}dx= \lambda_{c}\int_{D}  (\tilde u^{-})^{2}dx.
\end{equation}
To verify \eqref{rop1o},
we assume, without loss of generality,  that $\tilde u^{+}\neq 0.$ Then \eqref{daos14} implies that $\tilde u^{+}/\|\tilde u^{+}\|_{L^{2}(D)}$ is a maximizer of \eqref{var1}. Hence, by (ii),  $\tilde u^{+}\in \mathcal Y$ and satisfies
\[-\Delta \tilde u^{+}+c\tilde u^{+}=\lambda_{c}\tilde u^{+}\quad\mbox{a.e. in }D.\]
The proof of \eqref{rop1p} is similar.
\end{proof}

As a consequence of Proposition \ref{prop1}(iii), we have the following obvious corollary, which will be used in the proof of Proposition \ref{keyprop}.

 \begin{corollary}\label{glav2022}
 Let $c\in L^\infty(D)$. If
\begin{equation}\label{glav92}
 \int_{D}|\nabla u|^{2}+cu^{2}dx\geq 0,\quad \forall\, u\in \mathcal  Y,
 \end{equation}
 then
 \begin{equation}\label{glav91}
 \int_{D}|\nabla u|^{2}+cu^{2}dx\geq 0,\quad \forall\, u\in \mathcal  X.
 \end{equation}
\end{corollary}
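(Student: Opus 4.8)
The plan is to read the result off directly from Proposition \ref{prop1}(iii), which has already done the substantive work of identifying the infimum over $\mathcal{X}$ with the infimum over $\mathcal{Y}$. The only additional ingredient needed is the homogeneity of the quadratic form $u \mapsto \int_D |\nabla u|^2 + cu^2\,dx$, which lets us pass freely between the unconstrained inequalities \eqref{glav92}, \eqref{glav91} and their $L^2$-normalized counterparts appearing in the definition \eqref{var1} and the characterization \eqref{zj8}.

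First I would observe that, because the form is homogeneous of degree two, rescaling $u$ to $u/\|u\|_{L^2(D)}$ shows that the hypothesis \eqref{glav92} is equivalent to the assertion that $\int_D |\nabla u|^2 + cu^2\,dx \geq 0$ for every $u \in \mathcal{Y}$ with $\|u\|_{L^2(D)} = 1$. By the characterization \eqref{zj8} in Proposition \ref{prop1}(iii), the infimum of this quantity over such $u$ equals $\lambda_c$; hence \eqref{glav92} forces $\lambda_c \geq 0$.

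Next I would invoke the defining formula \eqref{var1}, in which $\lambda_c$ is the infimum of the \emph{same} quantity taken over $u \in \mathcal{X}$ with $\|u\|_{L^2(D)} = 1$. Together with $\lambda_c \geq 0$, this gives $\int_D |\nabla u|^2 + cu^2\,dx \geq \lambda_c \geq 0$ for every normalized $u \in \mathcal{X}$. Finally, for an arbitrary $u \in \mathcal{X}$ the inequality \eqref{glav91} is trivial when $u \equiv 0$, and otherwise it follows by applying the normalized inequality to $u/\|u\|_{L^2(D)}$ and clearing the positive factor $\|u\|_{L^2(D)}^2$.

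There is essentially no obstacle at this stage, since all the difficulty has been front-loaded into Proposition \ref{prop1}(iii): the equality of the two infima over $\mathcal{X}$ and over $\mathcal{Y}$ is precisely the statement that the inner-boundary data distinguishing $\mathcal{Y}$ from $\mathcal{X}$ can be shed at no cost to the minimum. Note that the inclusion $\mathcal{Y}\subset\mathcal{X}$ alone only gives the trivial implication (nonnegativity on $\mathcal{X}$ $\Rightarrow$ nonnegativity on $\mathcal{Y}$); it is the coincidence of the infima that supplies the nontrivial converse. Once that identity is in hand, the corollary is a one-line consequence of homogeneity, and the only point requiring a modicum of care is to phrase the equivalence between the unconstrained inequalities and their normalized forms so that the two characterizations of $\lambda_c$ can be chained together.
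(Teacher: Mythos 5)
Your proposal is correct and is exactly the argument the paper intends: the corollary is stated as an immediate consequence of Proposition \ref{prop1}(iii), namely the equality \eqref{zj8} of the normalized infima over $\mathcal X$ and $\mathcal Y$, combined with the degree-two homogeneity of the quadratic form. The paper leaves these details implicit (calling the corollary ``obvious''), and your write-up simply fills them in along the same route.
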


We are also interested in the following maximization problem:
\begin{equation}\label{var2}
\Lambda=\sup \left\{\int_{D}\phi\mathcal P\phi dx\mid \phi\in L^{2}(D), \,\,\|\phi\|_{L^{2}(D)}=1\right\}.
\end{equation}
It is easy to see that $\Lambda$ is a positive number. In addition, by Proposition \ref{propb2},  we have
\begin{equation}\label{var10}
\Lambda=\sup \left\{\int_{D}|\nabla u|^{2}dx\mid u\in\mathcal Y, \|\Delta u\|_{L^{2}(D)}=1\right\}.
\end{equation}

\begin{proposition}\label{jjzuo}
  There exists a maximizer for \eqref{var2}. Moreover, any maximizer $\tilde \phi$  of \eqref{var2} satisfies
\begin{equation}\label{kja5}
\mathcal P\tilde\phi=\Lambda\tilde\phi.
\end{equation}
\end{proposition}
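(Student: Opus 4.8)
The plan is to treat $\int_D \phi\mathcal P\phi\,dx$ as the quadratic form of a compact, self-adjoint, positive operator on $L^2(D)$, recover a maximizer by the direct method, and then derive the Euler--Lagrange relation $\mathcal P\tilde\phi=\Lambda\tilde\phi$ by a first-variation computation. First I would record the two structural properties of $\mathcal P$ that drive the argument. By Lemma \ref{eiyoyo}(i) (with $p=2$), $\mathcal P$ maps $L^2(D)$ boundedly into $W^{2,2}(D)=H^2(D)$, and since $H^2(D)$ is compactly embedded in $L^2(D)$, the operator $\mathcal P$ is compact on $L^2(D)$; moreover, by Proposition \ref{propb2}, $\mathcal P$ is self-adjoint and positive, so $\int_D \phi\mathcal P\phi\,dx\geq 0$ for all $\phi$ and $\Lambda$ is a positive number.

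For existence I would take a maximizing sequence $\{\phi_n\}\subset L^2(D)$ with $\|\phi_n\|_{L^2(D)}=1$ and $\int_D \phi_n\mathcal P\phi_n\,dx\to\Lambda$. Being bounded in $L^2(D)$, it admits (up to a subsequence) a weak limit $\tilde\phi$. Compactness of $\mathcal P$ gives $\mathcal P\phi_n\to\mathcal P\tilde\phi$ strongly in $L^2(D)$, so pairing a weakly convergent factor against a strongly convergent one yields $\int_D \phi_n\mathcal P\phi_n\,dx\to\int_D \tilde\phi\mathcal P\tilde\phi\,dx$, whence $\int_D \tilde\phi\mathcal P\tilde\phi\,dx=\Lambda$.

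The main obstacle is that the constraint $\|\phi\|_{L^2(D)}=1$ is not preserved under weak convergence: lower semicontinuity only gives $\|\tilde\phi\|_{L^2(D)}\leq 1$, so there could a priori be a loss of mass. Here the positivity of $\mathcal P$ is decisive. Since $\int_D \tilde\phi\mathcal P\tilde\phi\,dx=\Lambda>0$ we have $\tilde\phi\neq 0$, so $\phi^*:=\tilde\phi/\|\tilde\phi\|_{L^2(D)}$ is admissible and
\[
\int_D \phi^*\mathcal P\phi^*\,dx=\frac{1}{\|\tilde\phi\|_{L^2(D)}^2}\int_D \tilde\phi\mathcal P\tilde\phi\,dx=\frac{\Lambda}{\|\tilde\phi\|_{L^2(D)}^2}\leq\Lambda
\]
by the definition of $\Lambda$ as a supremum. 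This forces $\|\tilde\phi\|_{L^2(D)}\geq 1$, hence $\|\tilde\phi\|_{L^2(D)}=1$ and $\tilde\phi$ itself is a maximizer.

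Finally, for the Euler--Lagrange equation I would argue exactly as in the proof of Proposition \ref{prop1}(ii). Let $\tilde\phi$ be a maximizer and, for arbitrary $\eta\in L^2(D)$, set $\phi_\varepsilon=(\tilde\phi+\varepsilon\eta)/\|\tilde\phi+\varepsilon\eta\|_{L^2(D)}$ for $\varepsilon$ small in absolute value. The stationarity condition
\[
\frac{d}{d\varepsilon}\left(\int_D \phi_\varepsilon\mathcal P\phi_\varepsilon\,dx\right)\bigg|_{\varepsilon=0}=0,
\]
combined with the self-adjointness of $\mathcal P$ and $\|\tilde\phi\|_{L^2(D)}=1$, reduces after a routine quotient-rule computation to
\[
\int_D \eta\left(\mathcal P\tilde\phi-\Lambda\tilde\phi\right)dx=0,\quad\forall\,\eta\in L^2(D),
\]
and since $\eta$ is arbitrary this gives $\mathcal P\tilde\phi=\Lambda\tilde\phi$ a.e. in $D$. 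I expect no genuine difficulty in this last step beyond bookkeeping; the only delicate point in the whole argument is the mass-loss issue resolved above via the positivity of $\mathcal P$ and a scaling normalization.
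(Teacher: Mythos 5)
Your proposal is correct and follows essentially the same route as the paper: direct method with a maximizing sequence, strong convergence of $\mathcal P\phi_n$ via compactness to pass to the limit in the quadratic form, the scaling/positivity argument to rule out loss of mass in the normalization constraint, and the same first-variation computation for $\mathcal P\tilde\phi=\Lambda\tilde\phi$. The only cosmetic difference is that you phrase the normalization step directly (forcing $\|\tilde\phi\|_{L^2(D)}\geq 1$), whereas the paper phrases it as a contradiction; these are logically equivalent.
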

\begin{proof}
Choose a sequence $\{\phi_{n}\}\subset L^{2}(D)$, $\|\phi_n\|_{L^{2}(D)}=1$ for every $n$,  such that
\[\int_{D}\phi_{n}\mathcal P\phi_{n}dx\to\Lambda \quad\mbox{as }n\to+\infty.\]
We assume, up to a subsequence, that $\phi_{n}$ converges weakly to $\tilde \phi$ in $L^{2}(D)$. By weak lower semicontinuity, we have
$\|\tilde\phi\|_{L^{2}(D)}\leq 1$.
Besides, by Proposition \ref{eiyoyo}(i) we have that
 $\mathcal P\phi_{n}\to \mathcal P\tilde \phi$ in $L^{2}(D)$. Therefore
\begin{equation}\label{kja10}
\int_{D}\tilde \phi\mathcal P\tilde\phi dx=\lim_{n\to+\infty}\int_{D}\phi_{n}\mathcal P\phi_{n}dx=\Lambda.
\end{equation}
In order to show that $\tilde \phi$ is a maximizer, it is enough to prove that $\|\tilde \phi\|_{L^{2}(D)}=1.$ Suppose by contradiction that $\|\tilde \phi\|_{L^{2}(D)}<1$. Since  $\tilde \phi\neq 0$ (due to \eqref{kja10}), we can compute
\[\int_{D}\left(\frac{\tilde \phi}{\|\tilde \phi\|_{L^{2}(D)}}\right)\mathcal P\left(\frac{\tilde \phi}{\|\tilde \phi\|_{L^{2}(D)}}\right)=\frac{\Lambda}{\|\tilde \phi\|_{L^{2}(D)}^{2}}>\Lambda,\]
a contradiction to the definition of $\Lambda.$  Hence the existence of a maximizer is proved.

Below we show that any maximizer $\tilde \phi$ satisfies \eqref{kja5}.
For any  $\phi\in L^{2}(D)$, define
\[\phi_{\varepsilon}=\frac{\tilde \phi+\varepsilon \phi}{\|\tilde \phi+\varepsilon \phi\|_{L^{2}(D)}},\]
where $\varepsilon\in\mathbb R$ is small in absolute value such that  $\|\tilde \phi+\varepsilon \phi\|_{L^{2}(D)}>0$.  Then
\begin{equation}\label{varr1}
\frac{d}{d\varepsilon}\left(\int_{D}\phi_{\varepsilon}\mathcal P\phi_{\varepsilon}dx\right)\bigg|_{\varepsilon=0}=0,
\end{equation}
which implies after some simple computations that
\begin{equation}\label{varr2}
\int_{D}\phi\mathcal P\tilde\phi dx=\Lambda\int_{D}\phi \tilde\phi dx.
\end{equation}
Since \eqref{varr2} holds for any  $\phi\in L^{2}(D)$, \eqref{kja5} is proved.
\end{proof}

The following proposition reveals the relations between the two variational problems \eqref{var3} and \eqref{var2}.

 \begin{proposition}\label{prop37}
The two variational problems \eqref{var3} and \eqref{var2} are equivalent, in the sense that:
 \begin{itemize}
\item[(i)]  $\lambda\Lambda=1$;
\item[(ii)] if $\tilde \phi$ is a maximizer of \eqref{var2}, then $\Lambda^{-1}(\mathcal P\tilde\phi)$  is a minimizer of \eqref{var3};
\item[(iii)] if $\tilde u$ is a maximizer of \eqref{var3}, then  $\lambda^{-1}(-\Delta\tilde u)$ is a minimizer of \eqref{var2}.
\end{itemize}
\end{proposition}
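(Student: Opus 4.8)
The plan is to establish the three assertions by exploiting the tight connection between the minimization problem \eqref{var3} and the eigenvalue structure of the operator $\mathcal P$, which by Proposition \ref{jjzuo} has $\Lambda$ as its largest eigenvalue. The key conceptual point is that $\mathcal P$ acts as a kind of inverse to $-\Delta$ (subject to the boundary conditions built into $\mathcal X$, $\mathcal Y$), so minimizing $\int_D|\nabla u|^2$ over $\|u\|_{L^2}=1$ and maximizing $\int_D\phi\mathcal P\phi$ over $\|\phi\|_{L^2}=1$ should be dual problems with reciprocal optimal values. I would first record the two variational identities \eqref{kzj811} and \eqref{var2} and the first-order conditions \eqref{kja5} and Proposition \ref{prop1}(ii) (with $c\equiv 0$), which give the Euler--Lagrange equations $\mathcal P\tilde\phi=\Lambda\tilde\phi$ and $-\Delta\tilde u=\lambda\tilde u$ together with the boundary data $\tilde u\in\mathcal Y$.

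For part (i), I would prove the two inequalities $\lambda\Lambda\geq 1$ and $\lambda\Lambda\leq 1$ separately by transporting competitors between the two problems. Given any admissible $\phi$ for \eqref{var2} with $\|\phi\|_{L^2(D)}=1$, set $u=\mathcal P\phi$; then $u\in\mathcal Y$ and, using that $-\Delta\mathcal P\phi=\phi$ (a property of $\mathcal P$ recorded in the appendix, via \eqref{biosaa0}) together with integration by parts against the boundary conditions, one gets $\int_D|\nabla u|^2dx=\int_D\phi\mathcal P\phi\,dx$ while $\|u\|_{L^2(D)}^2=\|\mathcal P\phi\|_{L^2(D)}^2$. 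Feeding the normalized $u/\|u\|_{L^2(D)}$ into the form \eqref{kzj8} of $\lambda$ yields $\lambda\leq\int_D\phi\mathcal P\phi\,dx/\|\mathcal P\phi\|_{L^2(D)}^2$; optimizing over $\phi$ and comparing with \eqref{kzj811} and \eqref{var2} produces $\lambda\Lambda\leq 1$. Conversely, given an admissible $u\in\mathcal Y$ for \eqref{kzj8}, set $\phi=-\Delta u$; then $u=\mathcal P\phi$ (by uniqueness for the elliptic problem \eqref{biosaa0}), and the same integration-by-parts identity gives the reverse comparison, so $\lambda\Lambda\geq 1$. Rather than running both directions in full, it is cleaner to evaluate the identities at the actual optimizers: if $\tilde\phi$ maximizes \eqref{var2}, then $\mathcal P\tilde\phi=\Lambda\tilde\phi$ gives $\int_D\tilde\phi\mathcal P\tilde\phi\,dx=\Lambda$ and $\|\mathcal P\tilde\phi\|_{L^2(D)}^2=\Lambda^2$, so $u:=\Lambda^{-1}\mathcal P\tilde\phi=\tilde\phi$ (since $\mathcal P\tilde\phi=\Lambda\tilde\phi$) is normalized in $L^2(D)$ and satisfies $\int_D|\nabla u|^2dx=\Lambda^{-1}$, whence $\lambda\leq\Lambda^{-1}$; combining with the reverse bound obtained symmetrically from a minimizer $\tilde u$ of \eqref{var3} gives $\lambda\Lambda=1$ and simultaneously verifies (ii) and (iii).

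Concretely, for (ii) I would take a maximizer $\tilde\phi$ of \eqref{var2}, set $u:=\Lambda^{-1}\mathcal P\tilde\phi$, and check two things: that $u$ is admissible for \eqref{var3}, i.e. $u\in\mathcal X$ (in fact $u\in\mathcal Y$, since $\mathcal P$ maps into $\mathcal Y$) with $\|u\|_{L^2(D)}=1$, and that $u$ attains the value $\lambda$. The normalization follows from $\mathcal P\tilde\phi=\Lambda\tilde\phi$ together with $\|\tilde\phi\|_{L^2(D)}=1$, giving $\|u\|_{L^2(D)}=\Lambda^{-1}\|\mathcal P\tilde\phi\|_{L^2(D)}=\Lambda^{-1}\cdot\Lambda\|\tilde\phi\|_{L^2(D)}=1$. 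The energy is $\int_D|\nabla u|^2dx=\Lambda^{-2}\int_D|\nabla\mathcal P\tilde\phi|^2dx=\Lambda^{-2}\int_D\tilde\phi\,\mathcal P\tilde\phi\,dx=\Lambda^{-2}\cdot\Lambda=\Lambda^{-1}=\lambda$ by (i), so $u$ is a minimizer. For (iii) I would dually take a minimizer $\tilde u$ of \eqref{var3}, set $\phi:=\lambda^{-1}(-\Delta\tilde u)$, and use the Euler--Lagrange equation $-\Delta\tilde u=\lambda\tilde u$ (Proposition \ref{prop1}(ii) with $c\equiv 0$, noting $\lambda_0=\lambda$) to identify $\phi=\tilde u$, then verify $\|\phi\|_{L^2(D)}=1$ and $\int_D\phi\mathcal P\phi\,dx=\Lambda$ by the same computation run backwards.

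The main obstacle, and the only place requiring genuine care rather than bookkeeping, is ensuring that the boundary terms in the integration by parts vanish exactly, so that the identity $\int_D|\nabla\mathcal P\phi|^2dx=\int_D\phi\,\mathcal P\phi\,dx$ holds without stray contributions from the inner boundaries $\Gamma_i$. This is precisely what the definition of $\mathcal Y$ in \eqref{defy} and the membership $\mathcal P\phi\in\mathcal Y$ are designed to guarantee: the constraints $\int_{\Gamma_i}\nabla(\mathcal P\phi)\cdot d\mathbf n=0$ kill the flux terms, and the constancy of $\mathcal P\phi$ on each $\Gamma_i$ lets the constant be pulled out of the boundary integral. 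I would invoke Proposition \ref{propb2} and the mapping property $\mathcal P:L^2(D)\to\mathcal Y$ (Lemma \ref{eiyoyo}) to certify these facts rather than rederiving them, so that the whole argument reduces to the clean eigenvalue identity $\lambda=\Lambda^{-1}$ and the observation that the optimizers of the two problems are images of one another under $\mathcal P$ and $-\Delta$.
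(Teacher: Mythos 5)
Your proposal is correct and follows essentially the same route as the paper: both proofs invoke the Euler--Lagrange equations $\mathcal P\tilde\phi=\Lambda\tilde\phi$ and $-\Delta\tilde u=\lambda\tilde u$, transport each optimizer into the other problem (using that $\mathcal P$ maps onto $\mathcal Y$ with $\mathcal P^{-1}=-\Delta$ and the identity $\int_D\phi\,\mathcal P\phi\,dx=\int_D|\nabla\mathcal P\phi|^2dx$), and deduce the two reciprocal inequalities $\lambda\Lambda\leq 1$ and $\lambda\Lambda\geq 1$, with (ii) and (iii) then following from equality. Your explicit observation that the transported optimizers coincide with the originals ($\Lambda^{-1}\mathcal P\tilde\phi=\tilde\phi$ and $\lambda^{-1}(-\Delta\tilde u)=\tilde u$) is left implicit in the paper but is the same computation, merely carried out in the formulations \eqref{var3} and \eqref{var2} rather than in the equivalent forms \eqref{zj811} and \eqref{var10}.
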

\begin{proof}
Let $\tilde \phi$ be a maximizer for \eqref{var2}. Then  $
\tilde\phi$ satisfies
\[\| \tilde\phi\|_{L^{2}(D)}=1,\quad \int_{D}\tilde\phi \mathcal P\tilde\phi dx=\Lambda.\]
By \eqref{kja5}, we have that
\[\|\mathcal P\tilde\phi\|_{L^2(D)}=\Lambda\|\tilde\phi\|_{L^2(D)}=\Lambda.\]
Taking into account \eqref{zj811} (taking $c\equiv0$ therein),
\begin{equation}\label{varr20}
\lambda\leq \int_{D}\left(\frac{\tilde \phi}{\|\mathcal P\tilde\phi\|_{L^{2}(D)}}\right)\mathcal P\left(\frac{\tilde \phi}{\|\mathcal P\tilde\phi\|_{L^{2}(D)}}\right)dx=\frac{1}{\Lambda}.
\end{equation}
Thus  $\lambda\Lambda\le1.$

Now we prove the converse inequality. Let $\tilde u$ be a minimizer of \eqref{var3}. Then $\tilde u$ satisfies
\[\tilde u\in   \mathcal Y,\quad \|\tilde u\|_{L^2(D)}=1,\quad \int_{D}|\nabla \tilde u|^{2}dx=\lambda.\]
By Proposition \ref{prop1}(ii) (taking $c\equiv 0$ therein),
\[\|\Delta\tilde u\|_{L^{2}(D)}=\lambda\|\tilde u\|_{L^2(D)}=\lambda.\]
Recalling \eqref{var10}, we have that
\begin{equation}\label{varr22}
\Lambda\ge \int_{D}\left|\nabla \left(\frac{\tilde u}{\|\Delta\tilde u\|_{L^{2}(D)}}\right)\right|^{2}dx=\frac{1}{\lambda},
\end{equation}
 which yields $\lambda\Lambda\ge1.$ Hence (i) is proved.

Since $\lambda\Lambda=1$, we see that  \eqref{varr20} and \eqref{varr22} are in fact equalities, from which we conclude (ii) and (iii) immediately.
\end{proof}

\begin{remark}
It is not hard to verify that $\lambda$ is strictly less than the  first eigenvalue of $-\Delta$ with zero boundary condition in $H^{1}_{0}(D),$ and any
  maximizer of \eqref{var3}  cannot belong to $H^{1}_{0}(D)$. We omit the proof here since they are not directly related to this paper, .
 \end{remark}


\section{Variational characterization}\label{varcha}

As the first step towards proving Theorem \ref{thm1}, we give a variational characterization for the steady flow in Theorem \ref{thm1} in terms of  conserved quantities of the two-dimensional Euler equations. More specifically, we will show that  the  vorticity of the steady flow under consideration is  an isolated local maximizer of the kinetic energy relative to its rearrangement class.

Throughout this section, let  $1<p<+\infty$ be fixed.  Let $\bar\psi$ be as in Theorem \ref{thm1}. Let  $\mathbf a=(a_1,\cdot\cdot\cdot,a_{N})$ be the circulation vector of the steady flow determined by $\bar\psi$, i.e.,
\[a_{i}=-\int_{\Gamma_{i}}\nabla\bar\psi\cdot d\mathbf n,\quad i=1,\cdot\cdot\cdot,N.\]
Let $\bar\omega=-\Delta\bar\psi$  be the corresponding vorticity. By Proposition \ref{propc2}, we have $\bar\psi=\mathcal P\bar\omega+h_{\mathbf a}.$ Hence $\bar\omega$ satisfies
\begin{equation}\label{glav5}
\bar\omega=g(\mathcal P\bar\omega+h_{\mathbf a}).
\end{equation}

Our main result in this section can be stated as follows.

\begin{theorem}\label{thm2}
 $\bar\omega$ is an isolated local maximizer of $E(\cdot,\mathbf a)$ relative to $\mathcal R_{\bar\omega},$ i.e.,  there exists some $r_{0}>0$ such that
 \begin{equation}\label{aspthm2}
  E(w,\mathbf a) < E(\bar\omega,\mathbf a),\quad\forall\,w\in \mathcal R_{\bar\omega},\,\,0<\|w-\bar\omega\|_{L^{p}(D)}<r_{0}.
 \end{equation}
 \end{theorem}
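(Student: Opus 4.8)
The plan is to realise $\bar\omega$ as a constrained maximiser of an energy--Casimir functional and to extract \emph{strictness} from the positivity hypothesis \eqref{ssmmy}. First I would globalise the profile. Since \eqref{min0w} makes $g$ nondecreasing on the compact range $I=[\min_{\bar D}\bar\psi,\max_{\bar D}\bar\psi]$, Lemma \ref{mM} furnishes $\tilde g\in C^1(\mathbb R)$ that agrees with $g$ on $I$, is strictly increasing off $I$, and has positive linear growth at $\pm\infty$; as $\bar\psi$ takes values in $I$, the identity \eqref{glav5} persists in the form $\bar\omega=\tilde g(\bar\psi)$. Lemma \ref{lt} then supplies the convex Legendre transform $\hat{\tilde G}$ of $\tilde G(s)=\int_0^s\tilde g$, with the Young inequality $\hat{\tilde G}(s)+\tilde G(\tau)\ge s\tau$ and equality exactly when $\tilde g(\tau)=s$. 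The Casimir $w\mapsto\int_D\hat{\tilde G}(w)\,dx$ depends only on the distribution of $w$, hence is constant on $\mathcal R_{\bar\omega}$; moreover every $w\in\mathcal R_{\bar\omega}$ inherits the $L^\infty$ bound of $\bar\omega=g(\bar\psi)$, so all integrals below are finite even though closeness is measured in $L^p$.

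Next I would expand the energy. Writing $\eta=w-\bar\omega$ and using the symmetry of $\mathcal P$ together with $\bar\psi=\mathcal P\bar\omega+h_{\mathbf a}$, the formula \eqref{kevv0} yields, for every $w\in\mathcal R_{\bar\omega}$,
\begin{equation*}
E(w,\mathbf a)-E(\bar\omega,\mathbf a)=\int_D\eta\,\bar\psi\,dx+\frac12\int_D\eta\,\mathcal P\eta\,dx.
\end{equation*}
Integrating the pointwise Young gap $\Delta(x)=\hat{\tilde G}(w)+\tilde G(\bar\psi)-w\bar\psi\ge0$, and using both the equality case at $\bar\omega$ (where $\tilde g(\bar\psi)=\bar\omega$) and the constancy of the Casimir on $\mathcal R_{\bar\omega}$, the linear term collapses to $\int_D\eta\bar\psi\,dx=-R(w)$, where $R(w)=\int_D\Delta(x)\,dx\ge0$. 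This is the supporting-functional identity of Wolansky--Ghil in disguise, and it gives the clean reduction
\begin{equation*}
E(w,\mathbf a)-E(\bar\omega,\mathbf a)=\frac12\int_D\eta\,\mathcal P\eta\,dx-R(w),\qquad w\in\mathcal R_{\bar\omega}.
\end{equation*}
Everything now hinges on proving $R(w)>\tfrac12\int_D\eta\,\mathcal P\eta\,dx$ whenever $0<\|\eta\|_{L^p(D)}<r_0$.

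This strict inequality is the heart of the matter, and it is where \eqref{ssmmy} enters. Setting $u=\mathcal P\eta\in\mathcal Y$ (so $-\Delta u=\eta$ and $\int_D\eta\,\mathcal P\eta=\int_D|\nabla u|^2$), the leading part of $R$ is the quadratic form $\tfrac12\int_D \eta^2/g'(\bar\psi)$, and a Cauchy--Schwarz estimate combined with \eqref{ssmmy} gives
\begin{equation*}
\Bigl(\int_D|\nabla u|^2\,dx\Bigr)^2\le \int_D\frac{(\Delta u)^2}{g'(\bar\psi)}\,dx\cdot\int_D g'(\bar\psi)u^2\,dx\le \int_D\frac{(\Delta u)^2}{g'(\bar\psi)}\,dx\cdot\int_D|\nabla u|^2\,dx,
\end{equation*}
so that the energy form is dominated by the Casimir Hessian, $\int_D|\nabla u|^2\le\int_D(\Delta u)^2/g'(\bar\psi)$. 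To promote this nonstrict, purely quadratic domination to the genuine strict inequality on a punctured $L^p$-ball I would argue by contradiction: given $w_n\in\mathcal R_{\bar\omega}$ with $\eta_n=w_n-\bar\omega\to0$ in $L^p$ violating it, I would normalise, pass to a weak limit (using the smoothing and compactness of $\mathcal P$, Lemma \ref{eiyoyo}), and invoke the weak positive definiteness of Proposition \ref{keyprop}---the sharpened form of \eqref{ssmmy}, valid on all of $\mathcal X$ by Corollary \ref{glav2022}---to force the limit, and then $\eta_n$, to vanish.

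I expect the main obstacle to be precisely this quantitative, strict comparison of $R(w)$ with the energy quadratic form. Two difficulties compound. First, the degeneracy permitted by \eqref{min0w}: where $g'(\bar\psi)$ vanishes the formal Hessian $\int \eta^2/g'(\bar\psi)$ blows up, so one must rule out concentration of the perturbation on $\{g'(\bar\psi)=0\}$. Second, $R(w)$ is not exactly quadratic, so the higher-order Bregman remainder must be absorbed uniformly while only the $L^p$ size of $\eta$---not its $\mathcal P$-energy---is directly controlled on $\mathcal R_{\bar\omega}$. Reconciling these is exactly what the weak positive definiteness of Proposition \ref{keyprop}, together with the compactness of $\mathcal P$, is designed to accomplish, and it is the step I would devote the most care to.
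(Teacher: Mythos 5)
Your setup is sound and coincides with the paper's own first stage: the globalization of $g$ via Lemma \ref{mM}, the Legendre transform via Lemma \ref{lt}, and your identity $E(w,\mathbf a)-E(\bar\omega,\mathbf a)=\tfrac12\int_D\eta\,\mathcal P\eta\,dx-R(w)$ is precisely the supporting-functional inequality of Lemma \ref{ymys00} kept in sharp form. The gap lies in the step you yourself call the heart of the matter, and it is twofold. First, your mechanism for strictness cannot work. Cauchy--Schwarz plus \eqref{ssmmy} gives only the nonstrict bound $\int_D\eta\,\mathcal P\eta\,dx\le\int_D\eta^2/g'(\bar\psi)\,dx$, and under the hypotheses of Theorem \ref{thm1} equality genuinely occurs for nonzero $\eta$: if $u_*\in\mathcal Y$ realizes equality in \eqref{ssmmy} (the prototype is $g(s)=\lambda s$ with $\bar\psi$ a minimizer of \eqref{var3} --- the very example the theorem is designed to cover and Arnold's theorem misses), then $-\Delta u_*=g'(\bar\psi)u_*$ by Proposition \ref{prop1}(ii), and $\eta_*=g'(\bar\psi)u_*$ satisfies $\int_D\eta_*\mathcal P\eta_*\,dx=\int_D\eta_*^2/g'(\bar\psi)\,dx$, so the two quadratic forms cancel identically along this direction. ``Invoking Proposition \ref{keyprop}'' does not repair this: that proposition bounds from below the \emph{corrected} form containing $\bigl(\int_Dg'(\bar\psi)u\,dx\bigr)^2/\int_Dg'(\bar\psi)\,dx$, and nothing in your reduction ever produces that term. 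In the paper it is produced by the one structural fact your argument never uses, namely that rearrangement preserves the mean, $\int_D\eta\,dx=0$: this is what legitimizes the one-parameter family of shifted supporting functionals $\mathcal D_s$, the minimization over $s$ (Lemma \ref{lm56}), the identification of the optimal shift at $\bar\omega$ (Lemma \ref{lm57}), and the asymptotics $\mu_n=\int_Dg'(\bar\psi)\mathcal P\phi_n\,dx/\int_Dg'(\bar\psi)\,dx+o(\|\mathcal P\phi_n\|_{L^2(D)})$ of \eqref{muco489}, whose substitution converts the inconclusive quadratic comparison into the coercive inequality \eqref{lemc1220}. This device is the actual novelty of the paper's proof, and it is absent from your proposal.

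Second, your expansion of $R(w)$ is not legitimate as an analytic step. Where $g'(\bar\psi)=0$ the function $g$ has flat pieces, so the Legendre transform $\hat{\tilde G}$ has corners exactly at values taken by $\bar\omega$ there; it is not $C^1$, let alone $C^2$, at those points, so ``the leading part of $R$ is $\tfrac12\int_D\eta^2/g'(\bar\psi)\,dx$'' is not the second-order term of any valid Taylor expansion. Moreover, even where $\hat{\tilde G}$ is smooth, a pointwise second-order expansion of $\hat{\tilde G}(\bar\omega+\eta)$ with controlled remainder requires $\eta$ to be \emph{pointwise} small, whereas on $\mathcal R_{\bar\omega}$ the perturbation is only small in $L^p(D)$ (it is bounded by $2\|\bar\omega\|_{L^\infty(D)}$ but nowhere forced to be small). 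This is precisely why the supporting-functional method expands the other side of Young's inequality: in \eqref{dygbd}, \eqref{pofthm29} and \eqref{mucon110} all Taylor expansions act on the smooth functions $G$ and $g$ evaluated at $\bar\psi$ plus $\mathcal P\phi_n-\mu_n$, a quantity that \emph{is} uniformly small by Lemma \ref{eiyoyo}(i) and $W^{2,p}(D)\hookrightarrow C(\bar D)$; you expand the nonsmooth side in the non-small variable. Finally, your closing step (``force the limit, and then $\eta_n$, to vanish'') is not a contradiction: in the contradiction setup $\eta_n\to0$ already, and weak limits of normalized sequences may vanish. The paper's contradiction is quantitative, not soft: the negation forces the corrected quadratic form to be $o(\|\mathcal P\phi_n\|^2_{L^2(D)})$, against the uniform lower bound $\delta_0\|\mathcal P\phi_n\|^2_{L^2(D)}$ of Proposition \ref{keyprop}.
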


The proof of Theorem \ref{thm2} is based on the  energy-Casimir ($EC$)  method   and the  supporting  functional  method.

\subsection{$EC$ method and  supporting  functional method}\label{subsec51}

Before proving Theorem \ref{thm2}, we briefly review the energy-Casimir  method  by Arnold in the 1960s, and its further development, the  supporting  functional  method, by Wolansky and Ghil in the 1990s, in the study of nonlinear stability of two-dimensional steady Euler flows.

 To explain these two methods, we use them to prove the following special case of Theorem \ref{thm2} as an example.

\begin{theorem}[A special case of Theorem \ref{thm2}]\label{thm20}
In the setting of  Theorem \ref{thm1}, if \eqref{ssmmy} is replaced by the following  stronger condition:
 \begin{equation}\label{ssmmy2}
 \int_{D}|\nabla u|^{2}-g'(\bar\psi)u^{2}dx\geq \delta_{0}\int_{D}u^{2}dx,\quad \forall\, u\in \mathcal Y
 \end{equation}
for some $\delta_{0}>0$.
Then  $\bar\omega$ is an isolated local maximizer of $E(\cdot,\mathbf a)$ relative to $\mathcal R_{\bar\omega}.$
\end{theorem}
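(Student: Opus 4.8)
The plan is to prove Theorem \ref{thm20} by the energy-Casimir method, exploiting the strict coercivity gained from \eqref{ssmmy2}. The key observation is that $\bar\omega=g(\bar\psi)$ with $\bar\psi=\mathcal P\bar\omega+h_{\mathbf a}$, so $\bar\psi=f(\bar\omega)$ where $f$ is the (generalized) inverse of $g$ provided by Lemma \ref{lt}. Following Arnold, I would augment the kinetic energy $E(\cdot,\mathbf a)$ by a Casimir functional designed so that $\bar\omega$ becomes a critical point of the augmented functional, and then show the augmentation makes the second-order behavior strictly concave in the relevant directions. Since Lemma \ref{mM} lets us replace $g$ on a bounded interval by a function with linear growth at infinity, I may first modify $g$ outside the range of $\bar\psi$ (this does not affect the steady state) so that the Legendre transform $\hat G$ of Lemma \ref{lt} is available and finite.

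Concretely, I would define the augmented functional
\[
\mathcal J(w)=E(w,\mathbf a)-\int_D \hat G(w)\,dx,
\]
where $\hat G$ is the Legendre transform of $G$ from Lemma \ref{lt}. The point of subtracting the Casimir $\int_D\hat G(w)\,dx$ is twofold. First, since $w\mapsto\int_D\hat G(w)\,dx$ is constant on the rearrangement class $\mathcal R_{\bar\omega}$ (this is exactly \eqref{brol} applied to $f=\hat G$), maximizing $E(\cdot,\mathbf a)$ over $\mathcal R_{\bar\omega}$ is equivalent to maximizing $\mathcal J$ over $\mathcal R_{\bar\omega}$; so it suffices to show $\bar\omega$ is an isolated local maximizer of $\mathcal J$ over $\mathcal R_{\bar\omega}$. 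Second, the subtraction is tuned so that $\bar\omega$ is an unconstrained critical point: the first variation of $E(\cdot,\mathbf a)$ at $\bar\omega$ in a direction $h$ is $\int_D h\,\mathcal P\bar\omega\,dx+\int_D h\,h_{\mathbf a}\,dx=\int_D h\,\bar\psi\,dx$, while the first variation of the Casimir term is $\int_D \hat G'(\bar\omega)h\,dx=\int_D f(\bar\omega)h\,dx=\int_D\bar\psi\,h\,dx$ by Lemma \ref{lt}(ii) and \eqref{glav5}. These cancel, so $\bar\omega$ is a critical point of $\mathcal J$.

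The heart of the argument is the quantitative upper bound. For $w\in\mathcal R_{\bar\omega}$ write $h=w-\bar\omega$ and estimate $\mathcal J(w)-\mathcal J(\bar\omega)$. The energy part contributes exactly $\tfrac12\int_D h\,\mathcal P h\,dx$ (the quadratic term is exact since $E$ is quadratic in $w$), while by Lemma \ref{lt}(i) the Legendre inequality gives $\hat G(w)\ge \hat G(\bar\omega)+f(\bar\omega)h=\hat G(\bar\omega)+\bar\psi h$ pointwise, hence $-\int_D\hat G(w)\,dx\le -\int_D\hat G(\bar\omega)\,dx-\int_D\bar\psi h\,dx$. Combining and using the cancellation of the linear terms,
\[
\mathcal J(w)-\mathcal J(\bar\omega)\le \tfrac12\int_D h\,\mathcal P h\,dx-\Big(\int_D\hat G(w)-\hat G(\bar\omega)-\bar\psi h\,dx\Big).
\]
Setting $\varphi=\mathcal P h$ (so $-\Delta\varphi=h$, and $\varphi\in\mathcal Y$ since $h$ has zero circulation contributions), the energy term is $\tfrac12\int_D|\nabla\varphi|^2\,dx$, and I want to dominate it by the coercive form in \eqref{ssmmy2}. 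The strategy is to show the remainder Legendre defect controls $\tfrac12\int_D g'(\bar\psi)\varphi^2\,dx$ from below near $\bar\omega$, after which \eqref{ssmmy2} yields $\mathcal J(w)-\mathcal J(\bar\omega)\le -\tfrac{\delta_0}{2}\int_D\varphi^2\,dx\le 0$ with strict negativity unless $\varphi\equiv0$, i.e. $h\equiv0$.

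The main obstacle I anticipate is precisely this last matching of the Legendre defect against $\tfrac12\int_D g'(\bar\psi)\varphi^2\,dx$. The clean convexity estimate $\hat G(w)-\hat G(\bar\omega)-\bar\psi h\ge \tfrac{1}{2\max g'}h^2$ gives control in terms of $\|h\|_{L^2}^2=\|\Delta\varphi\|_{L^2}^2$, not $\int g'(\bar\psi)\varphi^2$, so converting between a bound on the vorticity $h$ and a bound on the stream perturbation $\varphi$ requires care; a purely $L^2$-in-$h$ estimate is too weak to close against $\int|\nabla\varphi|^2$ directly. The supporting-functional idea of Wolansky and Ghil is designed to circumvent this: rather than a pointwise Taylor expansion, one replaces $E(\cdot,\mathbf a)$ near $\bar\omega$ by a supporting (tangent) functional that is linear plus the exact quadratic $\tfrac12\int h\mathcal P h$, and bounds the Casimir defect using the strict convexity of $\hat G$ uniformly on the (bounded) range of $\bar\omega$. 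I would therefore carry out the estimate by first restricting to a small $L^p$-ball so that $w$ stays in a fixed bounded pointwise range (using that rearrangements preserve the essential range), apply the uniform strong convexity of $\hat G$ there, and then invoke \eqref{ssmmy2} together with the Poincaré-type equivalence of $\|\Delta\varphi\|_{L^2}$ and $\|\varphi\|_{H^2}$ from Lemma \ref{propd1} to upgrade the $L^2$ control to the strict inequality \eqref{aspthm2}.
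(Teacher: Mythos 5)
Your framework matches the paper's: the same energy--Casimir functional $EC$, the same observation that the Casimir is constant on $\mathcal R_{\bar\omega}$, and the exact identity $\mathcal J(w)-\mathcal J(\bar\omega)=\tfrac12\int_D h\,\mathcal Ph\,dx-\int_D\bigl[\hat G(w)-\hat G(\bar\omega)-\bar\psi h\bigr]dx$ are all correct, and you correctly diagnose that the naive convexity bound $\hat G(w)-\hat G(\bar\omega)-\bar\psi h\ge\tfrac{1}{2\max g'}h^{2}$ only closes under Arnold's condition \eqref{max0}. But your proposed repair does not close the gap, for two reasons. First, there is a sign error in how \eqref{ssmmy2} is applied: if the Legendre defect dominates $\tfrac12\int_D g'(\bar\psi)\varphi^{2}dx$, all you obtain is $\mathcal J(w)-\mathcal J(\bar\omega)\le\tfrac12\int_D|\nabla\varphi|^{2}-g'(\bar\psi)\varphi^{2}dx$, and \eqref{ssmmy2} bounds this right-hand side from \emph{below} by $\tfrac{\delta_0}{2}\int_D\varphi^{2}dx\ge0$; it cannot produce the negative upper bound $-\tfrac{\delta_0}{2}\int_D\varphi^{2}dx$. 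What the argument actually requires is that the defect beat the \emph{full} energy quadratic, i.e. defect $\ge\tfrac12\int_D|\nabla\varphi|^{2}dx+\tfrac{\delta_0}{2}\int_D\varphi^{2}dx$, not merely $\tfrac12\int_D g'(\bar\psi)\varphi^{2}dx$. Second, the tool you invoke for the defect --- ``uniform strong convexity of $\hat G$ on the bounded range of $\bar\omega$'' --- is exactly the estimate you already (correctly) rejected, and it is in any case unavailable here: only $g'(\bar\psi)\ge0$ is assumed, so $\hat G$ need not be twice differentiable, and $h=w-\bar\omega$ is small only in $L^{p}$, never pointwise (rearrangements close in $L^{p}$ are not close in $L^{\infty}$), so no pointwise second-order expansion of $\hat G(w)$ about $\bar\omega$ with uniformly small remainder exists.

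The missing idea is that the supporting functional must transfer the Casimir to the stream-function side \emph{before} any expansion. By Fenchel's inequality (Lemma \ref{lt}(i)) applied at the perturbed stream function, $(\mathcal Pw+h_{\mathbf a})w-\hat G(w)\le G(\mathcal Pw+h_{\mathbf a})$, one gets $EC(w)\le\mathcal D(w):=-\tfrac12\int_D w\mathcal Pw\,dx+\int_D G(\mathcal Pw+h_{\mathbf a})dx+\tfrac12\sum_{i,j=1}^{N}q_{ij}a_ia_j$ (the paper's \eqref{glae70}), with equality at $\bar\omega$ because $\bar\omega=g(\bar\psi)$. Note the \emph{minus} sign in front of the quadratic term: expanding $\mathcal D$ at $\bar\omega$ is now legitimate, since $G\in C^{2}$ and $\varphi=\mathcal P\phi\to0$ \emph{uniformly} when $\phi\to0$ in $L^{p}$ (boundedness of $\mathcal P:L^{p}\to W^{2,p}\hookrightarrow C(\bar D)$), and it yields $\mathcal D(\bar\omega+\phi)-\mathcal D(\bar\omega)=-\tfrac12\left[\int_D\phi\mathcal P\phi\,dx-\int_D g'(\bar\psi)(\mathcal P\phi)^{2}dx\right]+\int_D\mathfrak r(\mathcal P\phi)(\mathcal P\phi)^{2}dx$. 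Here \eqref{ssmmy2} enters with the correct sign and gives the bound $\le-\tfrac{\delta_0}{2}\int_D(\mathcal P\phi)^{2}dx+o\!\left(\int_D(\mathcal P\phi)^{2}dx\right)<0$ for small nonzero perturbations, which is exactly the isolation statement. Equivalently: Fenchel must be applied at $\mathcal Pw+h_{\mathbf a}$ rather than at $\bar\psi$; as written, your chain of inequalities cannot reach the conclusion.
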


\begin{remark}
 By Lemma \ref{propb2}(ii)(iii), the condition \eqref{ssmmy2} can also be written as
  \begin{equation}\label{ssmmy3}
 \int_{D}\phi\mathcal P\phi dx-\int_{D} g'(\bar\psi)(\mathcal P\phi)^{2}dx\geq \delta_{0}\int_{D}(\mathcal P\phi)^{2}dx,\quad \forall\, \phi\in L^{2}(D).
 \end{equation}
 \end{remark}

The strategy of proving Theorem \ref{thm20} is as follows. First we define a functional  $EC$, which is equal to $E(\cdot,\mathbf a)$ plus some constant on $\mathcal R_{\bar\omega}$. Thus the problem can be reduced to proving that  $\bar\omega$ is an isolated local maximizer of $EC$ relative to $\mathcal R_{\bar\omega}.$ Then we construct a supporting functional $\mathcal D$, which is above $EC$, but touches $EC$ at $\bar\omega$.  Finally we show that  $\bar\omega$ is an isolated local maximizer of  $\mathcal D$ under the condition \eqref{ssmmy2}.

In order to define $EC$ and the supporting functional, we need some preparations.  Denote
\begin{equation}\label{zbj11}
\bar m=\min_{\bar D}\bar\psi,\quad \bar M=\max_{\bar D}\bar\psi.
\end{equation}
Since any change of $g$ outside the interval $[\bar m,\bar M]$ makes no difference to the validity of Theorem \ref{thm20},  we can redefine $g$ outside  $[\bar m,\bar M]$ such that
\begin{equation}\label{gco1}
g\in C^{1}(\mathbb R),
\end{equation}
\begin{equation}\label{gco2}
\mbox{$ g$ is  increasing,}
\end{equation}
\begin{equation}\label{gco3}
\mbox{$\lim_{s\to+\infty}\frac{ g(s)}{s}=c_1,\quad \lim_{s\to-\infty}\frac{ g(s)}{s}=c_2$ for some positive constants $c_1,c_2$.}
\end{equation}
This is doable by Lemma \ref{mM}.
 Denote
\[G(s)=\int_{0}^{s}g(\tau)d\tau,\quad s\in\mathbb R.\]
Let $\hat G$ be the Legendre transform of $G$. By Lemma \ref{lt}(ii), $\hat G$ is local Lipschitz continuous on $\mathbb R$.

 Define the energy-Casimir functional
 \begin{equation}\label{glav1}
 EC(w)=E(w,\mathbf a)-\int_D\hat G(w)dx,\quad w\in \mathcal R_{\bar\omega}.
 \end{equation}
 Note that $\mathbf a$ is fixed through this section, so we can avoid the longer notation $EC(w,\mathbf a).$  In the definition of $EC$, the first term is the kinetic energy of the fluid, and the second term is called the Casimir functional.

 Define
  \begin{equation}\label{glae70}
  D(w)=-\frac{1}{2}\int_{D}w\mathcal Pwdx+\int_{D}G(\mathcal Pw+h_{\mathbf a})+\frac{1}{2}\sum_{i,j=1}^Nq_{ij}a_ia_j.
  \end{equation}

\begin{lemma}\label{ymys00}
$\mathcal D$ is a supporting functional of $EC$ at $\bar\omega,$ i.e.,
\begin{itemize}
\item[(i)] $\mathcal D(w)\geq EC(w)$ for any $w\in\mathcal R_{\bar\omega};$
\item[(ii)]$\mathcal D(\bar\omega)=EC(\bar\omega).$
\end{itemize}
\end{lemma}

\begin{proof}
Recalling the definition of $E(w,\mathbf a)$ (see \eqref{kevv0}), we have
\[EC(w)=\frac{1}{2}\int_{D}w\mathcal Pwdx+\int_{D}h_{\mathbf a}wdx+\frac{1}{2}\sum_{i,j=1}^Nq_{ij}a_ia_j-\int_{D}\hat G(w)dx,\]
By Lemma \ref{lt}(i), we have that
 \begin{equation}\label{ymys0}
\begin{split}
EC(w)&=-\frac{1}{2}\int_{D}w\mathcal Pwdx+\int_{D}(\mathcal Pw+h_{\mathbf a})w-\hat G(w)dx+\frac{1}{2}\sum_{i,j=1}^Nq_{ij}a_ia_j\\
&\leq -\frac{1}{2}\int_{D}w\mathcal Pwdx+\int_{D}G(\mathcal Pw+h_{\mathbf a})dx+\frac{1}{2}\sum_{i,j=1}^Nq_{ij}a_ia_j\\
&=\mathcal D(w).
\end{split}
\end{equation}
To prove (ii), notice that
\begin{equation}\label{ymys1}
\hat G(\bar\omega)+G(\mathcal P\bar\omega+h_{\mathbf a})=\bar\omega(\mathcal P\bar\omega+h_{\mathbf a}).
\end{equation}
Here we used  Lemma \ref{lt}(i) and the fact that $\bar\omega=g(\mathcal P\bar\omega+h_{\mathbf a}).$
Therefore the inequality in \eqref{ymys0} is in fact an equality if we take $w=\bar\omega$.
\end{proof}

\begin{proof}[Proof of Theorem \ref{thm20}]
Since the Casimir functional is constant on $\mathcal R_{\bar\omega},$ we only need to show that $\bar\omega$ is an isolated local maximizer of $EC$
relative to $\mathcal R_{\bar\omega}$. By Lemma \ref{ymys00},  this can be reduced to proving that $\bar\omega$ is an isolated local maximizer of $\mathcal D$ relative to $\mathcal R_{\bar\omega}.$

Suppose by contradiction that  $\bar\omega$ is not an isolated local maximizer of $\mathcal D$ on $\mathcal R_{\bar\omega}.$   Then there exists a sequence
$\{w_{n}\}\subset\mathcal R_{\bar\omega}$ such that $w_{n}\neq \bar\omega$ for every $n,$ $ w_n$ converges to $\bar\omega$ in $L^{p}(D)$ as  $n\to+\infty,$ and ${\mathcal D}(w_{n})\geq  {\mathcal D}(\bar\omega)$ for every $n$.
For simplicity, denote $\phi_{n}=w_{n}-\bar\omega.$ Then the sequence $\{\phi_n\}$  satisfies
\begin{equation}\label{phi0p00}
\phi_{n}\neq 0,\quad\forall\,n,
\end{equation}
\begin{equation}\label{phi0p}
 \lim_{n\to+\infty}\|\phi_{n}\|_{L^{p}(D)}=0,
\end{equation}
\begin{equation}\label{dayu1}
 {\mathcal D}(\bar\omega+\phi_n)\geq  {\mathcal D}(\bar\omega),\quad \forall\,n.
 \end{equation}
By \eqref{dayu1}, we have that
\begin{equation}\label{glav10}
\begin{split}
-\frac{1}{2}\int_{D}(\bar\omega+\phi_{n})\mathcal P(\bar\omega+\phi_{n})dx&+\int_{D}G(\mathcal P\bar\omega+\mathcal P\phi_{n}+h_{\mathbf a})dx\\
&\geq -\frac{1}{2}\int_{D}\bar\omega \mathcal P\bar\omega dx+\int_{D}G(\mathcal P\bar\omega +h_{\mathbf a})dx,\quad \forall\,n.
\end{split}
\end{equation}
Simplifying \eqref{glav10} gives
\begin{equation}\label{jifenyx0}
\int_{D}\bar\omega\mathcal P\phi_{n}dx+\frac{1}{2}\int_{D}\phi_{n}\mathcal P\phi_{n}dx\leq \int_{D}G(\mathcal P\bar\omega+\mathcal P\phi_{n}+h_{\mathbf a})-G(\mathcal P\bar\omega+h_{\mathbf a})dx,\quad\forall\,n.
\end{equation}
Here we used the symmetry of $\mathcal P$ (see Lemma \ref{eiyoyo}(ii)).
To proceed, we recall Taylor's theorem, i.e.,
\begin{equation}\label{jifenyx01}
G(s+s_{0})=G(s_{0})+g(s_{0})s+\frac{1}{2}g'(s_{0})s^{2}+\mathfrak r(s)s^{2},\quad \forall\,s_{0}, s\in\mathbb R,
\end{equation}
where $\mathfrak r\in C(\mathbb R)$ satisfying   $\mathfrak r(s)\to$ 0 as $s\to0$.
By \eqref{jifenyx01}, and taking into account \eqref{glav5}, the integrand on the right-hand side of \eqref{jifenyx0} can be written as
\begin{equation}\label{jifenyx1}
G(\mathcal P\bar\omega+\mathcal P\phi_{n}+h_{\mathbf a})-G(\mathcal P\bar\omega+h_{\mathbf a})=\bar\omega\mathcal P\phi_{n}+\frac{1}{2}g'(\bar\psi)(\mathcal P\phi_{n})^{2}+\mathfrak r(\mathcal P\phi_{n})(\mathcal P\phi_{n})^{2}.
\end{equation}
Inserting \eqref{jifenyx1} into \eqref{jifenyx0} gives
\begin{equation}\label{jifenyx2}
 \frac{1}{2}\int_{D}\phi_{n}\mathcal P\phi_{n}dx\leq \frac{1}{2}\int_{D}g'(\bar\psi)(\mathcal P\phi_{n})^{2}dx+\int_{D}\mathfrak r(\mathcal P\phi_{n})(\mathcal P\phi_{n})^{2}dx,\quad\forall\,n.
\end{equation}
In combination with \eqref{ssmmy3}, we obtain
\begin{equation}\label{jifenyx3}
 \frac{1}{2}\delta_{0}\int_{D} (\mathcal P\phi_{n})^{2}dx\leq \int_{D}\mathfrak r(\mathcal P\phi_{n})(\mathcal P\phi_{n})^{2}dx,\quad\forall\,n.
\end{equation}

Below we deduce a contradiction.
In view of \eqref{phi0p}, we get $\mathcal P\phi_{n}\to0$ in $C(\bar D)$ by Lemma \ref{eiyoyo}(i) and the Sobolev embedding theorem. Since $\mathfrak r\in C(\mathbb R),$ we further infer that $\mathfrak r(\mathcal P\phi_{n})\to0$ in $C(\bar D)$. Therefore  \eqref{jifenyx3} is impossible since $\mathcal P\phi_{n}\neq 0$ for any $n$ by \eqref{phi0p00}.

\end{proof}

\subsection{Proof of variational characterization}

In this section, we give the proof of Theorem \ref{thm2}.
To begin with, observe that if $\int_{D}g'(\bar \psi)dx=0$, then $g'(\bar\psi)\equiv0$ in $D$ (since $g$ is increasing), which implies that
\[\nabla\bar\omega=\nabla(g(\bar\psi))=g'(\bar\psi)\nabla\bar\psi\equiv 0 \quad\mbox{in }D.\]
Hence in this case $\bar\omega$ must be constant in $D$, and thus the conclusion of Theorem \ref{thm2} is trivial.
For this reason, below we assume that
\begin{equation}\label{gpied0}
\int_{D}g'(\bar \psi)dx>0.
\end{equation}

The first step of the proof is  show that the nonnegative condition \eqref{ssmmy} actually implies certain weak positive definiteness.

\begin{proposition}[Weak positive definiteness]\label{keyprop}
Suppose \eqref{gpied0} holds. Then  there  exists some $\delta_0>0$ such that
\begin{equation}\label{lemc1}
\int_{D}|\nabla u|^2 dx-\int_{D}g'(\bar\psi)u^{2}dx+\frac{\left(\int_{D}g'(\bar\psi)udx\right)^{2}}{\int_{D}g'(\bar\psi)dx}\geq \delta_0\int_{D}u^{2}dx,\quad\forall\,u\in \mathcal Y.
\end{equation}
\end{proposition}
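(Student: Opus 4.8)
The plan is to argue by contradiction, combining a normalized minimizing sequence with a weak compactness argument in $H^1(D)$. Write $m=g'(\bar\psi)\in C(\bar D)$, which is nonnegative by \eqref{min0w}, and abbreviate the left-hand side of \eqref{lemc1} as
\[
Q(u)=\int_{D}|\nabla u|^2\,dx-\int_{D}mu^2\,dx+\frac{\left(\int_{D}mu\,dx\right)^2}{\int_{D}m\,dx}.
\]
Here $Q$ is the sum of $\int_{D}|\nabla u|^2-mu^2\,dx$, which is nonnegative on $\mathcal Y$ by \eqref{ssmmy}, and the manifestly nonnegative last term, so $Q\geq 0$ on $\mathcal Y$. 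If the claimed spectral gap failed for every $\delta_0>0$, then one could select $u_n\in\mathcal Y$ with $\|u_n\|_{L^2(D)}=1$ and $Q(u_n)\to 0$. Since $Q(u_n)$ is a sum of two nonnegative terms tending to $0$, both terms vanish in the limit; in particular $\int_{D}mu_n\,dx\to 0$, and $\int_{D}|\nabla u_n|^2\,dx\leq Q(u_n)+\|m\|_{L^\infty(D)}$ is bounded, so $\{u_n\}$ is bounded in $H^1(D)$.

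Passing to a subsequence, I would take $u_n\rightharpoonup u$ weakly in $H^1(D)$ and, by compact embedding, $u_n\to u$ strongly in $L^2(D)$, so that $\|u\|_{L^2(D)}=1$ and (using boundedness of $m$) $\int_{D}mu\,dx=\lim_{n\to+\infty}\int_{D}mu_n\,dx=0$. By Proposition \ref{propb1}(ii) the space $\mathcal X$ is weakly closed in $H^1(D)$, hence $u\in\mathcal X$. Weak lower semicontinuity of the Dirichlet energy together with strong $L^2$ convergence then gives
\[
\int_{D}|\nabla u|^2-mu^2\,dx\leq\liminf_{n\to+\infty}\left(\int_{D}|\nabla u_n|^2-mu_n^2\,dx\right)=0.
\]

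Here lies the main obstacle: the weak limit $u$ need not belong to $\mathcal Y$, which is \emph{not} closed under weak $H^1$-convergence because the normal-flux conditions degenerate in the limit, so \eqref{ssmmy} cannot be applied to $u$ directly. This is precisely where Corollary \ref{glav2022} (taking $c=-m$) is essential: it upgrades \eqref{ssmmy} to $\int_{D}|\nabla u|^2-mu^2\,dx\geq 0$ on the entire weakly closed space $\mathcal X$. Combined with the previous display, this forces $\int_{D}|\nabla u|^2-mu^2\,dx=0$. Consequently $u$ is a minimizer of the variational problem \eqref{var1} with $c=-m$; since the infimum $\lambda_c$ is nonnegative by Corollary \ref{glav2022} while $u$ realizes the value $0$, we have $\lambda_c=0$. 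Proposition \ref{prop1}(ii),(iv) then yields $u,u^+,u^-\in\mathcal Y$ with
\[
-\Delta u^{\pm}=m\,u^{\pm}\geq 0\quad\text{a.e. in }D.
\]

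To close the argument I would invoke the strong maximum principle for superharmonic functions, Lemma \ref{smple}, applied to the nonnegative functions $u^+$ and $u^-$: each is either identically zero or strictly positive throughout $D$. Since $\|u\|_{L^2(D)}=1$ forces $u\not\equiv 0$, and $u^+u^-=0$, exactly one of $u^+,u^-$ vanishes identically; thus $u$ has a strict sign in $D$, say $u>0$ after replacing $u$ by $-u$ if necessary. But then, using $m\geq 0$ together with \eqref{gpied0}, which guarantees that $m>0$ on a set of positive measure, I obtain $\int_{D}mu\,dx>0$, contradicting the identity $\int_{D}mu\,dx=0$ established above. This contradiction proves the existence of the required $\delta_0>0$.
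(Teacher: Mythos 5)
Your proposal is correct and follows essentially the same route as the paper's own proof: a contradiction argument with an $L^2$-normalized sequence in $\mathcal Y$, weak $H^1$ compactness giving a limit $u\in\mathcal X$ with $\|u\|_{L^2(D)}=1$ and $\int_D g'(\bar\psi)u\,dx=0$, the upgrade of \eqref{ssmmy} from $\mathcal Y$ to the weakly closed space $\mathcal X$ via Corollary \ref{glav2022} (which is exactly how the paper handles the non-closedness of $\mathcal Y$), Proposition \ref{prop1} to place the limit back in $\mathcal Y$ with $-\Delta u^{\pm}=g'(\bar\psi)u^{\pm}\geq 0$, and Lemma \ref{smple} to force a strict sign contradicting $\int_D g'(\bar\psi)u\,dx=0$ under \eqref{gpied0}. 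Your explicit dichotomy via $u^+u^-=0$ is just a slightly more spelled-out version of the paper's without-loss-of-generality step and changes nothing of substance.
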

\begin{remark}
By Proposition \ref{propb2}(ii)(iii), \eqref{lemc1} is equivalent to
\begin{equation}\label{lemc1220}
\int_{D}\phi\mathcal P\phi dx-\int_{D}g'(\bar\psi)(\mathcal P\phi)^{2}dx+\frac{\left(\int_{D}g'(\bar\psi)\mathcal P\phi dx\right)^{2}}{\int_{D}g'(\bar\psi)dx}\geq \delta_0\int_{D}(\mathcal P\phi)^{2}dx,\quad\forall\,\phi\in L^2(D).
\end{equation}
\end{remark}

\begin{proof}
We prove \eqref{lemc1} by contradiction.
Assume that, for every positive integer $n$, there exists $u_{n}\in \mathcal Y$, $\|u_n\|_{L^2(D)}=1,$ such that
\begin{equation}\label{lemc2}
\int_{D}|\nabla u_n|^2 dx-\int_{D}g'(\bar\psi)u_n^{2}dx+\frac{\left(\int_{D}g'(\bar\psi)u_n dx\right)^{2}}{\int_{D}g'(\bar\psi)dx}<\frac{1}{n}.
\end{equation}

By the condition \eqref{ssmmy},
\begin{equation}\label{lemc5}
\int_{D}|\nabla u_n|^2 dx-\int_{D}g'(\bar\psi)u_n^{2}dx\geq 0,\quad\forall\,n.
\end{equation}
Hence, from \eqref{lemc2} and \eqref{lemc5},  we have  that
\begin{equation}\label{lemc6}
\lim_{n\to+\infty}\int_{D}|\nabla u_n|^2 dx-\int_{D}g'(\bar\psi)u_n^{2}dx=0,
\end{equation}
\begin{equation}\label{lemc7}
\lim_{n\to+\infty} \int_{D}g'(\bar\psi)u_{n} dx=0.
\end{equation}
From \eqref{lemc6}  and the assumption that $\|u_n\|_{L^2(D)}=1$ for every $n$, we deduce that  $\{u_{n}\}$ is  bounded  in $H^{1}(D)$. Hence, up to a subsequence, $\{u_n\}$ has a weak limit, say $\tilde u$, in $H^1(D).$ Since $H^1(D)$ is compactly embedded in $L^2(D)$, it holds that
\begin{equation}\label{glavd1}
\|\tilde u\|_{L^2(D)}=1.
\end{equation}
By \eqref{lemc6} and \eqref{lemc7}, we have that
\begin{equation}\label{lemc8}
\int_{D}|\nabla\tilde u|^{2} dx-\int_{D}g'(\bar\psi)\tilde u^{2}dx\leq \liminf_{n\to+\infty}\int_{D}|\nabla u_{n}|^{2} dx-\int_{D}g'(\bar\psi)u_n^{2}dx=0,
\end{equation}
\begin{equation}\label{lemc9}
 \int_{D}g'(\bar\psi)\tilde u dx=0.
\end{equation}
In addition, since $\{u_{n}\}\subset \mathcal Y\subset\mathcal X$ and $\mathcal X$ is weakly closed in $H^{1}(D)$ (see Proposition \ref{propb1}(ii)), we see that \begin{equation}\label{lemc666}
\tilde u\in\mathcal X.
\end{equation}

On the other hand, applying Corollary \ref{glav2022} (taking $c=-g'(\bar\psi)$ therein), and taking into account \eqref{ssmmy}, we obtain
\begin{equation}\label{lemc10}
\int_{D}|\nabla u|^{2} dx-\int_{D}g'(\bar\psi) u^{2}dx\geq 0,\quad\forall\,u\in\mathcal X.
\end{equation}
Therefore, by \eqref{lemc8}, \eqref{lemc666} and \eqref{lemc10}, we infer that $\tilde u$ is a minimizer of the following minimization problem
\[\inf\left\{\int_{D}|\nabla u|^{2}-g'(\bar\psi)u^{2}dx\mid u\in \mathcal X,\,\, \|u\|_{L^{2}(D)}=1\right\}\]
with
\begin{equation}\label{lemc11}
\int_{D}|\nabla\tilde u|^{2} dx-\int_{D}g'(\bar\psi)\tilde u^{2}dx=0,
\end{equation}
Since $\tilde u\not\equiv0$ (by \eqref{glavd1}), without loss  of generality, we assume that $\tilde u^+\not\equiv 0$ (the case $\tilde u^-\not\equiv 0$ is similar).
By Proposition \ref{prop1}(iv),  $\tilde u^+\in H^2(D)$  and satisfies
 \[-\Delta \tilde u^+=g'(\bar\psi)\tilde u^+\geq 0.\]
So $\tilde u^+$ is  nonnegative and superharmonic. Then applying Lemma \ref{smple} gives
\[\tilde u^+>0   \quad\mbox{in }D,\]
which implies
\[\tilde u>0  \quad \mbox{in }D.\]
This obviously contradicts \eqref{lemc9} since $g'(\bar\psi)\geq 0$ in $D$ and $g'(\bar\psi)\not\equiv 0.$

\end{proof}

With Proposition \ref{keyprop} at hand, we are ready to prove Theorem \ref{thm2}.
The basic idea  is very similar to the one used in the proof of Theorem \ref{thm20}, but the computations are more involved. First we construct a new supporting functional, which is better (in the sense that it is   ``closer" to $EC$) than the one in Section \ref{subsec51}. Then we show that weak positive definiteness obtained in Proposition \ref{keyprop} ensures that the new supporting functional attains an isolated local maximum at $\bar\omega$.

Let $\bar m,$ $\bar M$ be defined as in Section \ref{subsec51}. Redefine $g$ outside the interval  $[\bar m,\bar M]$ according to Lemma \ref{mM} such that
\begin{equation}\label{gco11}
g\in C^{1}(\mathbb R),
\end{equation}
\begin{equation}\label{gco22}
\mbox{$ g$ is strictly increasing in $(-\infty, \bar m]$ and $[\bar M,+\infty)$,}
\end{equation}
\begin{equation}\label{gco33}
\mbox{$\lim_{s\to+\infty}\frac{ g(s)}{s}=c_1,\quad \lim_{s\to-\infty}\frac{ g(s)}{s}=c_2$ for some positive constants $c_1,c_2$.}
\end{equation}
 Note that  we require  $g$ to be \emph{strictly} increasing in \eqref{gco22}. This is mainly used in the proof of Lemma \ref{lm57} below.

 Let $G, \hat G$ and $EC$ be defined as in Section \ref{subsec51}.
 Define
 \begin{equation}\label{defds}
 D_{s}(w)=-\frac{1}{2}\int_{D}w\mathcal Pwdx+\int_{D}G(\mathcal Pw+h_{\mathbf a}-s)+s\mathsf m+\frac{1}{2}\sum_{i,j=1}^Nq_{ij}a_ia_j,
 \end{equation}
 where $s\in\mathbb R$ is a parameter, and $\mathsf m$ is a fixed real number given by
  \begin{equation}\label{msf108}
  \mathsf m=\int_{D}\bar\omega dx.
  \end{equation}
Define
  \begin{equation}\label{defdht}
  \hat{\mathcal D}(w)=\inf_{s\in\mathbb R}\mathcal D_{s}(w).
  \end{equation}

\begin{lemma}\label{lm56}
For any $w\in\mathcal R_{\bar\omega},$ there exists some  $  \mu\in\mathbb R,$ depending on $w$, such that
$\hat{\mathcal D}(w)=\mathcal D_{ \mu}(w).$ Moreover, such  $\mu$ necessarily satisfies
\begin{equation}\label{mude2}
\int_{D}g(\mathcal Pw+h_{\mathbf a}- \mu)dx=\mathsf m.
\end{equation}
\end{lemma}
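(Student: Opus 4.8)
The plan is to prove this by showing that $\mathcal{D}_s(w)$, viewed as a function of the real parameter $s$, is continuous and tends to $+\infty$ as $s \to \pm\infty$, so that the infimum in \eqref{defdht} is attained at some $\mu \in \mathbb{R}$, and then to derive \eqref{mude2} as the first-order (stationarity) condition at the minimizer. First I would isolate the only $s$-dependent part of $\mathcal{D}_s(w)$, namely
\[
F(s) := \int_D G(\mathcal{P}w + h_{\mathbf{a}} - s)\,dx + s\mathsf{m},
\]
since the terms $-\tfrac12\int_D w\mathcal{P}w\,dx$ and $\tfrac12\sum_{i,j}q_{ij}a_ia_j$ are constants in $s$. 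Thus $\mathcal{D}_s(w) = F(s) + \text{const}$, and it suffices to study $F$.

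Next I would establish that $F$ is continuously differentiable in $s$ with
\[
F'(s) = -\int_D g(\mathcal{P}w + h_{\mathbf{a}} - s)\,dx + \mathsf{m},
\]
using $G' = g$ and differentiation under the integral sign (justified since $\mathcal{P}w + h_{\mathbf{a}} \in C(\bar D)$ is bounded and $g \in C^1$, so the integrand and its $s$-derivative are uniformly controlled on compact $s$-intervals over the bounded domain $D$). The key quantitative input is the growth condition \eqref{gco33}: because $g(\tau)/\tau \to c_1 > 0$ as $\tau \to +\infty$ and $\to c_2 > 0$ as $\tau \to -\infty$, the function $g$ is unbounded above and below, and $G(\tau)$ grows superlinearly (like $\tfrac{c_1}{2}\tau^2$ and $\tfrac{c_2}{2}\tau^2$ at $\pm\infty$). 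As $s \to +\infty$ the argument $\mathcal{P}w + h_{\mathbf{a}} - s \to -\infty$ uniformly on $\bar D$, so the integral term $\int_D G(\cdots)\,dx \to +\infty$ faster than the linear term $s\mathsf{m}$ can counteract it; symmetrically for $s \to -\infty$. Hence $F(s) \to +\infty$ in both directions, and being continuous, $F$ attains a global minimum at some finite $\mu$, giving $\hat{\mathcal{D}}(w) = \mathcal{D}_\mu(w)$.

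Then the stationarity condition $F'(\mu) = 0$ reads exactly
\[
\int_D g(\mathcal{P}w + h_{\mathbf{a}} - \mu)\,dx = \mathsf{m},
\]
which is \eqref{mude2}. It remains only to record that $\mathsf{m} = \int_D \bar\omega\,dx$ by \eqref{msf108}, so the right-hand side is the stated constant. I would note here that the superlinear growth of $G$ (equivalently, that $g$ is genuinely unbounded, coming from \eqref{gco33}) is what guarantees coercivity and hence both existence of the minimizer and attainment at an interior critical point rather than at infinity.

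The main obstacle I anticipate is the rigorous justification that the infimum is attained and lands at a point where $F' = 0$, rather than merely being approached — this hinges entirely on the coercivity estimate $F(s) \to +\infty$, so the care lies in making the asymptotic comparison between the superquadratic growth of $\int_D G(\mathcal{P}w + h_{\mathbf{a}} - s)\,dx$ and the linear term $s\mathsf{m}$ precise and uniform over $w \in \mathcal{R}_{\bar\omega}$ (noting that $\|\mathcal{P}w + h_{\mathbf{a}}\|_{C(\bar D)}$ is uniformly bounded on the rearrangement class, since $\|w\|_{L^p(D)}$ is constant on $\mathcal{R}_{\bar\omega}$ and $\mathcal{P}$ maps $L^p(D)$ boundedly into $C(\bar D)$ by Lemma \ref{eiyoyo}(i) and Sobolev embedding). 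A minor secondary point is confirming differentiation under the integral sign, but that is routine given $g \in C^1$ and the boundedness of the domain and the continuous integrand.
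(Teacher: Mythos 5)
Your proposal is correct and follows essentially the same route as the paper's proof: continuity (indeed $C^1$ regularity) of $s\mapsto\mathcal D_s(w)$, coercivity as $|s|\to+\infty$ from the growth condition \eqref{gco33} (the paper phrases this as $\liminf_{|s|\to+\infty}G(s)/s^{2}>0$), attainment of the infimum at a finite $\mu$, and the first-order condition yielding \eqref{mude2}. Your isolation of the $s$-dependent part $F(s)$ is only a cosmetic restructuring, and the uniformity over $w\in\mathcal R_{\bar\omega}$ you worry about at the end is not needed, since the lemma fixes $w$ and allows $\mu$ to depend on it.
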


\begin{proof}
Fix $w\in\mathcal R_{\bar\omega}$. It is easy to check that $\mathcal D_{s}(w)$ is  continuously differentiable with respect to $s$. By the assumption \eqref{gco33},
\begin{equation*}
\liminf_{|s|\to+\infty}\frac{G(s)}{s^{2}}>0,
\end{equation*}
which implies that
\[\lim_{s\to+\infty}\mathcal D_{ s}(w)=+\infty,\quad \lim_{s\to-\infty}\mathcal D_{ s}(w)=+\infty.\]
Therefore, by the intermediate value theorem,  there exists some $\mu\in\mathbb R$ such that
\[\mathcal D_{\mu}(w)=\inf_{s\in\mathbb R}\mathcal D_{s}(w).\]
Besides, for any such $\mu$, it holds that
 \[\frac{d\mathcal D_{s}(w)}{ds}\bigg|_{s=\mu}=0,\]
which gives \eqref{mude2}.

\end{proof}

In Lemma \ref{lm56}, the map $w\mapsto\mu$ is not single-valued in general. However, if $w=\bar\omega,$ we have the following result.

\begin{lemma}\label{lm57}
If  $\mu\in\mathbb R$ satisfies
\begin{equation}\label{mude3}
\int_{D}g(\mathcal P\bar\omega+h_{\mathbf a}- \mu)dx=\mathsf m,
\end{equation}
then $\mu=0.$
\end{lemma}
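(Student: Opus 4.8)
The plan is to reduce the statement to the strict monotonicity of a single real-variable function. Recall from \eqref{glav5} that $\bar\omega=g(\mathcal P\bar\omega+h_{\mathbf a})$, and write $\bar\psi=\mathcal P\bar\omega+h_{\mathbf a}$, so that $\bar\omega=g(\bar\psi)$ and, by \eqref{msf108}, $\mathsf m=\int_D\bar\omega\,dx=\int_D g(\bar\psi)\,dx$. Thus the hypothesis \eqref{mude3} reads $\int_D g(\bar\psi-\mu)\,dx=\int_D g(\bar\psi)\,dx$. Introducing
\[
F(\mu)=\int_D g(\bar\psi(x)-\mu)\,dx,
\]
the conclusion $\mu=0$ will follow once I show that $F(\mu)<F(0)$ for every $\mu>0$ and $F(\mu)>F(0)$ for every $\mu<0$; in particular $\mu=0$ is then the only value with $F(\mu)=\mathsf m$.

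First I would treat the case $\mu>0$. Since $g$ is increasing, $g(\bar\psi(x)-\mu)\le g(\bar\psi(x))$ pointwise, so $F(\mu)\le F(0)$; the point is to upgrade this to a strict inequality. Here I would exploit that $g$ is \emph{strictly} increasing on $(-\infty,\bar m]$, as guaranteed by \eqref{gco22}. Consider the set $A_\mu=\{x\in D:\bar\psi(x)<\bar m+\mu\}$. For $x\in A_\mu$ one has $\bar\psi(x)-\mu<\bar m\le\bar\psi(x)$, so the interval $[\bar\psi(x)-\mu,\bar m]$ has positive length and is contained in $(-\infty,\bar m]$, on which $g$ is strictly increasing; hence $g(\bar\psi(x)-\mu)<g(\bar m)\le g(\bar\psi(x))$ for every $x\in A_\mu$. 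The case $\mu<0$ is entirely symmetric, using the strict monotonicity of $g$ on $[\bar M,+\infty)$ together with the set $\{x\in D:\bar\psi(x)>\bar M-|\mu|\}$.

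It remains to check that $A_\mu$ has positive Lebesgue measure, which is the only step requiring genuine care. Since $\bar\psi\in C^{2}(\bar D)$ attains its minimum $\bar m$ on the compact set $\bar D$, the set $\{x\in\bar D:\bar\psi(x)<\bar m+\mu\}$ is relatively open in $\bar D$ and nonempty (it contains a minimum point), so it meets the open set $D$ in a nonempty open subset of $\mathbb R^2$, which therefore has positive measure; the same reasoning applies to the set used when $\mu<0$. Integrating the strict pointwise inequality over $A_\mu$, and the non-strict inequality over $D\setminus A_\mu$, yields $F(\mu)<F(0)=\mathsf m$ for $\mu>0$, contradicting \eqref{mude3} unless $\mu\le 0$; the symmetric argument forces $\mu\ge 0$, whence $\mu=0$. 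The main obstacle is precisely this positivity-of-measure step: it is where the continuity of $\bar\psi$ and the strictness built into \eqref{gco22} are both essential, since a merely increasing $g$ would only make $F$ non-increasing and would fail to pin down $\mu$.
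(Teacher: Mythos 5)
Your proof is correct and follows essentially the same route as the paper's: both reduce (after WLOG taking $\mu>0$) to the pointwise bound $g(\bar\psi-\mu)\le g(\bar\psi)$ from monotonicity, and then use the strict monotonicity of the redefined $g$ on $(-\infty,\bar m]$ near the minimum of $\bar\psi$ to force the strict integral inequality $\int_D g(\bar\psi-\mu)\,dx<\mathsf m$, contradicting \eqref{mude3}. The only difference is cosmetic: the paper obtains strictness by comparing the minima of the two continuous functions and invoking $g(\bar\psi-\mu)\not\equiv g(\bar\psi)$, whereas you exhibit an explicit positive-measure set $A_\mu$ on which the pointwise inequality is strict --- a slightly more explicit justification of the same step.
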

\begin{proof}
Suppose by contradiction that $\mu\neq 0$. Without loss of generality, assume that $\mu>0$. Since $g$ is increasing, we have
\begin{equation}\label{mude38d}
 g(\mathcal P\bar\omega+h_{\mathbf a}- \mu) \leq  g(\mathcal P\bar\omega+h_{\mathbf a}) \quad \mbox{in }D.
 \end{equation}
On the other hand,   by the assumption \eqref{gco22}, we have that
\begin{equation*}\label{mude39d}
\min_{x\in\bar D}g(\mathcal P\bar\omega+h_{\mathbf a}- \mu)=\min_{x\in\bar D}g(\bar\psi- \mu)=g(\bar m-\mu)<g(\bar m)=\min_{x\in\bar D}g(\bar\psi)=\min_{x\in\bar D}g(\mathcal P\bar\omega+h_{\mathbf a}),
 \end{equation*}
which implies that
\begin{equation}\label{mude40d}
g(\mathcal P\bar\omega+h_{\mathbf a}- \mu)\not\equiv g(\mathcal P\bar\omega+h_{\mathbf a}).
\end{equation}
 Combining \eqref{mude38d} and \eqref{mude40d}, we infer that
 \begin{equation}\label{mude41d}
 \int_Dg(\mathcal P\bar\omega+h_{\mathbf a}- \mu)dx<\int_Dg(\mathcal P\bar\omega+h_{\mathbf a})dx=\mathsf m,
 \end{equation}
 a contradiction.

\end{proof}

\begin{lemma}\label{lm58}
$\hat{\mathcal D}$ is a supporting functional of $EC$ at $\bar\omega$,  i.e.,
\begin{itemize}
\item[(i)] $\hat{\mathcal D}(w)\geq EC(w)$ for any $w\in\mathcal R_{\bar\omega};$
\item[(ii)]$\hat{\mathcal D}(\bar\omega)=EC(\bar\omega).$
\end{itemize}
\end{lemma}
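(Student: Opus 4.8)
The plan is to prove both parts by a single direct computation, using the Legendre duality of Lemma \ref{lt}(i) together with the one feature common to all $w\in\mathcal R_{\bar\omega}$, namely that rearrangements preserve the integral: $\int_D w\,dx=\int_D\bar\omega\,dx=\mathsf m$.

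For (i), I would prove the stronger statement that $\mathcal D_s(w)\geq EC(w)$ for \emph{every} $s\in\mathbb R$, so that the infimum over $s$ also dominates $EC(w)$. Expanding $\mathcal D_s(w)-EC(w)$ from the definitions \eqref{defds}, \eqref{kevv0} and \eqref{glav1}, the terms $\tfrac12\sum q_{ij}a_ia_j$ cancel and one is left with
\[
\mathcal D_s(w)-EC(w)=\int_D\hat G(w)\,dx+\int_D G(\mathcal Pw+h_{\mathbf a}-s)\,dx-\int_D w(\mathcal Pw+h_{\mathbf a})\,dx+s\mathsf m.
\]
Now apply the pointwise inequality $\hat G(w)+G(\tau)\geq w\tau$ from Lemma \ref{lt}(i) with $\tau=\mathcal Pw+h_{\mathbf a}-s$ and integrate: this bounds the first two integrals below by $\int_D w(\mathcal Pw+h_{\mathbf a})\,dx-s\int_D w\,dx$. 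The decisive point is that the term $-s\int_D w\,dx$ produced here equals exactly $-s\mathsf m$, precisely because $w$ is a rearrangement of $\bar\omega$; it therefore cancels the $+s\mathsf m$ above and yields $\mathcal D_s(w)-EC(w)\geq0$. Taking the infimum over $s$ gives (i).

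For (ii), the inequality $\hat{\mathcal D}(\bar\omega)\geq EC(\bar\omega)$ is already furnished by (i), so it remains to exhibit one value of $s$ attaining equality. By Lemma \ref{lm56} the infimum defining $\hat{\mathcal D}(\bar\omega)$ is realized at some $\mu$ with $\int_D g(\mathcal P\bar\omega+h_{\mathbf a}-\mu)\,dx=\mathsf m$; Lemma \ref{lm57} forces $\mu=0$, hence $\hat{\mathcal D}(\bar\omega)=\mathcal D_0(\bar\omega)$. Specializing the computation of (i) to $w=\bar\omega$ and $s=0$, the Legendre step becomes an equality: by \eqref{glav5} we have $\bar\omega=g(\mathcal P\bar\omega+h_{\mathbf a})$, which is exactly the equality case $g(\tau)=w$ in Lemma \ref{lt}(i), so $\hat G(\bar\omega)+G(\mathcal P\bar\omega+h_{\mathbf a})=\bar\omega(\mathcal P\bar\omega+h_{\mathbf a})$ holds pointwise. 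Integrating shows $\mathcal D_0(\bar\omega)-EC(\bar\omega)=0$, which is (ii).

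The computation itself is routine once assembled; the step deserving the most care, and the reason the construction works at all, is the cancellation in (i) between $s\mathsf m$ and the term $-s\int_D w\,dx$ coming from the Legendre bound. This hinges both on the choice $\mathsf m=\int_D\bar\omega\,dx$ in \eqref{defds} and on the rearrangement invariance $\int_D w\,dx=\mathsf m$, which is what makes the supporting inequality hold \emph{uniformly} in the parameter $s$ and thereby renders $\hat{\mathcal D}$ a valid, and sharper, supporting functional than the $s=0$ functional $\mathcal D_0$. Lemmas \ref{lm56}--\ref{lm57} serve only to guarantee that this sharper infimum is still attained exactly at the value $s=0$ at which the Legendre inequality is saturated at $\bar\omega$.
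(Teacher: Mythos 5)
Your proposal is correct and takes essentially the same route as the paper: part (i) is the same computation, expanding $\mathcal D_s(w)-EC(w)$ and applying the Legendre inequality of Lemma \ref{lt}(i) with $\tau=\mathcal Pw+h_{\mathbf a}-s$, where the term $-s\int_D w\,dx$ cancels $s\mathsf m$ precisely because $\int_D w\,dx=\mathsf m$ for $w\in\mathcal R_{\bar\omega}$. Part (ii) likewise matches the paper, which uses Lemmas \ref{lm56}--\ref{lm57} to get $\hat{\mathcal D}(\bar\omega)=\mathcal D_0(\bar\omega)$ and then invokes Lemma \ref{ymys00}(ii), whose proof is exactly the equality-case argument (via \eqref{glav5}) that you wrote out inline.
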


\begin{proof}
We first prove (i). Fix $w\in \mathcal R_{\bar\omega}.$ For any $s\in\mathbb R$, we have that
\begin{align}
EC(w)&=\frac{1}{2}\int_{D}w\mathcal Pwdx+\int_{D}h_{\mathbf a}wdx +\frac{1}{2}\sum_{i,j=1}^Nq_{ij}a_ia_j-\int_{D}\hat G(w)dx\\
&=\frac{1}{2}\int_{D}w\mathcal Pwdx+\int_{D}h_{\mathbf a}wdx+\frac{1}{2}\sum_{i,j=1}^Nq_{ij}a_ia_j -\int_{D}\hat G(w)+swdx+s\mathsf m \label{dygd}\\
&=-\frac{1}{2}\int_{D}w\mathcal Pwdx +\int_{D}(\mathcal Pw+h_{\mathbf a}-s)w-\hat G(w)dx+s\mathsf m+\frac{1}{2}\sum_{i,j=1}^Nq_{ij}a_ia_j\\
&\leq -\frac{1}{2}\int_{D}w\mathcal Pwdx +\int_{D} G(\mathcal Pw+h_{\mathbf a}-s)dx+s\mathsf m +\frac{1}{2}\sum_{i,j=1}^Nq_{ij}a_ia_j \label{dygbd}\\
&=\mathcal D_{s}(w).
\end{align}
Note that we used the fact that $\int_{D}wdx=\int_{D}\bar\omega dx=\mathsf m$ (since $w\in\mathcal R_{\bar\omega}$) in \eqref{dygd}, and Lemma \ref{lt}(i) in \eqref{dygbd}. Hence $EC(w)\le\mathcal D(w)$.

Next we prove (ii). By Lemma \ref{lm57} and the obvious fact that
$\mathcal D_0(\bar\omega)= \mathcal D (\bar\omega)$ (recall  \eqref{glae70}),  we have
\[\hat{\mathcal D}(\bar\omega)=\mathcal D (\bar\omega).\]
  Then the desired result follows from Lemma \ref{ymys00}(ii).

\end{proof}

Having made enough preparations, we are ready to complete the proof of Theorem \ref{thm2}.

\begin{proof}[Proof of Theorem \ref{thm2}]

By Lemma \ref{lm58}, it is sufficient to show that $\bar\omega$ is an isolated local minimizer of $\hat{\mathcal D}$ relative to $ \mathcal R_{\bar\omega}.$

By contradiction, assume that there exists a sequence $\{w_{n}\}\subset\mathcal R_{\bar\omega}$ such that
\begin{equation*}\label{pofthm21}
w_{n}\neq \bar\omega\quad\mbox{for all }n,\quad \|w_n-\bar\omega\|_{L^{p}(D)}\to 0 \quad\mbox{as }n\to+\infty,
\quad\hat{\mathcal D}(w_{n})\geq \hat{\mathcal D}(\bar\omega)\quad\mbox{for all }n.
\end{equation*}
For simplicity, denote $\phi_{n}=w_{n}-\bar\omega.$ Then $\{\phi_n\}$ satisfies
\begin{equation}\label{pofthm22}
\phi_{n}\neq 0\quad\mbox{for all }n,
\end{equation}
\begin{equation}\label{pofthm23}
 \|\phi_{n}\|_{L^{p}(D)}\to 0 \quad\mbox{as }n\to+\infty,
\end{equation}
\begin{equation}\label{pofthm24}
\hat{\mathcal D}(\bar\omega+\phi_{n})\geq \hat{\mathcal D}(\bar\omega)\quad\mbox{for all }n.
\end{equation}
In addition, by Lemma \ref{lm56},  there exists $\mu_{n}\in\mathbb R$  such that
\begin{equation}\label{pofthm20}
\hat{\mathcal D}(\bar\omega+\phi_{n})=\mathcal D_{\mu_{n}}(\bar\omega+\phi_{n}),
\end{equation}
with $\mu_{n}$ satisfying
\begin{equation}\label{mucon1}
\int_{D}g(\bar\psi+\mathcal P\phi_n- \mu_{n})dx=\mathsf m.
\end{equation}

From \eqref{pofthm24} and \eqref{pofthm20}, after some simple algebraic operations, we obtain
\begin{equation}\label{pofthm28}
\int_{D}\bar\omega\mathcal P\phi_{n} dx+\frac{1}{2}\int_{D}\phi_{n}\mathcal P\phi_{n}dx\leq \int_{D}G(\bar\psi+\mathcal P\phi_{n}-\mu_{n})-G(\bar\psi)dx+\mu_{n}\mathsf m.
\end{equation}
Here we  used the symmetry of the operator $\mathcal P$ (see Lemma \ref{eiyoyo}(ii)). As in \eqref{jifenyx1}, the integrand on the right-hand side of \eqref{pofthm28} can be written as
\begin{equation}\label{pofthm29}
\begin{split}
&G(\bar\psi+\mathcal P\phi_{n} -\mu_{n})-G(\bar\psi)\\
=&g(\bar\psi)(\mathcal P\phi_{n}-\mu_{n})+\frac{1}{2}g'(\bar\psi)(\mathcal P\phi_{n}-\mu_{n})^{2}
 +\mathfrak r(\mathcal P\phi_{n}-\mu_{n})(\mathcal P\phi_{n}-\mu_{n})^{2}\\
 =&\bar\omega(\mathcal P\phi_{n}-\mu_{n})+\frac{1}{2}g'(\bar\psi)(\mathcal P\phi_{n}-\mu_{n})^{2}
 +\mathfrak r(\mathcal P\phi_{n}-\mu_{n})(\mathcal P\phi_{n}-\mu_{n})^{2},
\end{split}
\end{equation}
where $\mathfrak r\in C(\mathbb R)$ satisfying $\mathfrak r(s)\to 0$ as $s\to 0.$ 
Inserting \eqref{pofthm29} into \eqref{pofthm28}, and taking into account \eqref{msf108}, we get
\begin{equation}\label{pofthm210}
 \frac{1}{2}\int_{D}\phi_{n}\mathcal P\phi_{n}dx\leq  \frac{1}{2}\int_{D}g'(\bar\psi)(\mathcal P\phi_{n}-\mu_{n})^{2}dx
 +\int_{D}\mathfrak r(\mathcal P\phi_{n}-\mu_{n})(\mathcal P\phi_{n}-\mu_{n})^{2}dx.
\end{equation}
 For convenience, we write \eqref{pofthm210} as follows
\begin{equation}\label{pofthm2101}
\begin{split}
 & \int_{D}\phi_{n}\mathcal P\phi_{n}dx- \int_{D}g'(\bar\psi)(\mathcal P\phi_{n})^{2}dx\\
 \leq &  \mu_{n}^{2}\int_{D}g'(\bar\psi)dx-2\mu_{n}\int_{D}g'(\bar\psi)\mathcal P\phi_{n}dx
 +\int_{D}\vartheta_{n}(\mathcal P\phi_{n}-\mu_{n})^{2}dx,
 \end{split}
\end{equation}
where
\begin{equation}\label{pofthm210t}
\vartheta_{n}:=2\mathfrak r(\mathcal P\phi_{n}-\mu_{n}).
\end{equation}

Below we deduce a contradiction from \eqref{pofthm2101}. To begin with, we need some estimates for $\mu_{n}$.  Recall that $\mu_n$ satisfies \eqref{mucon1}. We first show that
\begin{equation}\label{muto0}
\lim_{n\to+\infty}\mu_{n}=0.
\end{equation}
By \eqref{pofthm23}, Lemma \ref{eiyoyo}(i) and the Sobolev embedding $W^{2,p}(D)\hookrightarrow C(\bar D)$, we deduce that
\begin{equation}\label{wnbdd}
\lim_{n\to+\infty}\|\mathcal P\phi_n\|_{L^{\infty}(D)}=0.
\end{equation}
In particular,
\begin{equation}\label{tpq11}
\mbox{$\{\mathcal P\phi_{n}\}$ is bounded in $L^{\infty}(D)$.}
\end{equation}
By \eqref{tpq11} and the fact that $|g(s)|\to+\infty$ as $|s|\to+\infty$ (due to \eqref{gco33}),
we conclude from \eqref{mucon1} that   $\{\mu_{n}\}$ is a bounded sequence. Therefore, \emph{up to a subsequence},  $\{\mu_{n}\}$ has a limit, say  $\mu$.    Passing to the limit $n\to+\infty$ in  \eqref{mucon1} gives
\begin{equation}\label{mucon167}
 \int_{D}g(\bar\psi- \mu)dx=\mathsf m.
\end{equation}
Hence, by Lemma \ref{lm57}, $\mu$ must be $0$. Such argument in fact indicates that any subsequence of $\{\mu_n\}$ has a subsequence converging to $0$, which implies that $\mu_{n}\to0$ as $n\to+\infty$. Therefore \eqref{muto0} is proved.

Note that from \eqref{muto0} and \eqref{wnbdd} we  have the following estimate for $\vartheta_n$:
\begin{equation}\label{thto0}
\lim_{n\to+\infty}\|\vartheta_{n}\|_{L^{\infty}(D)}=0.
\end{equation}

Below we deduce a better estimate for $\mu_{n}$:
\begin{equation}\label{muco489}
 \mu_{n}=\frac{\int_{D}g'(\bar\psi)\mathcal P\phi_{n}dx}{\int_{D}g'(\bar\psi)dx} +\alpha_{n},
\end{equation}
where $\{\alpha_{n}\}\subset\mathbb R$ satisfying
\begin{equation}\label{muco48u}
\lim_{n\to+\infty}\frac{\alpha_{n}}{ \|\mathcal P\phi_{n}\|_{L^{2}(D)}}=0.
\end{equation}
 To this end, first observe that \eqref{mucon1} can be written as
 \begin{equation}\label{mucon109}
\int_{D}g(\bar\psi+\mathcal P\phi_{n} - \mu_{n})-g(\bar\psi)dx=0.
\end{equation}
Applying Taylor's theorem again, we have that
\begin{equation}\label{mucon110}
\begin{split}
g(\bar\psi+\mathcal P\phi_{n}- \mu_{n})-g(\bar\psi)
= g'(\bar\psi)(\mathcal P\phi_{n}-\mu_{n})+{\mathfrak r}_{1}(\mathcal P\phi_{n}-\mu_{n})(\mathcal P\phi_{n}-\mu_{n}),
\end{split}
\end{equation}
where ${\mathfrak r}_{1}\in C(\mathbb R)$ satisfying ${\mathfrak r}_{1}(s)\to0$ as $s\to0.$ Inserting  \eqref{mucon110} into \eqref{mucon109}  gives
\begin{equation}\label{mucon111}
\mu_{n}\left(\int_{D}g'(\bar\psi)dx+\int_{D}\mathfrak r_{1}(\mathcal P\phi_{n}-\mu_{n})dx\right)=\int_{D}g'(\bar\psi)\mathcal P\phi_{n}dx+\int_{D}{\mathfrak r}_{1} (\mathcal P\phi_{n}-\mu_{n}) \mathcal P\phi_{n} dx.
\end{equation}
Denote
\begin{equation}\label{mucon1121}
\beta_{n}=\int_{D}\mathfrak r_{1}(\mathcal P\phi_{n}-\mu_{n})dx, \quad\gamma_{n}=\int_{D}{\mathfrak r}_{1} (\mathcal P\phi_{n}-\mu_{n}) \mathcal P\phi_{n} dx.
\end{equation}
Then \eqref{mucon111} becomes
\begin{equation}\label{mucon191}
\mu_{n} \int_{D}g'(\bar\psi)dx+\mu_n\beta_n=\int_{D}g'(\bar\psi)\mathcal P\phi_{n}dx+\gamma_n.
\end{equation}
By \eqref{muto0} and \eqref{wnbdd},   it is easy to check that
\begin{equation}\label{mucon112}
\lim_{n\to+\infty}\beta_{n}=0.
\end{equation}
\begin{equation}\label{mucon113}
\lim_{n\to+\infty}\frac{\gamma_{n}}{\|\mathcal P\phi_{n}\|_{L^{2}(D)}}=0  \quad {\mbox{(note that $\mathcal P\phi_n\neq 0$ by \eqref{pofthm22}).}}
\end{equation}
By \eqref{mucon112} and our  assumption \eqref{gpied0}, for  sufficiently large $n$, it holds  that
\[\int_{D}g'(\bar\psi)dx+\beta_{n}>0.\]
Therefore, from \eqref{mucon191}, $\mu_n$ can be written as
\begin{equation}\label{mucon114}
\mu_{n}  =\frac{\int_{D}g'(\bar\psi)\mathcal P\phi_{n}dx+\gamma_{n}}{\int_{D}g'(\bar\psi)dx+\beta_{n}}.
\end{equation}
To prove \eqref{muco489}, we compute
\begin{equation}\label{mucon1190}
\begin{split}
\left|\mu_{n}-\frac{\int_{D}g'(\bar\psi)\mathcal P\phi_{n}dx}{\int_{D}g'(\bar\psi)dx} \right|&=\left|\frac{\int_{D}g'(\bar\psi)\mathcal P\phi_{n}dx+\gamma_{n}}{\int_{D}g'(\bar\psi)dx+\beta_{n}}-\frac{\int_{D}g'(\bar\psi)\mathcal P\phi_{n}dx}{\int_{D}g'(\bar\psi)dx} \right|\\
&=\left|\frac{ \gamma_{n} \int_{D}g'(\bar\psi)dx-\beta_{n}\int_{D}g'(\bar\psi)\mathcal P\phi_{n}dx }{\int_{D}g'(\bar\psi)dx\left(\int_{D}g'(\bar\psi)dx+\beta_{n}\right)}\right|\\
&\le C\left(\beta_{n}\|\mathcal P\phi_{n}\|_{L^{2}(D)}+\gamma_{n}\right),
\end{split}
\end{equation}
where $C>0$ does not depend on $n$.      Combining \eqref{mucon112}, \eqref{mucon113} and \eqref{mucon1190}, we get \eqref{muco489}.

  Note that \eqref{muco489} implies
\begin{equation}\label{pofthm3io}
|\mu_{n}|\leq C\|\mathcal P\phi_{n}\|_{L^2(D)}
\end{equation}
for some $C>0$ depending not on $n$. This can be easily verified by the H\"older's inequality.

Inserting \eqref{muco489} into \eqref{pofthm2101}, after some algebraic operations we have that
\begin{equation}\label{pofthm2106}
\begin{split}
 \int_{D}\phi_{n}\mathcal P\phi_{n}dx- \int_{D}g'(\bar\psi)(\mathcal P\phi_{n})^{2}dx+\frac{\left(\int_{D}g'(\bar\psi)\mathcal P\phi_{n}\right)^{2}}{\int_{D}g'(\bar\psi)dx}
 \leq  \alpha_{n}^{2}\int_{D}g'(\bar\psi)dx +\int_{D}\vartheta_{n}(\mathcal P\phi_{n}-\mu_{n})^{2}dx.
 \end{split}
\end{equation}
For the second integral on the right-hand side of \eqref{pofthm2106}, it is easy to check that
\begin{equation}\label{pofthm2io}
\left|\int_{D}\vartheta_{n}(\mathcal P\phi_{n}-\mu_{n})^{2}dx\right|\leq C\|\vartheta_{n}\|_{L^{\infty}(D)}(\|\mathcal P\phi_{n}\|^{2}_{L^{2}(D)}+\mu_{n}^{2}),
\end{equation}
where $C>0$ does not depend on $n$.
Combining \eqref{thto0}, \eqref{muco48u}, \eqref{pofthm3io} and \eqref{pofthm2io}, we deduce that
\[\lim_{n\to+\infty}\frac{\alpha_{n}^{2}\int_{D}g'(\bar\psi)dx +\int_{D}\vartheta_{n}(\mathcal P\phi_{n}-\mu_{n})^{2}dx}{\|\mathcal P\phi_{n}\|_{L^{2}(D)}^{2}}=0,\]
which  leads to an obvious contradiction to \eqref{lemc1220}.

\end{proof}

\begin{remark}
Regarding the condition \eqref{lemc1},
Wolansky and Ghil proved a general stability criterion  in \cite{WG}.  See also \cite{LZ}. However, Wolansky and Ghil's result requires  $\min_{\bar D}g'(\bar\psi)>0$, hence can not be applied directly in the present paper.

\end{remark}

\begin{remark}
To ensure that Theorem \ref{thm2} holds, the condition \eqref{min0w} is indispensable. In fact, if $\bar \omega\in L^p(D)$ is a local maximizer of $E(\cdot,\mathbf a)$ relative to $\mathcal R_{\bar \omega},$ then there exists some increasing function $g$ such that
$\bar \omega= g(\mathcal P\bar\omega+{h_\mathbf a})$ almost everywhere in $D$. 
This can be proved by combining Theorem 3.3(iii) in \cite{B2} and Lemma 2.6 in \cite{BM}.

\end{remark}

\section{Compactness}\label{compac}

The purpose of this section is to prove a compactness result related to the variational characterization in Section \ref{varcha}, which is also an essential step in the proof of nonlinear stability.

Throughout this section, let $1<p<+\infty$ be fixed. For $w\in L^{p}(D)$ and  $r>0$, denote by $\mathbf B_{r}(w)$ the ball in $L^p(D)$ centered at $w$ with radius $r$, i.e.,
 \[\mathbf B_{r}(w):=\{v\in L^{p}(D)\mid \|v-w\|_{L^{p}(D)}<r\}.\]

The main result in this section can be stated as follows.
\begin{theorem}\label{prop41}
Let $\bar\omega$ and $\mathbf a$ be as in Section \ref{varcha}.   Then there exists some $\hat r>0,$  such that for any sequence
  $\{w_{n}\}\subset  \bar{\mathcal R}_{\bar\omega}\cap \mathbf B_{\hat r}{(\bar\omega)}$ satisfying
\[ \lim_{n\to+\infty} E(w_n,\mathbf a)=  E(\bar\omega,\mathbf a),\]
it holds that
$w_{n}\to \bar\omega$ in  $L^{p}(D)$ as $n\to+\infty$. Here and henceforth, $\bar{\mathcal R}_{\bar\omega}$ denotes the weak closure of $\mathcal R_{\bar\omega}$ in $L^p(D).$

\end{theorem}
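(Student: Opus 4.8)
The plan is to argue directly, extracting a weak limit and then pinning it down with the supporting functional, rather than by contradiction. First I would take the given sequence $\{w_n\}\subset\bar{\mathcal R}_{\bar\omega}\cap\mathbf B_{\hat r}(\bar\omega)$. Since every element of $\bar{\mathcal R}_{\bar\omega}$ has $L^p$-norm at most $\|\bar\omega\|_{L^p(D)}$, the sequence is bounded, so after passing to a subsequence $w_n\rightharpoonup\hat w$ in $L^p(D)$; as $\bar{\mathcal R}_{\bar\omega}$ is weakly closed, $\hat w\in\bar{\mathcal R}_{\bar\omega}$, and weak lower semicontinuity of the norm gives $\|\hat w-\bar\omega\|_{L^p(D)}\le\hat r$. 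The weak sequential continuity of $E(\cdot,\mathbf a)$ (Lemma \ref{lipcoe}) then yields $E(\hat w,\mathbf a)=\lim_n E(w_n,\mathbf a)=E(\bar\omega,\mathbf a)$. Because $w\mapsto\int_D w\,dx$ is weakly continuous and constant on $\mathcal R_{\bar\omega}$, we also record $\int_D\hat w\,dx=\mathsf m$.

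The heart of the proof is to show $\hat w=\bar\omega$. Here I would exploit that, although the Casimir $\int_D\hat G(w)\,dx$ is no longer constant on $\bar{\mathcal R}_{\bar\omega}$, the supporting functional $\hat{\mathcal D}$ still controls the energy on the whole affine slice $\{w:\int_D w\,dx=\mathsf m\}$. Since $\int_D\hat w\,dx=\mathsf m$, Lemma \ref{lm58}(i) gives $E(\hat w,\mathbf a)=EC(\hat w)+\int_D\hat G(\hat w)\,dx\le\hat{\mathcal D}(\hat w)+\int_D\hat G(\hat w)\,dx$; moreover $\hat G$ is convex (Lemma \ref{lt}), so $w\mapsto\int_D\hat G(w)\,dx$ is weakly lower semicontinuous and hence $\int_D\hat G(\hat w)\,dx\le\int_D\hat G(\bar\omega)\,dx$, the constant value taken on $\mathcal R_{\bar\omega}$. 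Combining these with $E(\bar\omega,\mathbf a)=\hat{\mathcal D}(\bar\omega)+\int_D\hat G(\bar\omega)\,dx$ (Lemma \ref{lm58}(ii)) and $E(\hat w,\mathbf a)=E(\bar\omega,\mathbf a)$ forces $\hat{\mathcal D}(\hat w)\ge\hat{\mathcal D}(\bar\omega)$. Running the estimates from the proof of Theorem \ref{thm2} on the single increment $\phi=\hat w-\bar\omega$, which has small $L^p$-norm and mean zero, the weak positive definiteness of Proposition \ref{keyprop}, together with a choice of $\hat r$ small enough to absorb the remainder terms (which are $o(\|\mathcal P\phi\|_{L^2(D)}^2)$ uniformly as $\|\phi\|_{L^p(D)}\to0$), forces $\mathcal P\phi=0$; positive definiteness, hence injectivity, of $\mathcal P$ then gives $\phi=0$, i.e. $\hat w=\bar\omega$.

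It remains to upgrade weak convergence to strong convergence, and this is where the rearrangement structure re-enters cleanly. Every $w\in\bar{\mathcal R}_{\bar\omega}$ satisfies $\|w\|_{L^p(D)}\le\|\bar\omega\|_{L^p(D)}$, while $w_n\rightharpoonup\bar\omega$ forces $\|\bar\omega\|_{L^p(D)}\le\liminf_n\|w_n\|_{L^p(D)}$; hence $\|w_n\|_{L^p(D)}\to\|\bar\omega\|_{L^p(D)}$. Since $L^p(D)$ is uniformly convex for $1<p<+\infty$, weak convergence together with convergence of the norms gives $w_n\to\bar\omega$ strongly in $L^p(D)$. Finally, a routine subsequence argument, in which every subsequence has a further subsequence converging strongly to the same limit $\bar\omega$, promotes this to convergence of the full original sequence, which is the assertion of the theorem.

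The step I expect to be the main obstacle is precisely the transfer of isolated maximality from $\mathcal R_{\bar\omega}$, where Theorem \ref{thm2} lives and the Casimir is constant, to the larger set $\bar{\mathcal R}_{\bar\omega}$, on which the Casimir only drops under weak limits. The supporting functional $\hat{\mathcal D}$ is the device that bridges this gap, precisely because the inequality $EC\le\hat{\mathcal D}$ is valid on the entire mean-$\mathsf m$ slice and not merely on rearrangements; this is what makes the positive definiteness of $\mathcal P$ (strict convexity and injectivity of the quadratic part of the energy) and the convexity of $\hat G$ indispensable. An alternative would be to replace $\hat w$ by the rearrangement of $\bar\omega$ maximizing the interaction $\int_D v\,(\mathcal P\hat w+h_{\mathbf a})\,dx$ furnished by Lemma \ref{lem202}, then invoke convexity of $E$ together with the convexity of $\bar{\mathcal R}_{\bar\omega}$ from Lemma \ref{lem201} and apply Theorem \ref{thm2} directly; that route, however, must control the location of the replacement rearrangement inside $\mathbf B_{\hat r}(\bar\omega)$, which is delicate when $\bar\psi$ possesses level sets of positive measure, and I would therefore favour the supporting-functional argument above.
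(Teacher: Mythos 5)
Your proposal is correct in substance, but it takes a genuinely different route from the paper at the one step where all the work lies. The two proofs share the same skeleton: extract a weak limit $\hat w$ of a subsequence, place it in $\bar{\mathcal R}_{\bar\omega}\cap\mathbf B_{\hat r}(\bar\omega)$ by weak lower semicontinuity, use weak continuity of $E(\cdot,\mathbf a)$ (Lemma \ref{lipcoe}) to get $E(\hat w,\mathbf a)=E(\bar\omega,\mathbf a)$, and finish with the norm-convergence/uniform-convexity upgrade and a subsequence argument. The difference is in proving $\hat w=\bar\omega$. The paper (Lemmas \ref{lem4030}--\ref{lem405}) treats Theorem \ref{thm2} as a black box: it transfers the inequality $E(\cdot,\mathbf a)\le E(\bar\omega,\mathbf a)$ from $\mathcal R_{\bar\omega}\cap\mathbf B_{r_0}(\bar\omega)$ to $\bar{\mathcal R}_{\bar\omega}\cap\mathbf B_{r_1}(\bar\omega)$ by weak continuity, then shows that energy equality forces $\hat w\in\mathcal R_{\bar\omega}$ via a first-variation inequality combined with convexity of $\bar{\mathcal R}_{\bar\omega}$ (Lemma \ref{lem201}), the maximizing rearrangement of Lemma \ref{lem202}, and positive definiteness of $\mathcal P$, and only then invokes Theorem \ref{thm2}. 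You instead re-open the Section \ref{varcha} machinery: you observe that the inequality $EC\le\hat{\mathcal D}$ of Lemma \ref{lm58}(i) uses nothing but $\int_D w\,dx=\mathsf m$, hence survives on the weak closure, that the Casimir $\int_D\hat G(w)\,dx$ can only decrease under weak limits (convexity of $\hat G$), and therefore $\hat{\mathcal D}(\hat w)\ge\hat{\mathcal D}(\bar\omega)$; then you rerun the Taylor estimates of Theorem \ref{thm2} on $\phi=\hat w-\bar\omega$ and conclude from Proposition \ref{keyprop}. The paper's route buys softness and modularity (no quantitative estimates at all); your route buys the structural insight that the strict local maximality of Theorem \ref{thm2} extends to the whole mean-$\mathsf m$ slice containing $\bar{\mathcal R}_{\bar\omega}$, making Lemmas \ref{lem201} and \ref{lem202} unnecessary for this step. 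Incidentally, your stated objection to the Burton-type alternative is unfounded: the paper's Lemma \ref{lem405} never needs to locate the replacement rearrangement $\tilde w$ inside the ball, because it works with convex combinations $\theta\tilde w+(1-\theta)w$ for small $\theta$ and lets strict convexity of the quadratic part force $w=\tilde w$.

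Two points in your plan need repair before it is rigorous. First, Proposition \ref{keyprop} assumes \eqref{gpied0}; the degenerate case $\int_D g'(\bar\psi)\,dx=0$ must be disposed of separately (there $\bar\omega$ is constant, so $\mathcal R_{\bar\omega}=\bar{\mathcal R}_{\bar\omega}=\{\bar\omega\}$ and the theorem is vacuous). Second, and more seriously, your claim that the remainders are $o\bigl(\|\mathcal P\phi\|_{L^2(D)}^2\bigr)$ \emph{uniformly} as $\|\phi\|_{L^p(D)}\to 0$ does not follow by merely ``running the estimates'' of Theorem \ref{thm2}: in that proof the smallness of $\mu_n$ (hence of $\vartheta_n$ and $\alpha_n$) is obtained by a soft compactness argument (bounded subsequences plus Lemma \ref{lm57}), which is not quantitative. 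You can fix this in either of two ways: (a) derive the pointwise bound $|\mu|\le\|\mathcal P\phi\|_{L^\infty(D)}\le C\|\phi\|_{L^p(D)}$ directly from the constraint \eqref{mude2} and the monotonicity of $g$ (strict outside $[\bar m,\bar M]$), after which all remainders are controlled by moduli of continuity of $\mathfrak r,\mathfrak r_1$ on compact sets and your uniformity claim holds; or (b) recast the uniqueness claim sequentially: if no radius works, choose $\hat w_k\in\bar{\mathcal R}_{\bar\omega}$, $\hat w_k\neq\bar\omega$, $\|\hat w_k-\bar\omega\|_{L^p(D)}\le 1/k$, with $E(\hat w_k,\mathbf a)=E(\bar\omega,\mathbf a)$, derive $\hat{\mathcal D}(\hat w_k)\ge\hat{\mathcal D}(\bar\omega)$ exactly as you did, and then the contradiction argument of Theorem \ref{thm2} applies verbatim to $\phi_k=\hat w_k-\bar\omega$, since that argument uses membership in $\mathcal R_{\bar\omega}$ only through the mean constraint and Lemma \ref{lm56}, whose proof extends unchanged to arbitrary elements of $L^p(D)$. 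With either repair your argument is complete.
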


The proof is very similar to that of Theorem 5 in Burton \cite{B5}. It relies on the strict convexity of $E(\cdot,\mathbf a)$ and the properties of rearrangements.
For completeness, we prove Theorem \ref{prop41} in detail below.

We begin with several lemmas.
\begin{lemma}\label{lem4030}
The strong and weak topologies of $L^{p}(D)$ restricted on $\mathcal R_{\bar\omega}$ are the same.
\end{lemma}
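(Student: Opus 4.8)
The plan is to exploit two facts: every function in $\mathcal R_{\bar\omega}$ has one and the same $L^p$ norm, and $L^p(D)$ is uniformly convex for $1<p<+\infty$. Since the strong topology of $L^p(D)$ is always finer than the weak one, the weak-restricted topology is automatically coarser than the strong-restricted topology, and the only nontrivial task is the reverse inclusion. Equivalently, I would show that the identity map carrying $\mathcal R_{\bar\omega}$ with its (relative) weak topology to $\mathcal R_{\bar\omega}$ with its (relative) strong topology is continuous; that is, every strongly open subset of $\mathcal R_{\bar\omega}$ is also weakly open in $\mathcal R_{\bar\omega}$.

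First I would record that the norm is constant on the rearrangement class. If $w\in\mathcal R_{\bar\omega}$, then $w$ and $\bar\omega$ are equimeasurable by the definition \eqref{rwdef}, so $\int_D|w|^p\,dx=\int_D|\bar\omega|^p\,dx$ and hence $\|w\|_{L^p(D)}=\|\bar\omega\|_{L^p(D)}$. In particular $\mathcal R_{\bar\omega}$ is a bounded subset of $L^p(D)$.

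Next I would reduce the topological statement to a statement about sequences. Since $D$ is bounded, the dual space $L^{p^*}(D)$, with $p^*=p/(p-1)$, is separable; consequently the weak topology of $L^p(D)$ restricted to any norm-bounded set, and in particular to $\mathcal R_{\bar\omega}$, is metrizable. It therefore suffices to verify sequential continuity of the identity map: whenever $w_n\in\mathcal R_{\bar\omega}$ and $w_n\rightharpoonup w$ in $L^p(D)$ with $w\in\mathcal R_{\bar\omega}$, one must have $w_n\to w$ strongly in $L^p(D)$.

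Finally, this sequential claim is exactly the Radon--Riesz property. By the previous paragraph $\|w_n\|_{L^p(D)}=\|\bar\omega\|_{L^p(D)}=\|w\|_{L^p(D)}$ for every $n$, so we have weak convergence together with convergence of the norms; since $L^p(D)$ is uniformly convex for $1<p<+\infty$ (Clarkson's inequalities), weak convergence plus norm convergence forces strong convergence, giving $w_n\to w$ in $L^p(D)$ and completing the identification of the two relative topologies. The one step deserving care is the passage from the purely topological equality to a sequential statement, which is where the metrizability of the weak topology on the bounded set $\mathcal R_{\bar\omega}$ is essential; once sequences suffice, the uniform convexity of $L^p$ finishes the argument with no genuine difficulty.
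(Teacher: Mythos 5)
Your proof is correct and follows essentially the same route as the paper's: both reduce the topological statement to a sequential one via metrizability of the weak topology on the norm-bounded set $\mathcal R_{\bar\omega}$, and then conclude by the constancy of the $L^p$ norm on the rearrangement class together with uniform convexity (the Radon--Riesz property). Your version is slightly more self-contained, since you justify the metrizability via separability of $L^{p^*}(D)$ rather than citing it, but the argument is the same.
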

\begin{proof}
We only need to show that any strongly closed set $F\subset\mathcal R_{\bar\omega}$ is also weakly closed. Since the weak topology on $\mathcal R_{\bar\omega}$ is metrizable (see  for example \cite{Bre}, p. 74), it is enough to show that $F$ is sequentially closed on $\mathcal R_{\bar\omega}$. Let $\{w_{n}\}\subset F$ be a sequence such that $w_{n}$ converges to some $w\in\mathcal R_{\bar\omega}$ weakly  in $L^{p}(D)$ as $n\to+\infty$. It is clear that $\|w\|_{L^{p}(D)}=\|w_{n}\|_{L^{p}(D)}$ for every $n$. By uniform convexity, we have that $w_{n}$ converges to $w$ strongly in $L^{p}(D)$ as $n\to+\infty$. Therefore $w\in F$ since $F$ is strongly closed.
\end{proof}

\begin{lemma}\label{lem403}
There exists some weakly open set $U$ of $L^{p}(D)$ such that $ \mathcal R_{\bar\omega}\cap U=\mathcal R_{\bar\omega}\cap \mathbf B_{r_{0}}(\bar\omega),$ where $r_0$ is as in Theorem \ref{thm2}.
\end{lemma}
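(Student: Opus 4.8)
The plan is to deduce the statement directly from Lemma \ref{lem4030}, which asserts that the strong and weak topologies of $L^p(D)$ induce the \emph{same} subspace topology on $\mathcal R_{\bar\omega}$. The key observation is that the conclusion we seek is merely a reformulation of the assertion ``$\mathcal R_{\bar\omega}\cap \mathbf B_{r_0}(\bar\omega)$ is open in the weak subspace topology on $\mathcal R_{\bar\omega}$.'' Once we know the two subspace topologies coincide, the result becomes immediate from the very definition of the subspace topology, so no further analytic work is needed.

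First I would note that $\mathbf B_{r_0}(\bar\omega)$ is, by definition, open in the strong topology of $L^p(D)$. Consequently the set $\mathcal R_{\bar\omega}\cap \mathbf B_{r_0}(\bar\omega)$ is open in the relative (strong) topology that $\mathcal R_{\bar\omega}$ inherits from the strong topology of $L^p(D)$. Next, invoking Lemma \ref{lem4030}, this relative strong topology and the relative weak topology on $\mathcal R_{\bar\omega}$ coincide; hence $\mathcal R_{\bar\omega}\cap \mathbf B_{r_0}(\bar\omega)$ is also open in the relative topology that $\mathcal R_{\bar\omega}$ inherits from the weak topology of $L^p(D)$.

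Finally, by the definition of the subspace topology, a subset of $\mathcal R_{\bar\omega}$ is open in this relative weak topology precisely when it can be written as $\mathcal R_{\bar\omega}\cap U$ for some weakly open set $U\subset L^p(D)$. Applying this characterization to $\mathcal R_{\bar\omega}\cap \mathbf B_{r_0}(\bar\omega)$ produces the desired weakly open set $U$ with $\mathcal R_{\bar\omega}\cap U=\mathcal R_{\bar\omega}\cap \mathbf B_{r_0}(\bar\omega)$, which is exactly the claim.

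Since all the analytic substance (uniform convexity of $L^p$ and the coincidence of norms across a rearrangement class) has already been packaged into Lemma \ref{lem4030}, there is essentially no remaining obstacle; the argument is a purely formal unwinding of the subspace-topology definition. The only point requiring a moment's care is to keep the logical direction straight: the equality of the two relative topologies is used to pass from a strongly-relatively-open set to a weakly-relatively-open one, and it is the latter property that guarantees the existence of the ambient weakly open $U$.
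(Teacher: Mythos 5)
Your argument is correct and is exactly the intended one: the paper's proof of this lemma is the single sentence ``It is an easy consequence of Lemma \ref{lem4030},'' and your proposal is precisely the formal unwinding of that sentence (strong openness of the ball, coincidence of the relative topologies from Lemma \ref{lem4030}, and the definition of the subspace topology). No gaps; nothing further is needed.
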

\begin{proof}
It is an easy consequence of Lemma \ref{lem4030}.
\end{proof}

\begin{lemma}\label{lem404}
There exists some $0<r_{1}<r_{0},$ such that
\[E(\bar\omega, {\mathbf a})\geq E(w, {\mathbf a}),\quad\forall\,w\in \bar{\mathcal R}_{\bar\omega}\cap \mathbf B_{r_{1}}(\bar\omega). \]
\end{lemma}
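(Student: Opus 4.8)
The plan is to argue by contradiction, exploiting the fact that $E(\cdot,\mathbf a)$ is convex in its first argument because its quadratic part $\tfrac12\int_D\omega\mathcal P\omega\,dx$ is governed by the positive (semi-)definite operator $\mathcal P$. Suppose the conclusion fails; then for each $n$ there is some $w_n\in\bar{\mathcal R}_{\bar\omega}$ with $\|w_n-\bar\omega\|_{L^p(D)}\to 0$ and $E(w_n,\mathbf a)>E(\bar\omega,\mathbf a)$. Writing $\psi_n=\mathcal P w_n+h_{\mathbf a}$ and using the symmetry and positivity of $\mathcal P$, I would record the exact identity
\[E(v,\mathbf a)-E(w_n,\mathbf a)=\int_D(v-w_n)\psi_n\,dx+\frac12\int_D(v-w_n)\mathcal P(v-w_n)\,dx\geq\int_D(v-w_n)\psi_n\,dx\]
valid for every $v\in L^p(D)$, which reduces matters to producing a genuine rearrangement of $\bar\omega$, close to $\bar\omega$, on which $E$ exceeds $E(\bar\omega,\mathbf a)$.

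To produce such a rearrangement, I would apply Lemma \ref{lem202} with $v_0=\bar\omega$ and $w_0=\psi_n$ to obtain $\tilde v_n\in\mathcal R_{\bar\omega}$ maximizing the pairing $v\mapsto\int_D v\psi_n\,dx$ over $\mathcal R_{\bar\omega}$. Since this linear functional is weakly continuous and $\bar{\mathcal R}_{\bar\omega}$ is the weak closure of $\mathcal R_{\bar\omega}$, its maximum over $\bar{\mathcal R}_{\bar\omega}$ is attained at $\tilde v_n$ as well; as $w_n\in\bar{\mathcal R}_{\bar\omega}$, this yields $\int_D\tilde v_n\psi_n\,dx\geq\int_D w_n\psi_n\,dx$. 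Inserting $v=\tilde v_n$ into the displayed identity gives
\[E(\tilde v_n,\mathbf a)\geq E(w_n,\mathbf a)>E(\bar\omega,\mathbf a),\]
so in particular $\tilde v_n\neq\bar\omega$. It then remains to show $\tilde v_n\to\bar\omega$ strongly in $L^p(D)$, for then $\tilde v_n$ eventually lies in $\mathcal R_{\bar\omega}\cap\mathbf B_{r_0}(\bar\omega)$ with $\tilde v_n\neq\bar\omega$, contradicting the isolated-local-maximizer property from Theorem \ref{thm2}.

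For the convergence I would note that $w_n\to\bar\omega$ in $L^p(D)$ forces $\psi_n\to\bar\psi$ in $C(\bar D)$ via the mapping property $\mathcal P\colon L^p(D)\to W^{2,p}(D)$ and the Sobolev embedding $W^{2,p}(D)\hookrightarrow C(\bar D)$. The $\tilde v_n$ all share the $L^p$ norm of $\bar\omega$, so a subsequence converges weakly to some $v^\ast\in\bar{\mathcal R}_{\bar\omega}$; passing to the limit in the pairing (using the uniform convergence $\psi_n\to\bar\psi$ and the continuity of the maximal pairing value as a function of $\psi$) shows that $v^\ast$ maximizes $v\mapsto\int_D v\bar\psi\,dx$ over $\bar{\mathcal R}_{\bar\omega}$. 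Because $\bar\omega=g(\bar\psi)$ with $g$ increasing (see \eqref{glav5}), $\bar\omega$ is itself such a maximizer, and I expect the main obstacle to be showing that it is the \emph{unique} one, so that $v^\ast=\bar\omega$; this rests on the fact that a rearrangement maximizing the pairing with $\bar\psi$ must coincide with $\bar\omega$ on every level set of $\bar\psi$, which pins it down even where $g$ fails to be strictly monotone. Once $v^\ast=\bar\omega$ is established, weak convergence together with equality of the $L^p$ norms yields $\tilde v_n\to\bar\omega$ strongly by the uniform convexity of $L^p(D)$; then for large $n$ we have $\tilde v_n\in\mathcal R_{\bar\omega}$ with $0<\|\tilde v_n-\bar\omega\|_{L^p(D)}<r_0$ and $E(\tilde v_n,\mathbf a)>E(\bar\omega,\mathbf a)$, contradicting Theorem \ref{thm2} and thereby proving the existence of the desired $r_1$.
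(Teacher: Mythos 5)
Your reduction is correct as far as it goes: the exact identity $E(v,\mathbf a)-E(w_n,\mathbf a)=\int_D(v-w_n)\psi_n\,dx+\tfrac12\int_D(v-w_n)\mathcal P(v-w_n)\,dx$ checks out, the application of Lemma \ref{lem202} produces a genuine rearrangement $\tilde v_n\in\mathcal R_{\bar\omega}$ with $E(\tilde v_n,\mathbf a)\geq E(w_n,\mathbf a)>E(\bar\omega,\mathbf a)$, and the passage of the maximizing property to a weak limit point $v^\ast$ is legitimate. The genuine gap is the identification $v^\ast=\bar\omega$, and it is twofold. First, $v^\ast$ is only known to lie in $\bar{\mathcal R}_{\bar\omega}$, which by Lemma \ref{lem201} is a large convex set containing, in particular, all convex combinations of rearrangements; your uniqueness sketch (``a rearrangement maximizing the pairing with $\bar\psi$ must coincide with $\bar\omega$ on every level set of $\bar\psi$'') speaks only of maximizers that \emph{are} rearrangements, so it cannot rule out non-rearrangement maximizers in the weak closure. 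Closing this would require extra input not invoked anywhere in your argument, e.g.\ Burton's theorem that the extreme points of $\bar{\mathcal R}_{\bar\omega}$ are exactly $\mathcal R_{\bar\omega}$ combined with a Krein--Milman/face argument, or a majorization argument. Second, even within $\mathcal R_{\bar\omega}$ the level-set claim is asserted rather than proved: it amounts to uniqueness of the co-monotone coupling of the distributions of $\bar\omega$ and $\bar\psi$, together with the observation that $\bar\omega=g(\bar\psi)$ forces the conditional distribution of any maximizer on each positive-measure level set $\{\bar\psi=s\}$ to be the point mass at $g(s)$; this is true but needs a real argument precisely where $g$ is flat, which is the whole point of the weakened hypothesis \eqref{min0w}. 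As written, the proof does not close.

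It is worth noting that the paper's own proof is much softer and sidesteps all of this. Since every element of $\mathcal R_{\bar\omega}$ has the same $L^p$ norm, uniform convexity shows that the weak and strong topologies of $L^p(D)$ coincide on $\mathcal R_{\bar\omega}$ (Lemma \ref{lem4030}), whence there is a \emph{weakly} open set $U$ with $\mathcal R_{\bar\omega}\cap U=\mathcal R_{\bar\omega}\cap\mathbf B_{r_0}(\bar\omega)$ (Lemma \ref{lem403}). Choosing $r_1<r_0$ with $\mathbf B_{r_1}(\bar\omega)\subset U$, any $w\in\bar{\mathcal R}_{\bar\omega}\cap\mathbf B_{r_1}(\bar\omega)$ is the weak limit of a sequence $w_n\in\mathcal R_{\bar\omega}$; these $w_n$ eventually lie in $U$, hence in $\mathcal R_{\bar\omega}\cap\mathbf B_{r_0}(\bar\omega)$, so Theorem \ref{thm2} gives $E(w_n,\mathbf a)\leq E(\bar\omega,\mathbf a)$, and the weak continuity of $E(\cdot,\mathbf a)$ passes this inequality to the limit --- no contradiction argument, no maximizer selection, no uniqueness. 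The machinery you deploy here (Lemma \ref{lem202} plus the convexity identity for $E$) is exactly what the paper reserves for the subsequent Lemma \ref{lem405}, where the strict positive-definiteness of $\mathcal P$ does the work that your missing uniqueness claim would have to do; I would suggest either adopting the topological route for this lemma, or supplying the extreme-point argument in full if you wish to keep yours.
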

\begin{proof}
Let $U$ be as in Lemma \ref{lem403}.
Clearly $U$ is a strongly open set of $L^{p}(D)$ containing $\bar\omega$, which implies
the existence of some $r_{1}>0$ such that $\mathbf B_{r_{1}}(\bar\omega)\subset U.$ Without loss of generality,  we assume that $r_{1}<r_{0}.$

For any $w\in  \bar{\mathcal R}_{\bar\omega}\cap \mathbf B_{r_{1}}(\bar\omega)$, take a sequence $\{w_{n}\}\subset \mathcal R_{\bar\omega}$ such that $w_{n}\rightharpoonup w$ in $L^{p}(D).$  Since $\mathbf B_{r_{1}}(\bar\omega)\subset U$ and $U$ is weakly open on $\mathcal R_{\bar\omega},$ we have that $w_{n}\in U$ if $n$ is sufficiently large. Hence $w_{n}\in \mathcal R_{\bar\omega}\cap \mathbf B_{r_{0}}(\bar\omega)$ if $n$ is sufficiently large. Then the conclusion of Theorem \ref{thm2} gives
\begin{equation}\label{40331}
E(w_{n}, {\mathbf a})\leq E(\bar\omega, {\mathbf a}),
\end{equation}
provided that $n$ is  large enough. By the weak continuity of $E(\cdot,\mathbf a)$ (see Lemma \ref{lipcoe}), we can pass  to the limit $n\to+\infty$ in \eqref{40331} to obtain
\[E (w,{\mathbf a})\leq E (\bar\omega,{\mathbf a}).\]
\end{proof}
\begin{lemma}\label{lem405}
Let $r_{1}$ be as in Lemma \ref{lem404}.  Then for any $w\in \bar{\mathcal R}_{\bar\omega}\cap \mathbf B_{r_{1}}(\bar\omega)$ satisfying
$ E(w, {\mathbf a})= E (\bar\omega,{\mathbf a}),$
it holds that
$w=\bar\omega.$
\end{lemma}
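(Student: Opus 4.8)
The plan is to argue by contradiction: assume $w\neq\bar\omega$ and derive a contradiction by exploiting the \emph{strict} convexity of $E(\cdot,\mathbf a)$ together with the fact that $w$ must then be an extreme point of $\bar{\mathcal R}_{\bar\omega}$, which by Burton's theory of rearrangements forces $w\in\mathcal R_{\bar\omega}$. First I would record that $E(\cdot,\mathbf a)$ is strictly convex on $L^p(D)$: writing $E(v,\mathbf a)=\tfrac12\int_D v\mathcal Pv\,dx+\int_D h_{\mathbf a}v\,dx+\tfrac12\sum_{i,j}q_{ij}a_ia_j$, the quadratic part is governed by $\int_D\phi\mathcal P\phi\,dx$, which is positive for $\phi\neq 0$ by the positive definiteness of $\mathcal P$ (recall $-\Delta\mathcal P\phi=\phi$). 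Hence for $u\neq v$ and $t\in(0,1)$ one has the strict inequality $E(tu+(1-t)v,\mathbf a)<tE(u,\mathbf a)+(1-t)E(v,\mathbf a)$.

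Next I would show that $w$ is an extreme point of the convex set $\bar{\mathcal R}_{\bar\omega}$, convexity being Lemma \ref{lem201}. Suppose not; then $w$ lies in the relative interior of a nondegenerate segment contained in $\bar{\mathcal R}_{\bar\omega}$, and by shrinking this segment toward $w$ I can produce $u',v'\in\bar{\mathcal R}_{\bar\omega}$ with $u'\neq v'$, $w=\tfrac12(u'+v')$, and $\|u'-w\|_{L^p(D)}=\|v'-w\|_{L^p(D)}$ as small as I please. Since $\|w-\bar\omega\|_{L^p(D)}<r_1$, I may choose the shrinking parameter small enough that $u',v'\in\mathbf B_{r_1}(\bar\omega)$, so that Lemma \ref{lem404} applies and gives $E(u',\mathbf a),E(v',\mathbf a)\le E(\bar\omega,\mathbf a)$. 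Strict convexity then yields $E(w,\mathbf a)<\tfrac12\bigl(E(u',\mathbf a)+E(v',\mathbf a)\bigr)\le E(\bar\omega,\mathbf a)$, contradicting the hypothesis $E(w,\mathbf a)=E(\bar\omega,\mathbf a)$. Therefore $w$ is an extreme point of $\bar{\mathcal R}_{\bar\omega}$.

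To conclude, I would invoke the characterization of Burton \cite{B1} that the extreme points of the weak closure $\bar{\mathcal R}_{\bar\omega}$ are precisely the elements of $\mathcal R_{\bar\omega}$, whence $w\in\mathcal R_{\bar\omega}$. As $w\in\mathbf B_{r_1}(\bar\omega)\subset\mathbf B_{r_0}(\bar\omega)$ and $w\neq\bar\omega$, we have $0<\|w-\bar\omega\|_{L^p(D)}<r_0$, so the strict inequality of Theorem \ref{thm2} gives $E(w,\mathbf a)<E(\bar\omega,\mathbf a)$, once more contradicting $E(w,\mathbf a)=E(\bar\omega,\mathbf a)$. Hence $w=\bar\omega$.

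The main obstacle is that abstract convexity alone is genuinely insufficient here: a strictly convex functional can attain its maximum over a convex set at two distinct points, so no contradiction follows from strict convexity and two local maximizers in isolation. What makes the argument close is the combination of (a) the extreme-point characterization, which lifts $w$ from the weak closure back into the true class $\mathcal R_{\bar\omega}$, and (b) the strict local optimality of $\bar\omega$ \emph{on} $\mathcal R_{\bar\omega}$ recorded in Theorem \ref{thm2}. The one point requiring care is the locality bookkeeping in the shrinking step, ensuring the decomposing points $u',v'$ remain inside $\mathbf B_{r_1}(\bar\omega)$ where Lemma \ref{lem404} is valid.
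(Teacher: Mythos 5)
Your proof is correct, but it takes a genuinely different route from the paper's. The paper never invokes an extreme-point theorem: starting from the first-variation inequality $\frac{d}{d\theta}E(\theta v+(1-\theta)w,\mathbf a)\big|_{\theta=0^+}\leq 0$ (valid for all $v\in\bar{\mathcal R}_{\bar\omega}$ by Lemma \ref{lem404} and the convexity of $\bar{\mathcal R}_{\bar\omega}$ from Lemma \ref{lem201}), it uses Lemma \ref{lem202} to produce a genuine rearrangement $\tilde w\in\mathcal R_{\bar\omega}$ maximizing the linear functional $v\mapsto\int_D(\mathcal Pw+h_{\mathbf a})v\,dx$ over $\bar{\mathcal R}_{\bar\omega}$, and then expands $E(\theta\tilde w+(1-\theta)w,\mathbf a)-E(w,\mathbf a)$ exactly; the quadratic term $\int_D(w_\theta-w)\mathcal P(w_\theta-w)\,dx$ must vanish, and positive definiteness of $\mathcal P$ forces $w=\tilde w\in\mathcal R_{\bar\omega}$, after which Theorem \ref{thm2} finishes as in your last step. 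You instead package the same positive definiteness of $\mathcal P$ (Lemma \ref{eiyoyo}(iii)) as strict convexity of $E(\cdot,\mathbf a)$, show by your correctly handled shrinking argument that $w$ must be an extreme point of $\bar{\mathcal R}_{\bar\omega}$ (the locality bookkeeping with $u',v'\in\mathbf B_{r_1}(\bar\omega)$ is exactly right, since $\|w-\bar\omega\|_{L^p(D)}<r_1$ strictly), and then lift $w$ into $\mathcal R_{\bar\omega}$ via the characterization of the extreme points of the weak closure of a rearrangement class as precisely the rearrangements. That characterization is a true classical result (due to Ryff for $L^1(0,1)$, with $L^p$ versions in Burton's work), but be aware it is a strictly stronger structural tool than anything the paper imports: the paper quotes only Theorems 4 and 6 of \cite{B1} (Lemmas \ref{lem202} and \ref{lem201}), and its proof is self-contained given those, whereas your citation of \cite{B1} for the extreme-point theorem should be checked against the actual statement there or replaced by a reference to Ryff. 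In exchange, your argument is shorter and conceptually cleaner — a point of a convex set at which a strictly convex functional attains its (local) maximum value must be locally extreme — and it isolates where the hypothesis $E(w,\mathbf a)=E(\bar\omega,\mathbf a)$ is used, while the paper's route trades this abstraction for an explicit duality-type argument with the maximizing rearrangement $\tilde w$ that stays entirely within its declared toolkit.
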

\begin{proof}
Fix $w\in \bar{\mathcal R}_{\bar\omega}\cap \mathbf B_{r_{1}}(\bar\omega)$ such that
$ E(w, {\mathbf a})= E (\bar\omega,{\mathbf a}).$
By Lemma \ref{lem201}, for any $v\in \bar{\mathcal R}_{\bar\omega}$ and $\theta\in [0,1]$, we have
\[\theta v+(1-\theta)w\in\bar{\mathcal R}_{\bar\omega}.\]
On the other hand, it is also clear that $\theta v+(1-\theta)w\in \mathbf B_{r_{1}}(\bar\omega)$ if $\theta$ is small enough.
Applying Lemma \ref{lem404} gives
\[E (\theta v+(1-\theta)w,{\mathbf a})\leq E (\bar\omega,{\mathbf a})=E (w,{\mathbf a}).\]
Hence
\begin{equation}\label{40332}
\frac{d}{d\theta}E(\theta v+(1-\theta)w, {\mathbf a})\bigg|_{\theta=0^{+}}\leq 0.
\end{equation}
By a straightforward computation, we get from \eqref{40332} that
\begin{equation}\label{40333}
\int_{D}(\mathcal Pw+h_{\mathbf a})vdx\leq \int_{D}(\mathcal Pw+h_{\mathbf a})wdx.
\end{equation}
Note that \eqref{40333} holds for any $v\in\bar{\mathcal R}_{\bar\omega}$.

By Lemma \ref{lem202}, there exists some $\tilde w\in\mathcal R_{\bar\omega}$ such that
\begin{equation}\label{40334}
\int_{D}(\mathcal Pw+h_{\mathbf a})vdx\leq \int_{D}(\mathcal Pw+h_{\mathbf a})\tilde wdx,\quad \forall\,v\in\bar{\mathcal R}_{\bar\omega}.
\end{equation}
In particular,
\begin{equation}\label{40334}
\int_{D}(\mathcal Pw+h_{\mathbf a})wdx\leq \int_{D}(\mathcal Pw+h_{\mathbf a})\tilde wdx.
\end{equation}

We claim that $w=\tilde w$ (and thus $w\in\mathcal R_{\bar\omega}$). Define $w_{\theta}=\theta\tilde w+(1-\theta)w,$ where $\theta$ is a fixed small positive number such that $w_{\theta}\in \bar{\mathcal R}_{\bar\omega}\cap \mathbf B_{r_{1}}(\bar\omega)$.   By Lemma \ref{lem404},\begin{equation}\label{40335}
E(w_{\theta}, {\mathbf a})\leq E(\bar\omega, {\mathbf a})=E(w, {\mathbf a}).
\end{equation}
On the other hand, we compute
\begin{equation}\label{40336}
\begin{split}
E(w_{\theta}, {\mathbf a})- E (w,{\mathbf a})&=\frac{1}{2}\int_{D}w_{\theta}\mathcal Pw_{\theta}-w\mathcal Pwdx+\int_{D}h_{\mathbf a}(w_{\theta}-w)dx\\
&=\frac{1}{2}\int_{D}(w_{\theta}-w)\mathcal P(w_{\theta}-w)dx+\int_{D}(w_{\theta}-w)(\mathcal Pw+h_{\mathbf a})dx\\
&\geq \int_{D}(w_{\theta}-w)(\mathcal Pw+h_{\mathbf a})dx\\
&=\theta \int_{D}(\mathcal Pw+h_{\mathbf a})\tilde wdx- \theta\int_{D}(\mathcal Pw+h_{\mathbf a}) wdx \\
&\geq 0.
\end{split}
\end{equation}
Here we used \eqref{40334} and the fact that $\mathcal P$ is symmetric and positive (see Lemma \ref{eiyoyo}(ii)(iii)).
Therefore the inequalities in \eqref{40335} and \eqref{40336} are all equalities, which gives
\[\int_{D}(w_{\theta}-w)\mathcal P(w_{\theta}-w)dx=0.\]
By the positive definiteness of $\mathcal P$, we have $w_{\theta}=w$, and thus $w=\tilde w.$

To conclude, $w$ satisfies
\begin{equation}
w\in  {\mathcal R}_{\bar\omega}\cap \mathbf B_{r_{1}}(\bar\omega)\subset {\mathcal R}_{\bar\omega}\cap \mathbf B_{r_{0}}(\bar\omega), \quad E (w,{\mathbf a})=E(\bar\omega, {\mathbf a}).
\end{equation}
Then by Theorem \ref{thm2} we have $w=\bar\omega$.
\end{proof}

Now we are ready to prove Theorem \ref{prop41}.

\begin{proof}[Proof of  Theorem \ref{prop41}]
Take $\hat r=r_{1}/2.$  Since $\{w_{n}\}\subset  \bar{\mathcal R}_{\bar\omega}$, it is easy to see that
\begin{equation}\label{cymo}
\|w_n\|_{L^p(D)}\leq \|\bar\omega\|_{L^p(D)},\quad\forall\,n.
\end{equation}
Up to a subsequence,   $\{w_{n}\}$ has a weak limit, say $\eta$, in $L^{p}(D).$
Below we show that
$\eta=\bar\omega.$

First, it is clear  that $\eta\in \bar{\mathcal R}_{\bar\omega}$. Moreover, by  weak lower semicontinuity of the $L^p$ norm,
\[\|\eta-\bar\omega\|_{L^{p}(D)}\leq \liminf_{j\to+\infty}\|w_{n}-\bar\omega\|_{L^{p}(D)}\leq \hat r<r_{1},\]
which implies
\begin{equation}\label{lim45}
\eta\in \bar{\mathcal R}_{\bar\omega}\cap\mathbf B_{r_{1}}(\bar\omega).
\end{equation}
On the other hand, by  the weak continuity of $E(\cdot,{\mathbf a})$ in $L^{p}(D)$, we have that
\begin{equation}\label{lim459}
E (\eta,{\mathbf a})=\lim_{n\to+\infty}E(w_n, {\mathbf a})=E (\bar\omega,{\mathbf a}).
\end{equation}
Applying Lemma \ref{lem405}, we get from
 \eqref{lim45} and \eqref{lim459} that $\eta=\bar\omega$.

 Hence we have proved that  $w_n\rightharpoonup\bar\omega$ in $L^p(D)$ up to a subsequence.
 By weak lower semicontinuity, we have that
 \begin{equation}
 \|\bar\omega\|_{L^p(D)}\leq\liminf_{n\to+\infty}\|w_n\|_{L^p(D)}.
 \end{equation}
 Taking into account \eqref{cymo}, we obtain
  \begin{equation}
 \lim_{n\to+\infty}\|w_n\|_{L^p(D)}= \|\bar\omega\|_{L^p(D)}.
 \end{equation}
By uniform convexity,   we finally have that $w_n\to\bar\omega$ in $L^p(D)$  up to a subsequence.
Arguing by contradiction, we can  further  prove that the convergence actually holds for the whole sequence $\{w_{n}\}$.
\end{proof}

\section{Proof of nonlinear stability (Theorem \ref{thm1})}\label{nonlin}

With Theorem \ref{prop41} at hand, we can give the proof of Theorem \ref{thm1} now.  The basic idea is to make proper use of conserved quantities of the 2D Euler equations.

Fix $1<p<+\infty$. Let $\bar\psi$ be as in Theorem \ref{thm1}, and $\bar\omega$ be the corresponding vorticity. Then $\bar\omega$ satisfies the compactness property stated in Theorem \ref{prop41}.

Below we prove Theorem \ref{thm1} by contradiction.
Suppose  the conclusion in  Theorem \ref{thm1}  is false. Then, by Remark \ref{vico1},  there exist  some $\varepsilon_{0}>0,$  a sequence of classical Euler flows with vorticity $\{\omega_{n}\}$  and circulation vector $\{\mathbf a_{n}\}$, and a sequence of moments $\{t_n\}$, such that
\begin{equation}\label{pfth3}
\lim_{n\to+\infty}\|\omega_{n}(0,\cdot) -\bar\omega\|_{L^{p}(D)}=0,
\end{equation}
\begin{equation}\label{pfth300}
\lim_{n\to+\infty}|\mathbf a_{n}- {\mathbf a}|=0,
\end{equation}
\begin{equation}\label{pfth5}
\|\omega_{n}(t_{n},\cdot)-\bar\omega\|_{L^{p}(D)}\geq \varepsilon_{0},\quad\forall\,n.
\end{equation}
By choosing a smaller $\varepsilon_0$, we can assume that
\begin{equation}\label{pfth603}
\varepsilon_{0}< {\hat r},
\end{equation}
where $\hat r$ is the positive number in Theorem \ref{prop41}.
In addition, since it is clear that $\omega_{n}\in C([0,+\infty); L^{p}(D))$,   we can choose a new sequence of moments, still denoted by $\{t_n\}$, such that
\begin{equation}\label{pfth6}
\|\omega_{n}(t_{n},\cdot)-\bar\omega\|_{L^{p}(D)}=\varepsilon_{0},\quad\forall\,n.
\end{equation}
From \eqref{pfth603} and \eqref{pfth6}, we have
\begin{equation}\label{blamf1}
 \omega_n(t_n,\cdot)\in \mathbf B_{\hat r}(\bar\omega),\quad\forall\,n.
\end{equation}

\begin{lemma}\label{naf1}
It holds that
\begin{equation}\label{elig1}
\lim_{n\to+\infty} E(\omega_{n}(t_{n},\cdot),   \mathbf a )=E(\bar\omega,\mathbf a).
\end{equation}
\end{lemma}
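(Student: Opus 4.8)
The plan is to combine the conservation laws of the Euler equations with the two continuity properties of the energy functional $E$ established earlier in this section. The guiding observation is that each flow in the sequence conserves both its kinetic energy and its circulation vector, so its energy may be evaluated at the convenient initial time $t=0$; the only subtlety is that the conserved energy of the $n$-th flow involves its \emph{own} circulation vector $\mathbf a_n$ rather than the limiting vector $\mathbf a$ appearing in the statement, and this discrepancy must be absorbed at the end.

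First I would invoke conservation of kinetic energy (C1) together with Kelvin's circulation theorem (C2). Since the $n$-th flow has circulation vector $\mathbf a_n$ at every instant, Proposition \ref{propc3} shows that its kinetic energy at time $t_n$ equals $E(\omega_n(t_n,\cdot),\mathbf a_n)$, and conservation of energy gives
\[E(\omega_n(t_n,\cdot),\mathbf a_n)=E(\omega_n(0,\cdot),\mathbf a_n),\quad\forall\,n.\]
Next, because $\omega_n(0,\cdot)\to\bar\omega$ strongly --- hence weakly --- in $L^p(D)$ by \eqref{pfth3}, and $\mathbf a_n\to\mathbf a$ in $\mathbb R^N$ by \eqref{pfth300}, the weak continuity of $E$ (Lemma \ref{lipcoe}) yields
\[\lim_{n\to+\infty}E(\omega_n(0,\cdot),\mathbf a_n)=E(\bar\omega,\mathbf a).\]
It then remains only to replace $\mathbf a_n$ by $\mathbf a$ in the second argument. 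Here I would note that $\{\omega_n(t_n,\cdot)\}$ is bounded in $L^p(D)$, since $\|\omega_n(t_n,\cdot)\|_{L^p(D)}\le\|\bar\omega\|_{L^p(D)}+\varepsilon_0$ by \eqref{pfth6}, while $\{\mathbf a_n\}$ and $\{\mathbf a\}$ lie in a fixed bounded set of $\mathbb R^N$. Applying the local Lipschitz continuity of $E$ (Lemma \ref{lipcoe}) with the \emph{common} first argument $\omega_n(t_n,\cdot)$ then gives
\[\left|E(\omega_n(t_n,\cdot),\mathbf a)-E(\omega_n(t_n,\cdot),\mathbf a_n)\right|\le C|\mathbf a-\mathbf a_n|\to 0.\]

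Combining the three displays yields $E(\omega_n(t_n,\cdot),\mathbf a)=E(\omega_n(0,\cdot),\mathbf a_n)+o(1)\to E(\bar\omega,\mathbf a)$, which is the assertion \eqref{elig1}. I do not expect any genuine obstacle here: the argument is a short chain of equalities and limits. The only point demanding care is the bookkeeping of circulation vectors, that is, remembering that the conserved quantity is the energy relative to $\mathbf a_n$ and that the mismatch $\mathbf a_n-\mathbf a$ must be handled through the Lipschitz bound; the boundedness of $\{\omega_n(t_n,\cdot)\}$ needed to invoke that bound is supplied for free by the normalization \eqref{pfth6}.
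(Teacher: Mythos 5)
Your proposal is correct and follows essentially the same route as the paper: energy conservation (via (C1)--(C2) and Proposition \ref{propc3}) reduces the problem to time $t=0$ with circulation vector $\mathbf a_n$, the circulation mismatch $|E(\omega_n(t_n,\cdot),\mathbf a)-E(\omega_n(t_n,\cdot),\mathbf a_n)|$ is killed by the local Lipschitz bound together with the boundedness of $\{\omega_n(t_n,\cdot)\}$ in $L^p(D)$, and the remaining term converges by continuity of $E$ using \eqref{pfth3} and \eqref{pfth300}. The only cosmetic difference is that you invoke the weak-continuity statement for the term $E(\omega_n(0,\cdot),\mathbf a_n)\to E(\bar\omega,\mathbf a)$ where the paper uses the Lipschitz estimate; both apply since the convergence in \eqref{pfth3} is strong.
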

\begin{proof}
We compute  that
\begin{align*}
&|E(\omega_{n}(t_{n},\cdot),\mathbf a)-E(\bar\omega,\mathbf a)|\\
\leq& |E(\omega_{n}(t_{n},\cdot),\mathbf a)-E(\omega_{n}(t_{n},\cdot),\mathbf a_n)|+|E(\omega_{n}(t_{n},\cdot),\mathbf a_n)-E(\bar\omega,\mathbf a)|\\
=&|E(\omega_{n}(t_{n},\cdot),\mathbf a)-E(\omega_{n}(t_{n},\cdot),\mathbf a_n)|+|E(\omega_{n}(0,\cdot),\mathbf a_n)-E(\bar\omega,\mathbf a)|.
\end{align*}
Here we used the conservation of  kinetic energy
\begin{equation*}
 E(\omega_{n}(t_{n},\cdot),\mathbf a_n)=E(\omega_{n}(0,\cdot),\mathbf a_n),\quad\forall\,n.
\end{equation*}
By \eqref{pfth3}, \eqref{pfth300} and the obvious fact that $\{\omega_n(t_n,\cdot)\}$ is bounded in $L^p(D)$, we can apply  Lemma \ref{lipcoe} to get
  \[\lim_{n\to+\infty}|E(\omega_{n}(t_{n},\cdot),\mathbf a)-E(\omega_{n}(t_{n},\cdot),\mathbf a_n)|=0,\quad    \lim_{n\to+\infty}|E(\omega_{n}(0,\cdot),\mathbf a_n)-E(\bar\omega,\mathbf a)|=0.\]
 Hence \eqref{elig1} is proved.
\end{proof}

By now, we know that the sequence $\{\omega_{n}(t_{n},\cdot)\}$ satisfies \eqref{blamf1} and \eqref{elig1}. If additionally $\{\omega_{n}(t_{n},\cdot)\}\subset \bar{\mathcal R}_{\bar\omega}$, then by Theorem \ref{prop41} we have that $\omega_{n}(t_{n},\cdot)\to\bar\omega$ in $L^p(D)$ as $n\to+\infty$, a contradiction to \eqref{pfth6}. In other words, if the vorticities of the perturbed flows belong to $\bar{\mathcal R}_{\bar\omega}$, then stability has been proved.

To deal with perturbations off $\bar{\mathcal R}_{\bar\omega}$, we use the method of ``followers" introduced by Burton in \cite{B5}.


\begin{lemma}\label{follower}
For every $\omega_{n}$, there exists a ``follower'' $\sigma_{n}:[0,+\infty)\times D\to \mathbb R$,  satisfying
\begin{equation}\label{pfth8}
\sigma_{n}(t,\cdot)\in  {\mathcal R}_{\bar\omega},\quad \forall\,t>0,
\end{equation}
\begin{equation}\label{pfth81}
\sigma_{n}(t,\cdot)-\omega_{n}(t,\cdot)\in\mathcal R_{\bar\omega-\omega_{n}(0,\cdot)},\quad\forall\,t>0.
\end{equation}
\end{lemma}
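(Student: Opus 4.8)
The plan is to build each follower by transporting the steady vorticity $\bar\omega$ along the particle trajectories of the $n$-th perturbed flow, exactly as the flow transports its own vorticity. First I would introduce the flow map. Let $\mathbf v_n$ be the velocity field of the $n$-th Euler flow (so $\mathbf v_n=\nabla^\perp(\mathcal P\omega_n+h_{\mathbf a_n})$ by the Biot--Savart law), and let $\Phi_n$ solve the ODE
\[
\frac{d}{dt}\Phi_n(t,x)=\mathbf v_n(t,\Phi_n(t,x)),\qquad \Phi_n(0,x)=x.
\]
Since $\mathbf v_n\in C^1([0,+\infty)\times\bar D)$, with $\nabla\cdot\mathbf v_n=0$ and $\mathbf v_n\cdot\mathbf n=0$ on $\partial D$, standard ODE theory gives that, for each $t\geq 0$, $\Phi_n(t,\cdot)$ is a $C^1$ diffeomorphism of $\bar D$ onto itself; moreover, by Liouville's theorem the incompressibility condition makes $\Phi_n(t,\cdot)$ volume-preserving, so both $\Phi_n(t,\cdot)$ and its inverse are measure-preserving bijections of $D$.

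Next I would recall the conservation of vorticity in Lagrangian form. The $2$D Euler equations give $\partial_t\omega_n+\mathbf v_n\cdot\nabla\omega_n=0$, whence $\omega_n$ is constant along trajectories:
\[
\omega_n(t,\Phi_n(t,x))=\omega_n(0,x),\qquad\text{i.e.,}\qquad \omega_n(t,\cdot)=\omega_n(0,\cdot)\circ\Phi_n(t,\cdot)^{-1}.
\]
Mimicking this, I define the follower by transporting $\bar\omega$ with the very same flow map:
\[
\sigma_n(t,\cdot):=\bar\omega\circ\Phi_n(t,\cdot)^{-1},\qquad t\geq 0.
\]

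It then remains to verify the two rearrangement properties, both of which follow from the elementary fact that composition with a measure-preserving bijection $T$ of $D$ sends any function $f$ to a rearrangement of $f$; this is immediate from the superlevel-set definition \eqref{rwdef}, since $|\{x\mid f(T(x))>s\}|=|T^{-1}(\{y\mid f(y)>s\})|=|\{y\mid f(y)>s\}|$. Applying this with $T=\Phi_n(t,\cdot)^{-1}$ and $f=\bar\omega$ yields $\sigma_n(t,\cdot)\in\mathcal R_{\bar\omega}$, which is \eqref{pfth8}. For \eqref{pfth81}, using the Lagrangian forms of both $\sigma_n$ and $\omega_n$ I compute
\[
\sigma_n(t,\cdot)-\omega_n(t,\cdot)=\bigl(\bar\omega-\omega_n(0,\cdot)\bigr)\circ\Phi_n(t,\cdot)^{-1},
\]
and the same fact with $f=\bar\omega-\omega_n(0,\cdot)$ gives $\sigma_n(t,\cdot)-\omega_n(t,\cdot)\in\mathcal R_{\bar\omega-\omega_n(0,\cdot)}$.

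The main obstacle is not the construction, which is transparent, but the careful justification that $\Phi_n(t,\cdot)$ is a genuine measure-preserving bijection of $D$ for every $t$: one must use the impermeability condition $\mathbf v_n\cdot\mathbf n=0$ to keep trajectories inside $\bar D$ (so that $\Phi_n(t,\cdot)$ maps $\bar D$ into itself and, by reversibility of the flow, onto itself), and the divergence-free condition to guarantee volume preservation. For classical solutions this is standard, but it is precisely the point where the regularity hypotheses on the Euler flow are essential.
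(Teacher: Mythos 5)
Your proposal is correct and follows essentially the same argument as the paper: define $\sigma_n(t,\cdot)=\bar\omega\circ\Phi_n(t,\cdot)^{-1}$ using the flow map of $\mathbf v_n$, invoke Liouville's theorem to get that $\Phi_n(t,\cdot)$ and its inverse are area-preserving, and combine this with the vorticity-transport formula $\omega_n(t,\cdot)=\omega_n(0,\cdot)\circ\Phi_n(t,\cdot)^{-1}$ to obtain both \eqref{pfth8} and \eqref{pfth81}. Your explicit superlevel-set verification that composition with a measure-preserving bijection preserves rearrangement classes is a detail the paper leaves implicit, but the construction and logic are identical.
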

\begin{proof}

Denote by $\mathbf v_n$ the velocity field of the Euler flow determined by $\omega_n$ and $\mathbf a_n$, or equivalently, $\mathbf v_n=\nabla^\perp(\mathcal P\omega_n+h_{\mathbf a_n}).$
Let $\Phi_{n}:[0,+\infty)\times D\to D$ be the flow map related to $\mathbf v_n$,  i.e.,
\begin{equation}\label{flowmap}
\frac{d\Phi_{n}(t,x)}{dt}=\mathbf v_n(t,\Phi_{n}(t,x)), \,\,t\in\mathbb R;\quad
\Phi_{n}(0,x)=x,\,\,\forall\,x\in  D.
\end{equation}
Note that \eqref{flowmap} has a global solution $\Phi_{n}\in C^{1}([0,+\infty)\times D)$ since $\mathbf v_n\in C^{1}([0,+\infty)\times   D)$. Moreover, for  fixed $t\geq 0,$ $\Phi_{n}(t,\cdot)$ is one-to-one from $D$ onto $D$.
Denote by $\Psi_{n}(t,\cdot)$ the inverse map of $\Phi_{n}(t,\cdot)$. Define
\begin{equation}\label{naf3}
\sigma_{n}(t,x)=\bar\omega(\Psi_{n}(t,x)).
\end{equation}
Below we show that $\sigma_{n}$ satisfies \eqref{pfth8} and \eqref{pfth81}.

Since $\mathbf v_{n}$ is obviously divergence-free, by the Liouville theorem (see \cite{MPu}, p. 48), $\Phi_{n}$, and thus $\Psi_{n}$, are area-preserving, which implies that
\begin{equation}\label{pfth8i}
\sigma_{n}(t,\cdot)\in  {\mathcal R}_{\bar\omega},\quad \forall\,t>0.
\end{equation}
On the other hand, by the vorticity-transport formula (see \cite{MB}, p.20),
\begin{equation}\label{naf4}
\omega_{n}(t,x)=\omega_{n,0}(\Psi_{n}(t,x)), \quad \omega_{n,0}:=\omega_{n}(0,\cdot).
\end{equation}
From \eqref{naf3} and \eqref{naf4},
we obtain
\[(\sigma_{n}-\omega_{n})(t,x)=(\bar\omega-\omega_{n,0})(\Psi_{n}(t,x)).\]
Therefore
\[\sigma_{n}(t,\cdot)-\omega_{n}(t,\cdot)\in\mathcal R_{\bar\omega-\omega_{n,0}},\quad\forall\,t>0.\]
\end{proof}

\begin{lemma}\label{naf71}
It holds that
\begin{equation}\label{pfth90}
\lim_{n\to+\infty}\|\sigma_{n}(t_{n},\cdot)-\omega_{n}(t_{n},\cdot)\|_{L^{p}(D)}=0.
\end{equation}
\end{lemma}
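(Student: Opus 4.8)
The plan is to exploit the single structural fact that the $L^p$ norm is constant on any rearrangement class, and then feed the initial-data convergence \eqref{pfth3} through the defining property \eqref{pfth81} of the follower. First I would record why rearrangements preserve $L^p$ norms: if $v\in\mathcal R_w$, then by \eqref{rwdef} the functions $v$ and $w$ share the same super-level set measures $|\{v>s\}|=|\{w>s\}|$ for every $s\in\mathbb R$, hence the same distribution function; consequently $\int_D F(v)\,dx=\int_D F(w)\,dx$ for every Borel $F$ for which the integrals make sense (this is the same layer-cake principle already invoked in \eqref{brol}). Taking $F(t)=|t|^p$ gives $\|v\|_{L^p(D)}=\|w\|_{L^p(D)}$ for all $v\in\mathcal R_w$.

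Next I would apply the follower property \eqref{pfth81} at the particular time $t=t_n$, which asserts
\[
\sigma_{n}(t_{n},\cdot)-\omega_{n}(t_{n},\cdot)\in\mathcal R_{\bar\omega-\omega_{n}(0,\cdot)}.
\]
The function generating this rearrangement class is $\bar\omega-\omega_n(0,\cdot)$, which is itself an admissible representative of the class. By the norm-invariance just established, it therefore follows that
\[
\|\sigma_{n}(t_{n},\cdot)-\omega_{n}(t_{n},\cdot)\|_{L^{p}(D)}=\|\bar\omega-\omega_{n}(0,\cdot)\|_{L^{p}(D)},\quad\forall\,n.
\]
In fact this identity holds verbatim at every time $t>0$, not just at $t=t_n$, reflecting the Lagrangian transport structure behind the construction of $\sigma_n$; the essential content is that the \emph{difference} $\sigma_n-\omega_n$ is transported by the same measure-preserving flow map and hence never changes its distribution.

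Finally I would pass to the limit: the right-hand side of the displayed identity is exactly $\|\omega_{n}(0,\cdot)-\bar\omega\|_{L^{p}(D)}$, which tends to $0$ as $n\to+\infty$ by the hypothesis \eqref{pfth3}. This yields \eqref{pfth90} immediately. I do not anticipate any genuine obstacle here; the proof is a direct two-line consequence of the construction in Lemma \ref{follower}. The only point requiring a word of care is the justification that equidistribution forces equality of $L^p$ norms, which I would state explicitly rather than treat as self-evident, since it is the sole analytic ingredient that makes the argument work.
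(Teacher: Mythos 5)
Your proposal is correct and is essentially the paper's own proof: the paper likewise deduces $\|\sigma_{n}(t,\cdot)-\omega_{n}(t,\cdot)\|_{L^{p}(D)}=\|\bar\omega-\omega_{n}(0,\cdot)\|_{L^{p}(D)}$ for all $t>0$ directly from \eqref{pfth81} and concludes via \eqref{pfth3}. The only difference is that you make explicit the (standard, and correct) equimeasurability argument showing rearrangements have equal $L^{p}$ norms, which the paper treats as implicit.
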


\begin{proof}
By \eqref{pfth81}, we have that
\begin{equation}\label{pfth81p}
\|\sigma_{n}(t,\cdot)-\omega_{n}(t,\cdot)\|_{L^p(D)}=\|\bar\omega-\omega_n(0,\cdot)\|_{L^p(D)},\quad \forall\,t>0.
\end{equation}
Hence the desired result follows from \eqref{pfth3} and \eqref{pfth81p}.
\end{proof}

\begin{lemma}
There exists some positive integer $N_0$, such that for any $n\geq N_0$, it holds that
\begin{equation}\label{pfth16}
 \sigma_{n}(t_{n},\cdot)\in \mathbf B_{\hat r}(\bar\omega).
\end{equation}
\end{lemma}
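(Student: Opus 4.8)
The plan is to obtain \eqref{pfth16} as an immediate consequence of the triangle inequality in $L^p(D)$, combined with the two facts already established just above: the exact distance \eqref{pfth6} and the vanishing of the follower gap in Lemma \ref{naf71}. The idea is to route the distance from $\sigma_n(t_n,\cdot)$ to $\bar\omega$ through the intermediate point $\omega_n(t_n,\cdot)$, so that one contribution is controlled by the known constant $\varepsilon_0$ and the other is made arbitrarily small for large $n$.

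Concretely, first I would write
\[
\|\sigma_{n}(t_{n},\cdot)-\bar\omega\|_{L^{p}(D)}\leq \|\sigma_{n}(t_{n},\cdot)-\omega_{n}(t_{n},\cdot)\|_{L^{p}(D)}+\|\omega_{n}(t_{n},\cdot)-\bar\omega\|_{L^{p}(D)}.
\]
By \eqref{pfth6}, the second term on the right-hand side is exactly $\varepsilon_{0}$ for every $n$. By Lemma \ref{naf71}, the first term tends to $0$ as $n\to+\infty$. The crucial structural point is the strict inequality $\varepsilon_{0}<\hat r$ recorded in \eqref{pfth603}, which leaves a positive gap $\hat r-\varepsilon_0>0$ to absorb the follower error.

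Thus the final step is to invoke the definition of the limit in \eqref{pfth90}: there exists some positive integer $N_0$ such that for all $n\geq N_0$ one has $\|\sigma_{n}(t_{n},\cdot)-\omega_{n}(t_{n},\cdot)\|_{L^{p}(D)}<\hat r-\varepsilon_{0}$. Substituting this into the displayed estimate yields, for $n\geq N_0$,
\[
\|\sigma_{n}(t_{n},\cdot)-\bar\omega\|_{L^{p}(D)}<(\hat r-\varepsilon_{0})+\varepsilon_{0}=\hat r,
\]
which is precisely the assertion $\sigma_{n}(t_{n},\cdot)\in\mathbf B_{\hat r}(\bar\omega)$.

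I do not expect any genuine obstacle here: the statement is a short quantitative bookkeeping lemma whose entire content is the triangle inequality together with the gap $\hat r-\varepsilon_0$. The only thing to be careful about is to keep the inequalities strict and to use \eqref{pfth6} (the precise value $\varepsilon_0$) rather than merely \eqref{pfth5} or \eqref{blamf1}, so that the two estimates combine to land strictly below $\hat r$ rather than merely at or near it. This lemma then feeds into the next step, where $\{\sigma_n(t_n,\cdot)\}\subset\bar{\mathcal R}_{\bar\omega}$ together with membership in $\mathbf B_{\hat r}(\bar\omega)$ and the energy convergence \eqref{elig1} will allow the compactness result Theorem \ref{prop41} to be applied to the followers, producing the contradiction that completes the proof of Theorem \ref{thm1}.
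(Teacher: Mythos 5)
Your proof is correct and matches the paper's argument exactly: the paper likewise combines the triangle inequality with \eqref{pfth603}, \eqref{pfth6} and \eqref{pfth90} to conclude $\|\sigma_{n}(t_{n},\cdot)-\bar\omega\|_{L^{p}(D)}<\hat r$ for $n$ large. Your version merely spells out the explicit choice of $N_0$ via the gap $\hat r-\varepsilon_0>0$, which the paper leaves implicit.
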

\begin{proof}
By \eqref{pfth603}, \eqref{pfth6} and \eqref{pfth90}, we have that
\begin{equation}\label{pfth16t}
\|\sigma_{n}(t_{n},\cdot) - \bar\omega\|_{L^{p}(D)}\leq \|\sigma_{n}(t_{n},\cdot) - \omega_{n}(t_{n},\cdot)\|_{L^{p}(D)}+\| \omega_{n}(t_{n},\cdot)- \bar\omega\|_{L^{p}(D)}<\hat r,
\end{equation}
provided that $n$ is large enough.
\end{proof}

\begin{lemma}
It holds that
\begin{equation}\label{pfth15}
\lim_{n\to+\infty}E(\sigma_{n}(t_{n},\cdot),\mathbf a)=E(\bar\omega,\mathbf a).
\end{equation}
\end{lemma}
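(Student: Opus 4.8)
The plan is to combine the energy convergence for $\{\omega_n(t_n,\cdot)\}$ already obtained in \eqref{elig1} with the $L^p$-closeness of $\sigma_n(t_n,\cdot)$ to $\omega_n(t_n,\cdot)$ from \eqref{pfth90}, glued together by the local Lipschitz continuity of the energy functional $E$. First I would split, using the triangle inequality,
\[
|E(\sigma_{n}(t_{n},\cdot),\mathbf a)-E(\bar\omega,\mathbf a)|\leq |E(\sigma_{n}(t_{n},\cdot),\mathbf a)-E(\omega_{n}(t_{n},\cdot),\mathbf a)|+|E(\omega_{n}(t_{n},\cdot),\mathbf a)-E(\bar\omega,\mathbf a)|.
\]
The second term on the right tends to $0$ as $n\to+\infty$ directly by \eqref{elig1}, so everything reduces to controlling the first term.

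For the first term I would invoke the local Lipschitz estimate \eqref{lipc3} with the \emph{common} circulation vector $\mathbf a$, which makes the $|\mathbf a-\mathbf b|$ contribution vanish. To apply it I need both $\{\sigma_n(t_n,\cdot)\}$ and $\{\omega_n(t_n,\cdot)\}$ to lie in a fixed bounded subset of $L^p(D)$. This holds because $\sigma_n(t_n,\cdot)\in\mathcal R_{\bar\omega}$ forces $\|\sigma_n(t_n,\cdot)\|_{L^p(D)}=\|\bar\omega\|_{L^p(D)}$ (rearrangements preserve the $L^p$ norm), while \eqref{pfth6} gives $\|\omega_n(t_n,\cdot)\|_{L^p(D)}\leq\|\bar\omega\|_{L^p(D)}+\varepsilon_0$. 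Taking $K$ to be any ball in $L^p(D)$ containing both sequences and $I=\{\mathbf a\}$, Lemma \ref{lipcoe} supplies a constant $C>0$ such that
\[
|E(\sigma_{n}(t_{n},\cdot),\mathbf a)-E(\omega_{n}(t_{n},\cdot),\mathbf a)|\leq C\|\sigma_{n}(t_{n},\cdot)-\omega_{n}(t_{n},\cdot)\|_{L^{p}(D)}.
\]

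Finally, the right-hand side tends to $0$ by \eqref{pfth90}, so the first term also vanishes in the limit, and combining the two estimates yields \eqref{pfth15}. There is no genuine obstacle here: the statement is a direct consequence of the conservation of kinetic energy (which underlies \eqref{elig1}), the rearrangement-invariance of the $L^p$ norm on $\mathcal R_{\bar\omega}$, and the local Lipschitz continuity of $E$. The only point requiring a moment's care is verifying the uniform $L^p$-boundedness needed to legitimately invoke \eqref{lipc3}, and this is immediate from the facts just recalled.
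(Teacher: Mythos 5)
Your proof is correct and follows essentially the same route as the paper: both reduce \eqref{pfth15} to \eqref{elig1} together with a bound on $|E(\sigma_{n}(t_{n},\cdot),\mathbf a)-E(\omega_{n}(t_{n},\cdot),\mathbf a)|$ obtained from the local Lipschitz continuity of $E$ (Lemma \ref{lipcoe}) and \eqref{pfth90}. The only difference is that you spell out the uniform $L^p$-boundedness needed to invoke \eqref{lipc3} (via rearrangement invariance of the norm and \eqref{pfth6}), a verification the paper leaves implicit.
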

\begin{proof}
First, by  Lemma  \ref{lipcoe} and \eqref{pfth90}, we  have that
\begin{equation}\label{pfth13}
 \lim_{n\to+\infty}|E(\sigma_{n}(t_{n},\cdot),\mathbf a)- E(\omega_{n}(t_{n},\cdot),\mathbf a)|= 0.
 \end{equation}
 Then \eqref{pfth15} follows from \eqref{elig1} and \eqref{pfth13}.

\end{proof}

Now by \eqref{pfth8}, \eqref{pfth16} and \eqref{pfth15}, we can apply Theorem \ref{prop41} to obtain  that $\sigma_{n}(t_{n},\cdot)\to\bar\omega$ in $L^p(D)$ as $n\to+\infty,$ which in combination with \eqref{pfth90} gives
\begin{equation}\label{pfth9099}
\lim_{n\to+\infty}\|\omega_{n}(t_{n},\cdot)-\bar\omega\|_{L^{p}(D)}=0.
\end{equation}
This obviously contradicts \eqref{pfth6}. Therefore Theorem \ref{thm1} is proved.

 \begin{remark}\label{lessregular}
We briefly discuss to what extent our stability result holds for less regular perturbations.
 For an Euler flow, the vorticity $\omega$ and the circulation vector $\mathbf b$ formally satisfy the following nonlinear transport equation
\begin{equation}\label{vore}
\partial_t\omega+\nabla\cdot\left(\omega\nabla^\perp(\mathcal P\omega+h_\mathbf b)\right)=0,\quad t>0,\,\,x\in D.
 \end{equation}
If  $\omega\in L^\infty_{\rm loc}((0,+\infty); L^p(D))$ with $4/3\leq p<+\infty$,  we can interpret \eqref{vore} in the  following distributional sense:
\[\int_0^{+\infty}\int_D\partial_t\zeta \omega+\omega\nabla^\perp(\mathcal P\omega+h_{\mathbf b})\cdot\nabla\zeta dxdt,\quad \forall\,\zeta\in C_c^\infty((0,+\infty)\times D).\]
Let $\bar\omega\in L^p(D)$ and $\mathbf a\in\mathbb R^N$. Suppose $\bar\omega$ is an isolated local maximizer of $E(\cdot,\mathbf a)$ relative $\mathcal R_{\bar\omega}$. Repeating  the arguments in Sections 5 and 6, we can prove the following  assertion:

For any $\varepsilon>0$, there exists some $\delta>0$, such that for any distributional solution $\omega$ of \eqref{vore} satisfying
 \begin{itemize}
 \item[(a)] $\omega\in C([0,+\infty);L^p(D))$,
 \item[(b)] $\omega(t,\cdot)\in\mathcal R_{\omega(0,\cdot)}$ for all $t>0$,
 \item[(c)]$E(\omega(0,\cdot),\mathbf b)=E(\omega(t,\cdot),\mathbf b)$  for all $t>0$,
 \item[(d)] there exists a follower of $\omega$ as in Lemma \ref{follower},
 \end{itemize}
it holds that
\[ \|\omega(0,\cdot)-\bar{\omega}\|_{L^{p}(D)}+|\mathbf b-\mathbf a|<\delta\quad\Longrightarrow\quad \|\omega(t,\cdot)-\bar{\omega}\|_{L^{p}(D)}<\varepsilon \quad \forall\,t>0.\]
The existence of a distributional solution of \eqref{vore} satisfying (a)-(d) is an interesting problem worth studying, but is beyond the purpose of this paper. When $p\geq 2$ and $D$ is simply-connected (corresponding to $\mathcal P=\mathcal G$ and $h_{\mathbf b}=0$), a complete answer was given in \cite{B5}.
  \end{remark}

\appendix

\section{Proof of Theorem \ref{thm0} }\label{apdx1}

We first give some assumptions and notation. Let $D,$ $\bar\psi$ and $g$ be as in Theorem \ref{thm0}.
Denote
\[\bar m=\min_{\bar D}\bar\psi,\quad \bar M=\max_{\bar D}\bar\psi.\]
As in Section \ref{subsec51},  we can redefine $g$ outside  $[\bar m,\bar M]$ as follows:
\[g(s)=
\begin{cases}
g'(\bar m)(s-\bar m)+g(\bar m)&s<\bar m,\\
g'(\bar M)(s-\bar M)+g(\bar M) &s>\bar M.
\end{cases}
\]
Then $g\in C^{1}(\mathbb R)$,  and satisfies
\begin{equation}\label{cmm3}
 \min_{\mathbb R}g'=\min_{\bar D}g'(\bar\psi),\quad \max_{\mathbb R}g'=\max_{\bar D}g'(\bar\psi).
 \end{equation}
 In addition, the inverse of $g$ exists.
  Denote $f=g^{-1}.$ It is clear that $f\in C^{1}(\mathbb R)$, and
\begin{equation}\label{appb1}
\min_{\mathbb R}f'=\frac{1}{\max_{\bar D}g'(\bar\psi)},\quad \max_{\mathbb R}f'=\frac{1}{\min_{\bar D}g'(\bar\psi)}.
\end{equation}

Define
\[\mathcal H(u)=\frac{1}{2}\int_{D}|\nabla u|^{2}dx-\int_{D}F(-\Delta u)dx,\quad \mbox{where } F(s)=\int_{0}^{s}f(\tau)d\tau.\]
By Lemma \ref{lt}(ii),   $\mathcal  H$ is in fact the stream function version of  $EC$  (up to some suitable constant).

\begin{proof}[Proof of Theorem \ref{thm0}]
We only need to prove \eqref{conc3}.
Let $\bar\psi$ and $\varphi$ be as in Theorem \ref{thm0}.   For simplicity, denote $\varphi_{t}=\varphi(t,\cdot)$.
 By energy and vorticity conservation, we have
 \[\mathcal  H(\bar\psi+\varphi_t)=\mathcal H(\bar\psi+\varphi_{0}),\quad\forall\,t>0,\]
 which implies that
\begin{equation}\label{hzy1}
\mathcal H(\bar\psi+\varphi_t)-\mathcal H(\bar\psi)=\mathcal H(\bar\psi+\varphi_{0})-H(\bar\psi),\quad\forall\,t>0.
\end{equation}
Denote $\bar\omega=-\Delta\bar\psi,$ $\phi_{t} =-\Delta\varphi_{t}.$ After a simple computation, the left-hand side of \eqref{hzy1} can be written as
\begin{equation}\label{appb2}
\mathcal H(\bar\psi+\varphi_t)-\mathcal H(\bar\psi)= \int_{D}\nabla\bar\psi\cdot\nabla\varphi_{t}dx+\frac{1}{2}\int_{D}|\nabla\varphi_{t}|^{2}dx-\int_{D}F(\bar\omega+\phi_{t})-F(\bar\omega)dx.
\end{equation}
Integrating by parts, we have that
\begin{equation}\label{appb3}
\begin{split}
 \int_{D}\nabla\bar\psi\cdot\nabla\varphi_{t}dx &=\int_{\partial D} \bar\psi\nabla\varphi_{t}\cdot d\mathbf n+\int_{D}\bar\psi\phi_{t} dx\\
 &=\sum_{i=1}^{N}\bar\psi|_{\Gamma_{i}}\int_{\Gamma_{i}} \nabla\varphi_{t}\cdot d\mathbf n+\int_{D}\bar\psi\phi_{t} dx\\
 &=\int_{D}\bar\psi\phi_{t} dx.
 \end{split}
\end{equation}
Note that we used the assumption that $\varphi\in \mathcal Y$ in the last equality of \eqref{appb3}.
On the other hand, by Taylor expansion,
\begin{equation}\label{appb4}
f(\bar\omega)\phi_{t}+\frac{1}{2}\min_{\mathbb R}f'\int_{D}\phi^{2}\leq F(\bar\omega+\phi_{t})-F(\bar\omega) \leq \int_{D}f(\bar\omega)\phi_{t}+\frac{1}{2}\max_{\mathbb R}f' \phi_{t}^{2}.
\end{equation}
Inserting \eqref{appb3} and \eqref{appb4} into \eqref{appb2}, and using the fact that $\bar\psi=f(\bar\omega)$, we have that
\begin{equation}\label{appb5}
\begin{split}
\frac{1}{2}\int_{D}|\nabla\varphi_{t}|^{2}dx-\frac{1}{2}\max_{\mathbb R}f' \int_{D}\phi_{t}^{2}dx
&\leq \mathcal H(\bar\psi+\varphi_t)-\mathcal H(\bar\psi)\\
&\leq \frac{1}{2}\int_{D}|\nabla\varphi_{t}|^{2}dx-\frac{1}{2}\min_{\mathbb R}f'\int_{D} \phi_{t}^{2}dx.
\end{split}
\end{equation}
Similarly, the right-hand side of \eqref{hzy1} satisfies the following estimate:
\begin{equation}\label{appb6}
\begin{split}
\frac{1}{2}\int_{D}|\nabla\varphi_{0}|^{2}dx-\frac{1}{2}\max_{\mathbb R}f' \int_{D}\phi_{0}^{2}dx
&\leq \mathcal H(\bar\psi+\varphi_0)-\mathcal H(\bar\psi)\\
&\leq \frac{1}{2}\int_{D}|\nabla\varphi_{0}|^{2}dx-\frac{1}{2}\min_{\mathbb R}f' \int_{D}\phi_{0}^{2}dx.
\end{split}
\end{equation}
Combining \eqref{hzy1}, \eqref{appb5} and \eqref{appb6}, we obtain
\begin{equation*}\label{appb8}
 \min_{\mathbb R}f'\int_{D} \phi_{t}^{2}dx-\int_{D}|\nabla\varphi_{t}|^{2}dx\leq \max_{\mathbb R}f'\int_{D} \phi_{0}^{2}dx-\int_{D}|\nabla\varphi_{0}|^{2}dx,\quad\forall\,t>0,
 \end{equation*}
or equivalently (recalling \eqref{appb1}),
\begin{equation}\label{appb9}
\frac{1}{\max_{\bar D}g'(\bar\psi)}\int_{D} \phi_{t}^{2}dx-\int_{D}|\nabla\varphi_{t}|^{2}dx\leq  \frac{1}{\min_{\bar D}g'(\bar\psi)}\int_{D} \phi_{0}^{2}dx-\int_{D}|\nabla\varphi_{0}|^{2}dx,\quad\forall\,t>0.
\end{equation}
Since $\varphi_{t}\in\mathcal Y$ for all $t> 0$,  we deduce from \eqref{mathsfc}  that
\begin{equation}\label{appb91}
\int_{D}|\nabla \varphi_{t}|^{2}dx\leq \frac{1}{\mathsf c} \int_{D}|\Delta\varphi_{t}|^{2}dx=\frac{1}{\mathsf c} \int_{D}\phi_{t}^{2}dx,\quad\forall\,t>0.
\end{equation}
Inserting \eqref{appb91} into \eqref{appb9}, we obtain
  \begin{equation*}\label{conc30}
 \left(\frac{1}{\max_{\bar D}g'(\bar\psi)}-\frac{1}{\mathsf c}\right)\int_{D}\phi_{t}^{2}dx \leq \frac{1}{\min_{\bar D}g'(\bar\psi)}\int_{D} \phi_{0}^{2}dx-\int_{D}|\nabla\varphi_{0}|^{2}dx,\quad\forall\,t>0,
\end{equation*}
which implies \eqref{conc3} immediately.
\end{proof}

\section{Properties of $\mathcal X, \mathcal Y$ and $\mathcal P$}\label{apdx2}

In this section, we deduce some properties concerning the two function spaces $\mathcal X, \mathcal Y$ and   the operator $\mathcal P$. Recall their definitions:
\[
\mathcal X=\left\{u\in H^{1}(D)\mid u=v+\sum_{i=1}^{N}\theta_{i}\zeta_{i},\, v\in H^{1}_0(D), \,\theta_{i}\in\mathbb R,\, i=1\cdot\cdot\cdot,N\right\},
\]
\[
\mathcal Y=\left\{u\in H^{2}(D)\mid u\in\mathcal X, \,\int_{\Gamma_{i}}\nabla u\cdot d\mathbf n=0, \,i=1\cdot\cdot\cdot,N\right\},
\]
\[\mathcal P\phi=\mathcal G\phi+\sum_{i,j=1}^{N}q_{ij}\int_{D}\zeta_{i}\phi dx\zeta_{j}.\]

\begin{lemma}\label{eiyoyo}
 For the operator $\mathcal P$, the following assertions hold:
 \begin{itemize}
 \item[(i)]For any $1<p<+\infty,$ $\mathcal P$ is a well-defined, bounded, linear operator from   $L^{p}(D)$ to $W^{2,p}(D).$
  \item[(ii)] $\mathcal P$ is symmetric, i.e.,
  \begin{equation}\label{psym}
  \int_{D}\phi_{1}\mathcal P\phi_{2}dx=\int_{D} \phi_{2}\mathcal P\phi_{1}dx,\quad \forall\,\phi_{1}, \phi_{2}\in L^{p}(D).
  \end{equation}
  Note that the integrals in \eqref{psym} make sense since $\mathcal P\phi\in W^{2,p}(D)\hookrightarrow C(\bar D)$ for any $\phi\in L^p(D)$.
    \item[(iii)] $\mathcal P$ is positive definite, i.e.,
  \begin{equation}\label{ppdef}
  \int_{D}\phi \mathcal P\phi dx\geq 0,\quad \forall\,\phi\in L^{p}(D),
  \end{equation}
  and the equality holds if and only if $\phi=0.$
  \end{itemize}
\end{lemma}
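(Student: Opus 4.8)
The plan is to work directly from the defining formula
\[
\mathcal P\phi=\mathcal G\phi+\sum_{i,j=1}^{N}q_{ij}\Big(\int_{D}\zeta_{i}\phi\,dx\Big)\zeta_{j},
\]
treating $\mathcal P$ as the Dirichlet Green operator $\mathcal G$ plus a finite-rank correction built from the fixed functions $\zeta_{1},\dots,\zeta_{N}\in C^{\infty}(\bar D)$. For brevity I write $b_{i}(\phi)=\int_{D}\zeta_{i}\phi\,dx$. Each assertion then reduces to a standard fact about $\mathcal G$ combined with the symmetry and positive definiteness of the matrix $(q_{ij})$ recorded in (N3)--(N4).

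For (i) I would first invoke the Calder\'on--Zygmund estimate for the Dirichlet problem: since $\mathcal G\phi$ solves $-\Delta(\mathcal G\phi)=\phi$ with zero boundary data on the smooth domain $D$, standard elliptic regularity yields $\mathcal G\phi\in W^{2,p}(D)$ together with $\|\mathcal G\phi\|_{W^{2,p}(D)}\le C\|\phi\|_{L^{p}(D)}$ for every $1<p<+\infty$ (this is exactly the estimate already used in the proof of Lemma \ref{propd1}). The correction term is a linear combination of the fixed functions $\zeta_{j}\in W^{2,p}(D)$ whose scalar coefficients $\sum_{i}q_{ij}b_{i}(\phi)$ are, by H\"older's inequality and $\zeta_{i}\in L^{p^{*}}(D)$, bounded by $C\|\phi\|_{L^{p}(D)}$. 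Adding the two contributions shows $\mathcal P$ maps $L^{p}(D)$ boundedly into $W^{2,p}(D)$, and linearity is immediate from the formula.

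For (ii) I would handle the two pieces separately. Integrating by parts and using that $\mathcal G\phi$ vanishes on $\partial D$, the Green part obeys
\[
\int_{D}\phi_{1}\mathcal G\phi_{2}\,dx=\int_{D}\nabla\mathcal G\phi_{1}\cdot\nabla\mathcal G\phi_{2}\,dx=\int_{D}\phi_{2}\mathcal G\phi_{1}\,dx,
\]
which is manifestly symmetric. The correction contributes $\sum_{i,j}q_{ij}b_{i}(\phi_{2})b_{j}(\phi_{1})$; relabelling $i\leftrightarrow j$ and using $q_{ij}=q_{ji}$ shows this is unchanged when $\phi_{1}$ and $\phi_{2}$ are swapped, giving \eqref{psym}. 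For (iii) the same decomposition gives
\[
\int_{D}\phi\,\mathcal P\phi\,dx=\int_{D}|\nabla\mathcal G\phi|^{2}\,dx+\sum_{i,j=1}^{N}q_{ij}b_{i}(\phi)b_{j}(\phi),
\]
in which the first term is nonnegative and the second is $\mathbf b^{\mathsf T}Q\mathbf b\ge0$ since $Q=(q_{ij})$ is positive definite. Hence $\int_{D}\phi\,\mathcal P\phi\,dx\ge0$. If it vanishes, then because both terms are nonnegative the first must vanish, so $\mathcal G\phi$ is constant; being zero on $\partial D$ forces $\mathcal G\phi\equiv0$ and therefore $\phi=-\Delta\mathcal G\phi=0$, which settles the equality case.

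Since each statement collapses to a well-known property of the Dirichlet Green operator together with elementary linear algebra of $(q_{ij})$, no genuinely hard step arises; the only external input is the $W^{2,p}$ bound in (i), which is the Calder\'on--Zygmund theory already in use elsewhere in the paper. The mild care needed is simply to keep the finite-rank correction separate from $\mathcal G\phi$ throughout, so that symmetry and positivity of $(q_{ij})$ can be exploited cleanly.
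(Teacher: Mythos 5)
Your proposal is correct and takes essentially the same route as the paper: the paper's own (one-line) proof reduces all three assertions to the boundedness, symmetry, and positive definiteness of $\mathcal G$ together with the symmetry and positive definiteness of the matrix $(q_{ij})$, which are exactly the facts you establish after splitting $\mathcal P$ into $\mathcal G$ plus the finite-rank correction. The only difference is that you fill in details the paper leaves implicit, namely the integration by parts giving $\int_{D}\phi\,\mathcal G\phi\,dx=\int_{D}|\nabla\mathcal G\phi|^{2}\,dx$ and the treatment of the equality case in (iii).
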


  \begin{proof}
Item (i) follows from the fact that $\mathcal G$ is a bounded linear operator from   $L^{p}(D)$ to $W^{2,p}(D)$ (by standard elliptic estimates). Items (ii) and (iii) follow from the facts that $\mathcal G$ is a symmetric and positive definite operator,  and $(q_{ij})$ a symmetric and positive definite  matrix.
\end{proof}

Recall the notation (N1)-(N5) introduced in Section 1.
  \begin{lemma}\label{propb1}
 For the space $\mathcal X$, the following assertions hold:
  \begin{itemize}
\item [(i)] For any $u\in \mathcal X,$ write $u=v+\sum_{i=1}^{N}\theta_{i}\zeta_{i}$, where  $v\in H^{1}_0(D), $ $\theta_{1},\cdot\cdot\cdot,\theta_N\in\mathbb R$. Then
\begin{equation}\label{xpp1}
\theta_{i}=\sum_{j=1}^{N}\left(q_{ij}\int_{D}\nabla u\cdot\nabla\zeta_{j}dx\right),\quad i=1,\cdot\cdot\cdot,N.
\end{equation}
 \item[(ii)] $\mathcal X$ is  closed, thus weakly closed, in $H^{1}(D).$
  \end{itemize}
 \end{lemma}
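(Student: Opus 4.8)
The plan is to establish part (i) first and then extract part (ii) from it with essentially no extra work. The central observation for (i) is that the harmonic functions $\zeta_j$ are orthogonal, in the Dirichlet inner product, to all of $H^1_0(D)$. Concretely, given a decomposition $u = v + \sum_{i=1}^N \theta_i \zeta_i$ with $v \in H^1_0(D)$, I would pair $\nabla u$ against $\nabla \zeta_j$ and integrate by parts: since $v$ has zero trace on $\partial D$ and $\Delta \zeta_j = 0$ in $D$ by \eqref{zet1n}, the cross term satisfies
\[
\int_D \nabla v \cdot \nabla \zeta_j \, dx = \int_{\partial D} v\, \nabla\zeta_j \cdot d\mathbf{n} - \int_D v\, \Delta \zeta_j \, dx = 0.
\]
This leaves
\[
\int_D \nabla u \cdot \nabla \zeta_j \, dx = \sum_{k=1}^N \theta_k \int_D \nabla \zeta_k \cdot \nabla \zeta_j \, dx = \sum_{k=1}^N \theta_k\, p_{kj},
\]
with $p_{kj}$ as in \eqref{zet2n}.

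Next I would multiply by $q_{ij}$ and sum over $j$, using that $(q_{ij})$ is the inverse of the symmetric matrix $(p_{ij})$ (notation (N4)). By symmetry $p_{kj} = p_{jk}$, so $\sum_j q_{ij} p_{kj} = \sum_j q_{ij} p_{jk} = \delta_{ik}$, and therefore $\sum_j q_{ij}\int_D \nabla u \cdot \nabla \zeta_j\, dx = \theta_i$, which is exactly \eqref{xpp1}. As a byproduct, this shows that the coefficients $\theta_i$ are determined by $u$ alone, so the decomposition defining $\mathcal X$ is unique.

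For part (ii), closedness of $\mathcal X$ in $H^1(D)$ follows from (i) together with the closedness of $H^1_0(D)$. Taking a sequence $u_n \to u$ in $H^1(D)$ with $u_n = v_n + \sum_i \theta_{n,i}\zeta_i \in \mathcal X$, the formula \eqref{xpp1} and the convergence $\nabla u_n \to \nabla u$ in $L^2(D)$ give $\theta_{n,i} \to \theta_i := \sum_j q_{ij}\int_D \nabla u \cdot \nabla \zeta_j\, dx$. Hence $v_n = u_n - \sum_i \theta_{n,i}\zeta_i$ converges in $H^1(D)$ to $v := u - \sum_i \theta_i \zeta_i$, and since each $v_n \in H^1_0(D)$ and $H^1_0(D)$ is closed, $v \in H^1_0(D)$. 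Thus $u = v + \sum_i \theta_i \zeta_i \in \mathcal X$. Finally, $\mathcal X$ is a closed linear subspace, hence convex, so it is weakly closed by Mazur's theorem.

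I do not anticipate a genuine obstacle; the only delicate point is the integration by parts in (i), where one must be sure the boundary term truly vanishes. This relies on both $v \in H^1_0(D)$ (zero trace) and the harmonicity of $\zeta_j$, both of which are available, and everything else reduces to the elementary matrix identity $\sum_j q_{ij}p_{jk} = \delta_{ik}$ and the closedness of $H^1_0(D)$.
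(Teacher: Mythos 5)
Your proof is correct and takes essentially the same approach as the paper: the key orthogonality $\int_D\nabla v\cdot\nabla\zeta_j\,dx=0$ for $v\in H^1_0(D)$, the matrix identity $\sum_j q_{ij}p_{jk}=\delta_{ik}$ for part (i), and for part (ii) convergence of the coefficients via \eqref{xpp1} followed by closedness of $H^1_0(D)$ and convexity (Mazur) for weak closedness. The only cosmetic difference is that the paper justifies the orthogonality by approximating $v$ with $C_c^\infty(D)$ functions rather than invoking the trace theorem directly; both are valid.
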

 \begin{proof}
 We first observe that
 \begin{equation}\label{xpp2022}
\int_{D}\nabla v\cdot\nabla \zeta_{i}dx=0,\quad \forall\,v\in H^1_0(D),\,\,i=1,\cdot\cdot\cdot,N.
\end{equation}
 In fact, choose a sequence $\{v_{n}\}\subset C_{c}^{\infty}(D)$ such that $v_{n}\to v$ in $H^{1}_{0}(D)$ as $n\to+\infty.$ Then by integration by parts,
\begin{equation}\label{xpp2}
\int_{D}\nabla v\cdot\nabla \zeta_{i}dx=\lim_{n\to+\infty}\int_{D}\nabla v_{n}\cdot\nabla \zeta_{i}dx=-\lim_{n\to+\infty}\int_{D} v_{n}\Delta \zeta_{i}dx=0.
\end{equation}

Now we prove (i).
Using \eqref{xpp2022}, we have that
\begin{equation}\label{xpp3}
\begin{split}
\int_{D}\nabla u\cdot\nabla\zeta_{i}dx&=\int_{D}\nabla v\cdot\nabla\zeta_{i}dx+\sum_{j=1}^{N}\left(\theta_{j}\int_{D}\nabla\zeta_{i}\cdot\nabla\zeta_{j} dx\right)\\
&=\sum_{j=1}^{N}\left(\theta_{j}\int_{D}\nabla\zeta_{i}\cdot\nabla\zeta_{j} dx\right)\\
&=\sum_{j=1}^{N}p_{ij}\theta_{j}.
\end{split}
\end{equation}
 Taking into account the fact that $(q_{ij})=(p_{ij})^{-1}$, we get \eqref{xpp1}.

To prove (ii), it suffices to show that for any sequence $\{u_{n}\}\subset \mathcal X,$ if $u_{n}\to u$  in $H^{1}(D)$ for some $u\in H^1(D)$, then   $u\in \mathcal X.$
To this end, we write
 \[u_{n}=v_{n}+\sum_{i=1}^{N}\theta_{n,i}\zeta_{i},\quad v_{n}\in H^{1}_{0}(D),\,\,\theta_{n,1},\cdot\cdot\cdot,\theta_{n,N}\in \mathbb R.\]
 By \eqref{xpp1}, we have that as $n\to+\infty$
 \begin{equation*}\label{xpp4}
 \theta_{n,i}=\sum_{j=1}^{N}\left(q_{ij}\int_{D}\nabla u_{n}\cdot\nabla\zeta_{j}dx\right)\to \sum_{j=1}^{N}\left(q_{ij}\int_{D}\nabla u\cdot\nabla\zeta_{j}dx\right) \quad\mbox{in $H^1(D)$}.
 \end{equation*}
Hence
\[\sum_{i=1}^{N}\theta_{n,i}\zeta_{i}\to\sum_{j=1}^{N}\left(q_{ij}\int_{D}\nabla u\cdot\nabla\zeta_{j}dx\right)\zeta_{i} \quad\mbox{in $C^{1}(\bar D)$ as $n\to+\infty$},\]
which implies that
 \[v_{n}=u_{n}-\sum_{i=1}^{N}\theta_{n,i}\zeta_{i}\to u-\sum_{j=1}^{N}\left(q_{ij}\int_{D}\nabla u\cdot\nabla\zeta_{j}dx\right)\zeta_{i}\quad\mbox{ in $H^{1}(D)$ as $n\to+\infty$}. \]
 Since $H^{1}_{0}(D)$ is  closed in $H^{1}(D)$, we infer that
\begin{equation}\label{7892}
u-\sum_{j=1}^{N}\left(q_{ij}\int_{D}\nabla u\cdot\nabla\zeta_{j}dx\right)\zeta_{i} \in H^{1}_{0}(D).
\end{equation}
Hence 
\[u=\left[u-\sum_{j=1}^{N}\left(q_{ij}\int_{D}\nabla u\cdot\nabla\zeta_{j}dx\right)\zeta_{i}\right] +\sum_{j=1}^{N}\left(q_{ij}\int_{D}\nabla u\cdot\nabla\zeta_{j}dx\right)\zeta_{i}\in\mathcal X.\]
Finally, since $\mathcal X$ is convex, we see that $\mathcal X$ is also weakly closed in $H^{1}(D)$.
  \end{proof}

   \begin{lemma}\label{propb2}
For the space $\mathcal Y$, the following assertions hold:
  \begin{itemize}
  \item[(i)] For any $\phi\in L^{2}(D)$ and $i=1,\cdot\cdot\cdot,N,$
 \begin{equation}\label{yp00}
 \int_{D}\nabla\zeta_{i}\cdot\nabla\mathcal P\phi dx=\int_{D}\zeta_{i}\phi dx=-\int_{\Gamma_{i}}\nabla\mathcal G\phi\cdot d\mathbf n.
 \end{equation}
 \item[(ii)]$\mathcal P$ is one-to-one from $L^{2}(D)$ onto $\mathcal Y$, and
 \begin{equation}\label{pcondel}
 \mathcal P^{-1}=-\Delta.
 \end{equation}
\item[(iii)]
For any $\phi\in L^{2}(D),$
\begin{equation}\label{yp301}
\int_{D}\phi\mathcal P\phi dx=\int_{D}|\nabla\mathcal P\phi|^{2}dx.
\end{equation}
As a consequence, for any $u\in\mathcal Y,$
\begin{equation}\label{ypx99}
\int_{D}-u\Delta u dx=\int_{D}|\nabla u|^{2}dx
\end{equation}

\end{itemize}
 \end{lemma}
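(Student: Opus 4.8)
The plan is to prove the three items in order, since (ii) and (iii) both rest on (i). For (i), I would expand $\mathcal P\phi$ according to its definition and split $\int_D\nabla\zeta_i\cdot\nabla\mathcal P\phi\,dx$ into the contribution of $\mathcal G\phi$ and the contribution of the finite-rank correction $\sum_{k,l}q_{kl}(\int_D\zeta_k\phi\,dx)\zeta_l$. The first contribution vanishes after one integration by parts, since $\zeta_i$ is harmonic and $\mathcal G\phi=0$ on $\partial D$. The second contribution equals $\sum_{k,l}q_{kl}p_{il}\int_D\zeta_k\phi\,dx$, and the matrix identity $\sum_l p_{il}q_{kl}=\delta_{ik}$ (valid because $(q_{ij})=(p_{ij})^{-1}$ and both are symmetric) collapses this to $\int_D\zeta_i\phi\,dx$, giving the first equality. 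For the second equality I would write $\phi=-\Delta\mathcal G\phi$ and apply Green's second identity to $\zeta_i$ and $\mathcal G\phi$: since $\zeta_i$ is harmonic and $\mathcal G\phi$ vanishes on $\partial D$, the only surviving term is $-\int_{\partial D}\zeta_i\nabla\mathcal G\phi\cdot d\mathbf n$, which equals $-\int_{\Gamma_i}\nabla\mathcal G\phi\cdot d\mathbf n$ because $\zeta_i=\delta_{ij}$ on $\Gamma_j$.

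For (ii) I would first record the pointwise identity $\Delta\mathcal P\phi=-\phi$, which follows from $-\Delta\mathcal G\phi=\phi$ and the harmonicity of each $\zeta_l$; this immediately gives $(-\Delta)\circ\mathcal P=\mathrm{Id}$ on $L^2(D)$, hence $\mathcal P$ is injective and $-\Delta$ is a left inverse. The membership $\mathcal P\phi\in\mathcal X\cap H^2(D)$ is clear from the definition, so the only nontrivial part of $\mathcal P\phi\in\mathcal Y$ is the circulation condition: writing $\int_{\Gamma_i}\nabla\mathcal P\phi\cdot d\mathbf n=\int_{\partial D}\zeta_i\nabla\mathcal P\phi\cdot d\mathbf n$ and applying Stokes' theorem turns this into $\int_D\nabla\zeta_i\cdot\nabla\mathcal P\phi\,dx+\int_D\zeta_i\Delta\mathcal P\phi\,dx$, which is $\int_D\zeta_i\phi\,dx-\int_D\zeta_i\phi\,dx=0$ by (i) and $\Delta\mathcal P\phi=-\phi$.

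The crux of (ii) is surjectivity onto $\mathcal Y$. Given $u\in\mathcal Y$, I would set $w=u-\mathcal P(-\Delta u)$; then $-\Delta w=0$ and $w\in\mathcal Y$ as a difference of two elements of $\mathcal Y$. Since $w$ is harmonic and takes a constant value $c_i$ on each $\Gamma_i$ (with $c_0=0$), uniqueness of the harmonic extension forces $w=\sum_{i=1}^N c_i\zeta_i$. Feeding this into the circulation conditions $\int_{\Gamma_j}\nabla w\cdot d\mathbf n=0$ and using $\int_{\Gamma_j}\nabla\zeta_i\cdot d\mathbf n=p_{ij}$ (again by Stokes and harmonicity) yields $\sum_i c_i p_{ij}=0$ for every $j$; since $(p_{ij})$ is positive definite, all $c_i=0$, so $w=0$ and $u=\mathcal P(-\Delta u)$. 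This step --- extracting $w=0$ from the homogeneous circulation constraints via positive definiteness of $(p_{ij})$ --- is the main obstacle, as it is exactly where the inner-boundary structure enters; the rest is bookkeeping with Green's identities.

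Finally, (iii) follows by writing $\phi=-\Delta\mathcal P\phi$ and integrating by parts: $\int_D\phi\mathcal P\phi\,dx=\int_D|\nabla\mathcal P\phi|^2\,dx-\int_{\partial D}\mathcal P\phi\,\nabla\mathcal P\phi\cdot d\mathbf n$. The boundary integral vanishes because $\mathcal P\phi\in\mathcal Y$ is constant on each $\Gamma_i$ (and zero on $\Gamma_0$) while $\int_{\Gamma_i}\nabla\mathcal P\phi\cdot d\mathbf n=0$, so it factors as $\sum_{i=0}^N(\mathcal P\phi|_{\Gamma_i})\int_{\Gamma_i}\nabla\mathcal P\phi\cdot d\mathbf n=0$. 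The stated consequence for $u\in\mathcal Y$ is then immediate on substituting $\phi=-\Delta u$ and $\mathcal P\phi=u$ from (ii), which reduces it to the same integration by parts with the boundary term killed by the $\mathcal Y$ conditions.
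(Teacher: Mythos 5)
Your proposal is correct, and items (i) and (iii) follow essentially the same computations as the paper (expansion of $\mathcal P\phi$ plus the matrix identity $\sum_l p_{il}q_{kl}=\delta_{ik}$ for (i); integration by parts with the boundary term killed by constancy of traces and vanishing circulations for (iii)). The genuine difference is in the surjectivity step of (ii). The paper proves $\mathcal P(-\Delta u)=u$ for $u\in\mathcal Y$ by a direct computation: it writes $u=v+\sum_i\theta_i\zeta_i$, invokes the coefficient formula $\theta_i=\sum_j q_{ij}\int_D\nabla u\cdot\nabla\zeta_j\,dx$ from Lemma \ref{propb1}(i), uses $\mathcal G(-\Delta v)=v$, and then shows via Stokes' theorem that $\int_D\zeta_j(-\Delta u)\,dx=\int_D\nabla u\cdot\nabla\zeta_j\,dx$, so the two expressions match term by term. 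You instead run a kernel argument: the difference $w=u-\mathcal P(-\Delta u)$ is harmonic and lies in $\mathcal Y$, hence by uniqueness of the Dirichlet problem $w=\sum_i c_i\zeta_i$, and the zero-circulation constraints together with $\int_{\Gamma_j}\nabla\zeta_i\cdot d\mathbf n=p_{ij}$ force $\sum_i c_ip_{ij}=0$, so $c=0$ by invertibility of $(p_{ij})$. Your route avoids any reliance on the explicit coefficient formula of Lemma \ref{propb1}(i) and isolates the structural fact that the only harmonic element of $\mathcal Y$ is zero (i.e., it identifies the kernel of $-\Delta$ on $\mathcal Y$), which is arguably more conceptual; the paper's computation is more self-contained in that it simultaneously re-derives the needed identities and never appeals to uniqueness for the Dirichlet problem. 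Both hinge on the same two ingredients --- the matrix $(p_{ij})$ and its inverse, and Stokes' theorem on the inner boundaries --- so the difference is one of organization rather than of substance, but your version would survive unchanged even without Lemma \ref{propb1}(i).
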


  \begin{proof}
First we prove (i). By a density argument (recall item (i) of Lemma \ref{eiyoyo}), it is enough to prove \eqref{yp00} for any $\phi\in C_{c}^{\infty}(D).$ Fix $1\leq i\leq N.$ By a direct computation, we have that
 \begin{align*}
 \int_{D}\nabla\zeta_{i}\cdot\nabla\mathcal P\phi dx&=\int_{D}\nabla\zeta_{i}\cdot\nabla\left(\mathcal G\phi+\sum_{j,k=1}^{N} q_{jk}\int_{D}\zeta_{j}\phi dx\zeta_{k} \right)dx\\
 &=\sum_{j,k=1}^{N} \left(q_{jk}\int_{D}\nabla\zeta_{i}\cdot\nabla  \zeta_{k} dx\int_{D}\zeta_{j}\phi dx\right)\\
 &=\sum_{j,k=1}^{N} \left(q_{jk}p_{ik}\int_{D}\zeta_{j}\phi dx\right)\\
 &=\int_{D}\zeta_{i}\phi dx.
 \end{align*}
Here we used the fact that $\int_{D}\nabla\zeta_{i}\cdot\nabla\mathcal G\phi dx=0$ (see \eqref{xpp2022}). Hence the first equality in \eqref{yp00} has been proved.
For the second equality, by the Stokes theorem,
 \begin{align*}
\int_{D}\zeta_{i}\phi dx&=\int_{D}\zeta(-\Delta\mathcal G\phi)dx\\
&=-\int_{D}\nabla\cdot(\zeta_{i}\nabla\mathcal G\phi)dx+\int_{D}\nabla\zeta_{i}\cdot\nabla\mathcal G\phi dx\\
&=-\int_{\Gamma_{i}}\nabla\mathcal G\phi\cdot d\mathbf n.
 \end{align*}
Note that in the last equality we used  $\int_{D}\nabla\zeta_{i}\cdot\nabla\mathcal G\phi dx=0$ again.

Next we prove (ii).
We  first show that   $\mathcal P\phi\in\mathcal Y$ for any $\phi\in L^{2}(D).$ It is clear  that $\mathcal P\phi\in \mathcal X\cap H^{2}(D).$ Below we show that
 \begin{equation}\label{yp0}
 \int_{\Gamma_{i}}\nabla\mathcal P\phi\cdot d\mathbf n=0,\quad i=1,\cdot\cdot\cdot,N.
 \end{equation}
 Fix $1\leq i\leq N$. Again, by a density argument, it suffices to verify \eqref{yp0} for any $\phi\in C_{c}^{\infty}(D)$. By the Stokes theorem, we compute as follows:
  \begin{equation*}
 \int_{\Gamma_{i}}\nabla\mathcal P\phi\cdot d\mathbf n=\int_{\partial D}\zeta_{i}\nabla\mathcal P\phi\cdot d\mathbf n=\int_{D}\nabla\cdot(\zeta_{i}\nabla\mathcal P\phi)dx=\int_{D}\nabla\zeta_{i}\cdot\nabla\mathcal P\phi dx-\int_{D}\zeta_{i}\phi dx=0.
  \end{equation*}
 Here we used the obvious fact that $-\Delta(\mathcal P\phi)=\phi$ and the first equality in \eqref{yp00}.
To proceed, notice  that
 $-\Delta(\mathcal P\phi)=\phi$ for all $\phi\in L^{2}(D)$. Therefore, to prove \eqref{pcondel}, it is sufficient to show that
  \begin{equation}\label{yp200}
  \mathcal P(-\Delta u)=u,\quad\forall\,u\in\mathcal Y.
 \end{equation}
 To this end, for any $u\in\mathcal Y$ we write
  \begin{equation}\label{yp201}
  u=v+\sum_{i=1}^{N}\theta_{i}\zeta_{i},\quad v\in H^{1}_{0}(D),\quad \theta_{i}=\sum_{j=1}^{N}\left(q_{ij}\int_{D}\nabla u\cdot\nabla\zeta_{j}dx\right).
  \end{equation}
Here we used   \eqref{xpp1}. Then
 \begin{equation}\label{yp202}
 \begin{split}
 \mathcal P(-\Delta u)&=\mathcal G(-\Delta v)+\sum_{i,j=1}^{N}\left(q_{ij}\int_{D}\zeta_{j}(-\Delta u)dx\zeta_{i}\right)\\
 &=v+\sum_{i,j=1}^{N}\left(q_{ij}\int_{D}\zeta_{j}(-\Delta u)dx\zeta_{i}\right).
 \end{split}
 \end{equation}
Fix $1\leq j\leq N$. By the Stokes theorem, we have that
 \begin{equation}\label{yp203}
 \begin{split}
 \int_{D}\zeta_{j}(-\Delta u)dx&=-\int_{D}\nabla\cdot(\zeta_{j}\nabla u)dx+\int_{D}\nabla u\cdot\nabla\zeta_{j}dx\\
 &=-\int_{\partial D} \zeta_{j}\nabla u\cdot d\mathbf n+\int_{D}\nabla u\cdot\nabla\zeta_{j}dx\\
 &=-\sum_{k=0}^{N}\int_{\Gamma_{k}} \zeta_{j}\nabla u\cdot d\mathbf n+\int_{D}\nabla u\cdot\nabla\zeta_{j}dx\\
 &=\int_{D}\nabla u\cdot\nabla\zeta_{j}dx.
 \end{split}
 \end{equation}
 Note that we have used the following facts in the last equality  \[\zeta_{j}=0\quad\mbox{on }\Gamma_{0},\quad \int_{\Gamma_{k}}\nabla u\cdot d\mathbf n=0\quad\mbox{for any $1\leq k\leq N$ (since $u\in\mathcal Y$)}.
 \] Combining \eqref{yp201}, \eqref{yp202} and \eqref{yp203}, the claim \eqref{yp200} is proved.

 Finally we prove (iii). Notice that \eqref{yp301} and \eqref{ypx99} are in fact equivalent by item (ii). Below
we prove \eqref{yp301} for any $\phi\in C_c^\infty(D)$. By the Stokes theorem, we compute
 \begin{align*}
 \int_{D}|\nabla\mathcal P\phi|^{2}dx&=\int_{D}\nabla\cdot(\mathcal P\phi\nabla \mathcal P\phi)dx-\int_{D}\mathcal P\phi \Delta(\mathcal P\phi)dx\\
 &=\int_{\partial D}\mathcal P\phi\nabla \mathcal P\phi\cdot d\mathbf n+\int_{D}\phi\mathcal P\phi dx\\
 &=\sum_{i,j=1}^{N}\left(q_{ij}\int_{D}\phi \zeta_{i}dx\int_{\partial D}\zeta_{j}\nabla\mathcal P\phi \cdot d\mathbf n\right)+\int_{D}\phi\mathcal P\phi dx\\
 &=\sum_{i,j=1}^{N}\left(q_{ij}\int_{D}\phi \zeta_{i}dx\int_{\Gamma_{j}}\nabla\mathcal P\phi \cdot d\mathbf n\right)+\int_{D}\phi\mathcal P\phi dx\\
 &=\int_{D}\phi\mathcal P\phi dx.
 \end{align*}
Note that in the last equality we used the fact that $\int_{\Gamma_{j}}\nabla\mathcal P\phi \cdot d\mathbf n=0$ for any $1\leq j\leq N$ (since $\mathcal P\phi\in\mathcal Y$ by item (ii)). Hence \eqref{yp301} is proved.

 \end{proof}

 \section{Existence and uniqueness of an elliptic problem}\label{apdx3}

Recall the notation (N1)-(N5) in Section 1.
   \begin{proposition}\label{propc2}
    Let $D$ be a multiply-connected bounded domain of the form \eqref{dform}. Let $\omega\in L^2(D)$ and $\mathbf a\in \mathbb R^N$ be given.
    Consider the following elliptic problem:
 \begin{equation}\label{elsv}
  \begin{cases}
-\Delta\psi=\omega  &\mbox{in } D,\\
\psi=0 &\mbox{on } \Gamma_{0},\\
\psi \mbox{ is constant on }\Gamma_{i},& i=1,\cdot\cdot\cdot,N,\\
\int_{\Gamma_{i}}\nabla \psi\cdot d \mathbf n=-a_{i},& i=1,\cdot\cdot\cdot,N.
\end{cases}
 \end{equation}
  Then there is a unique $\psi\in H^2(D)$ solving \eqref{elsv}, given by
  \[\psi=\mathcal P\omega+h_{\mathbf a}.\]

  \end{proposition}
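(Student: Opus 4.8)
The plan is to prove existence by directly verifying that the explicit candidate $\psi=\mathcal P\omega+h_{\mathbf a}$ satisfies each of the four conditions in \eqref{elsv}, and then to prove uniqueness by a standard energy identity. First I would note that $\psi\in H^2(D)$, since $\mathcal G\omega\in H^2(D)$ by elliptic regularity (as $\omega\in L^2(D)$) and each $\zeta_j\in C^\infty(\bar D)$. The equation $-\Delta\psi=\omega$ is then immediate because $\Delta\zeta_j=0$ by \eqref{zet1n}. The boundary behaviour follows from $\mathcal G\omega=0$ on $\partial D$ together with the normalization $\zeta_j|_{\Gamma_k}=\delta_{jk}$ (with $\zeta_j|_{\Gamma_0}=0$): this gives $\psi=0$ on $\Gamma_0$ and $\psi\equiv\text{const}$ on each $\Gamma_i$.

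The only substantial point is the circulation condition. Here I would first establish, via the Stokes theorem applied to $\nabla\cdot(\zeta_i\nabla\zeta_j)$, the identity
\[\int_{\Gamma_i}\nabla\zeta_j\cdot d\mathbf{n}=\int_{\partial D}\zeta_i\,\nabla\zeta_j\cdot d\mathbf{n}=\int_D\nabla\zeta_i\cdot\nabla\zeta_j\,dx=p_{ij},\]
where $\zeta_i|_{\Gamma_k}=\delta_{ik}$ is used to collapse the sum over boundary components and $\Delta\zeta_j=0$ to discard the remaining volume term; recall $p_{ij}$ from \eqref{zet2n}. Combining this with the flux identity $\int_{\Gamma_i}\nabla\mathcal G\omega\cdot d\mathbf{n}=-\int_D\zeta_i\omega\,dx$ supplied by Lemma \ref{propb2}(i), a short computation using $(q_{ij})=(p_{ij})^{-1}$ and the symmetry of both matrices yields $\int_{\Gamma_i}\nabla\psi\cdot d\mathbf{n}=-a_i$: the $\mathcal G\omega$-flux cancels against the $\mathcal P$-correction term through the relation $\sum_j q_{kj}p_{ij}=\delta_{ik}$, while the $h_{\mathbf a}$-term (recall $h_{\mathbf a}=-\sum_{i,j}q_{ij}a_i\zeta_j$ from \eqref{biosaa0}) contributes exactly $-a_i$. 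Alternatively, Lemma \ref{propb2}(ii) already gives $\mathcal P\omega\in\mathcal Y$, hence $\int_{\Gamma_i}\nabla\mathcal P\omega\cdot d\mathbf{n}=0$, so only the elementary $h_{\mathbf a}$-contribution remains to be computed; this is non-circular since Lemma \ref{propb2} is proved independently of this proposition.

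For uniqueness, suppose $\psi_1,\psi_2\in H^2(D)$ both solve \eqref{elsv}, and set $w=\psi_1-\psi_2$. Then $w$ is harmonic, $w=0$ on $\Gamma_0$, $w$ equals a constant on each $\Gamma_i$, and $\int_{\Gamma_i}\nabla w\cdot d\mathbf{n}=0$. Integrating by parts,
\[\int_D|\nabla w|^2\,dx=\int_{\partial D}w\,\nabla w\cdot d\mathbf{n}-\int_D w\,\Delta w\,dx=\sum_{k=0}^N w|_{\Gamma_k}\int_{\Gamma_k}\nabla w\cdot d\mathbf{n},\]
and every term vanishes: the factor $w|_{\Gamma_0}=0$ kills $k=0$, and the zero-circulation condition kills each $k\geq 1$. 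Hence $\nabla w\equiv 0$, so $w$ is constant, and $w=0$ on $\Gamma_0$ forces $w\equiv 0$, giving $\psi_1=\psi_2$.

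I expect the main obstacle to be the bookkeeping in the circulation identity: it requires assembling the Stokes-theorem evaluation of $\int_{\Gamma_i}\nabla\zeta_j\cdot d\mathbf{n}$, the Green-operator flux identity, and the cancellations coming from $(p_{ij})(q_{ij})=I$ together with the symmetry of these matrices. Everything else is routine once the normalization $\zeta_j|_{\Gamma_k}=\delta_{jk}$ and the mapping and boundary properties of $\mathcal G$ are in hand.
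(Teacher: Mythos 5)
Your proposal is correct and follows essentially the same route as the paper: existence by direct verification of the candidate $\psi=\mathcal P\omega+h_{\mathbf a}$, with the circulation condition handled exactly as in the paper (Stokes theorem giving $\int_{\Gamma_i}\nabla\zeta_k\cdot d\mathbf n=p_{ik}$, the flux identity of Lemma \ref{propb2}(i), and the cancellation via $(q_{ij})=(p_{ij})^{-1}$), and uniqueness by the same energy identity, which you simply carry out inline instead of citing \eqref{ypx99}. Your alternative shortcut via Lemma \ref{propb2}(ii) (so that $\mathcal P\omega\in\mathcal Y$ makes its circulations vanish, leaving only the $h_{\mathbf a}$ contribution) is also valid and non-circular, but it is a cosmetic rather than substantive departure.
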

  \begin{proof}

We first prove uniqueness. Suppose that \eqref{elsv} has two solutions, say  $\psi_{1}$ and $\psi_{2}. $ It is clear that
\[\psi_{1}-\psi_{2}\in \mathcal Y,\quad\Delta(\psi_{1}-\psi_{2})=0.\]
 Applying \eqref{ypx99} we have
 \[ \int_{D}|\nabla (\psi_1-\psi_2)|^{2}dx=0,\]
which implies that $\psi_1-\psi_2$ is constant in $D$ (since $D$ is connected).  Taking into account the fact that both $\psi_1$ and $\psi_2$ vanish on the outer boundary component $\Gamma_0$, we get $\psi_1\equiv \psi_2.$

Now we complete the proof  by showing that $\psi=\mathcal P\omega+h_{\mathbf a}$ solves \eqref{elsv}. The only thing we need to verify is that
   \begin{equation}\label{ontves}
   \int_{\Gamma_{i}}\nabla(\mathcal P\omega+h_{\mathbf a})\cdot d\mathbf n=-a_{i}.
 \end{equation}
This follows from the following direct computation:
    \begin{align*}
   \int_{\Gamma_{i}}\nabla(\mathcal P\omega+h_{\mathbf a})\cdot d\mathbf n
   &=\int_{\Gamma_{i}}\nabla \mathcal G\omega\cdot d\mathbf n+\sum_{j,k=1}^{N}q_{jk}\left(\int_{D}\zeta_{j}\omega dx- a_{j} \right)\int_{\Gamma_{i}}\nabla\zeta_{k}\cdot d\mathbf n\\
   &=\int_{\Gamma_{i}}\nabla \mathcal G\omega\cdot d\mathbf n+\sum_{j,k=1}^{N}q_{jk}\left(\int_{D}\zeta_{j}\omega dx- a_{j} \right)\int_{\partial D}\zeta_{i}\nabla\zeta_{k}\cdot d\mathbf n\\
   &=\int_{\Gamma_{i}}\nabla \mathcal G\omega\cdot d\mathbf n+\sum_{j,k=1}^{N}q_{jk}\left(\int_{D}\zeta_{j}\omega dx- a_{j} \right)\int_{  D}\nabla\zeta_{i}\cdot\nabla\zeta_{k}dx\\
   &=\int_{\Gamma_{i}}\nabla \mathcal G\omega\cdot d\mathbf n+\sum_{j,k=1}^{N}p_{ik}q_{jk}\left(\int_{D}\zeta_{j}\omega dx- a_{j} \right)\\
   &=\int_{\Gamma_{i}}\nabla \mathcal G\omega\cdot d\mathbf n+ \int_{D}\zeta_{i}\omega dx- a_{i} \\
   &=-a_{i}.
 \end{align*}
Note that in the last equality we used the following fact
 \[\int_{\Gamma_{i}}\nabla \mathcal G\omega\cdot d\mathbf n=-\int_{D}\zeta_{i}\omega dx,\]
 which is due to the  second equality in \eqref{yp00}.

 \end{proof}

{\bf Acknowledgements:}
{G. Wang was supported by National Natural Science Foundation of China (12001135, 12071098) and China Postdoctoral Science Foundation (2019M661261, 2021T140163). B. Zuo was supported by National Natural Science Foundation of China (12101154).}

\phantom{s}
 \thispagestyle{empty}

\end{document}